\begin{document}
\numberwithin{equation}{section}

\def\1#1{\overline{#1}}
\def\2#1{\widetilde{#1}}
\def\3#1{\widehat{#1}}
\def\4#1{\mathbb{#1}}
\def\5#1{\frak{#1}}
\def\6#1{{\mathcal{#1}}}

\newcommand{\de}{\partial}
\newcommand{\R}{\mathbb R}
\newcommand{\Ha}{\mathbb H}
\newcommand{\al}{\alpha}
\newcommand{\tr}{\widetilde{\rho}}
\newcommand{\tz}{\widetilde{\zeta}}
\newcommand{\tk}{\widetilde{C}}
\newcommand{\tv}{\widetilde{\varphi}}
\newcommand{\hv}{\hat{\varphi}}
\newcommand{\tu}{\tilde{u}}
\newcommand{\tF}{\tilde{F}}
\newcommand{\debar}{\overline{\de}}
\newcommand{\Z}{\mathbb Z}
\newcommand{\C}{\mathbb C}
\newcommand{\Po}{\mathbb P}
\newcommand{\zbar}{\overline{z}}
\newcommand{\G}{\mathcal{G}}
\newcommand{\So}{\mathcal{S}}
\newcommand{\Ko}{\mathcal{K}}
\newcommand{\U}{\mathcal{U}}
\newcommand{\B}{\mathbb B}
\newcommand{\oB}{\overline{\mathbb B}}
\newcommand{\Cur}{\mathcal D}
\newcommand{\Dis}{\mathcal Dis}
\newcommand{\Levi}{\mathcal L}
\newcommand{\SP}{\mathcal SP}
\newcommand{\Sp}{\mathcal Q}
\newcommand{\A}{\mathcal O^{k+\alpha}(\overline{\mathbb D},\C^n)}
\newcommand{\CA}{\mathcal C^{k+\alpha}(\de{\mathbb D},\C^n)}
\newcommand{\Ma}{\mathcal M}
\newcommand{\Ac}{\mathcal O^{k+\alpha}(\overline{\mathbb D},\C^{n}\times\C^{n-1})}
\newcommand{\Acc}{\mathcal O^{k-1+\alpha}(\overline{\mathbb D},\C)}
\newcommand{\Acr}{\mathcal O^{k+\alpha}(\overline{\mathbb D},\R^{n})}
\newcommand{\Co}{\mathcal C}
\newcommand{\Hol}{{\sf Hol}(\mathbb H, \mathbb C)}
\newcommand{\Aut}{{\sf Aut}(\mathbb D)}
\newcommand{\D}{\mathbb D}
\newcommand{\oD}{\overline{\mathbb D}}
\newcommand{\oX}{\overline{X}}
\newcommand{\loc}{L^1_{\rm{loc}}}
\newcommand{\la}{\langle}
\newcommand{\ra}{\rangle}
\newcommand{\thh}{\tilde{h}}
\newcommand{\N}{\mathbb N}
\newcommand{\kd}{\kappa_D}
\newcommand{\Hr}{\mathbb H}
\newcommand{\ps}{{\sf Psh}}
\newcommand{\Hess}{{\sf Hess}}
\newcommand{\subh}{{\sf subh}}
\newcommand{\harm}{{\sf harm}}
\newcommand{\ph}{{\sf Ph}}
\newcommand{\tl}{\tilde{\lambda}}
\newcommand{\gdot}{\stackrel{\cdot}{g}}
\newcommand{\gddot}{\stackrel{\cdot\cdot}{g}}
\newcommand{\fdot}{\stackrel{\cdot}{f}}
\newcommand{\fddot}{\stackrel{\cdot\cdot}{f}}
\def\v{\varphi}
\def\Re{{\sf Re}\,}
\def\Im{{\sf Im}\,}


\def\Label#1{\label{#1}}


\def\cn{{\C^n}}
\def\cnn{{\C^{n'}}}
\def\ocn{\2{\C^n}}
\def\ocnn{\2{\C^{n'}}}
\def\je{{\6J}}
\def\jep{{\6J}_{p,p'}}
\def\th{\tilde{h}}


\def\dist{{\rm dist}}
\def\const{{\rm const}}
\def\rk{{\rm rank\,}}
\def\id{{\sf id}}
\def\aut{{\sf aut}}
\def\Aut{{\sf Aut}}
\def\CR{{\rm CR}}
\def\GL{{\sf GL}}
\def\Re{{\sf Re}\,}
\def\Im{{\sf Im}\,}
\def\U{{\sf U}}

\def\la{\langle}
\def\ra{\rangle}

\emergencystretch15pt \frenchspacing

\newtheorem{theorem}{Theorem}[section]
\newtheorem{lemma}[theorem]{Lemma}
\newtheorem{proposition}[theorem]{Proposition}
\newtheorem{corollary}[theorem]{Corollary}

\theoremstyle{definition}
\newtheorem{definition}[theorem]{Definition}
\newtheorem{example}[theorem]{Example}

\theoremstyle{remark}
\newtheorem{remark}[theorem]{Remark}
\numberwithin{equation}{section}

\title[Evolution families I:  the unit disc]{Evolution Families and the Loewner Equation I: the unit disc}
\author[F. Bracci]{Filippo Bracci}
\address{Dipartimento Di Matematica, Universit\`{a} Di Roma \textquotedblleft Tor
Vergata\textquotedblright, Via Della Ricerca Scientifica 1, 00133, Roma,
Italy. }
\email{fbracci@mat.uniroma2.it}
\author[M. D. Contreras]{Manuel D. Contreras}

\author[S. D\'{\i}az-Madrigal]{Santiago D\'{\i}az-Madrigal}
\address{Camino de los Descubrimientos, s/n\\
Departamento de Matem\'{a}tica Aplicada II\\
Escuela T\'{e}cnica Superior de Ingenieros\\
Universidad de Sevilla\\
Sevilla, 41092\\
Spain.}\email{contreras@us.es} \email{madrigal@us.es}
\date{\today }
\subjclass[2000]{Primary 30C80; Secondary 34M15, 30D05}

\keywords{Loewner equations; non-autonomous vector fields;
iteration theory; evolution families}

\thanks{$^\dag$Partially supported by the \textit{Ministerio
de Ciencia y Innovaci\'on} and the European Union (FEDER),
project MTM2006-14449-C02-01, by \textit{La Consejer\'{\i}a de
Educaci\'{o}n y Ciencia de la Junta de Andaluc\'{\i}a} and by
the European Science Foundation Research Networking Programme
HCAA}

\begin{abstract}  In this paper we introduce a general version
of the Loewner differential equation which allows us to present
a new and unified treatment of both the radial equation
introduced in 1923 by K. Loewner and the chordal equation
introduced in 2000 by O. Schramm. In particular, we prove that
evolution families  in the unit disc are in one to one
correspondence with solutions to this new type of Loewner
equations. Also, we give a Berkson-Porta type formula for
non-autonomous weak holomorphic vector fields which generate
such Loewner differential equations and  study in detail
geometric and dynamical properties of evolution families.
\end{abstract}

\maketitle

\tableofcontents

\section{Introduction}

In 1923, Loewner \cite{Loewner} developed a machinery to
``embed'' a slit domain of the complex plane into a family of
domains endowed with a certain order. The key idea was to
represent such domains by means of a family (nowadays known as
a {\sl Loewner chain})  of univalent functions defined on the
unit disc and satisfying a suitable differential equation. Such
a machinery was then studied and extended to other types  of
simply connected domains  by  Kufarev in 1943 and Pommerenke in
1965 (see, {\sl e.g.}, \cite{Duren}, \cite{Pommerenke}, and
\cite{Rosenblum-Rovnyak}).

Since the original paper of Loewner, this method has shown to
be extremely useful when dealing with many different problems,
especially those having some character of extremality. In fact,
in 1984 de Branges used (extensions of) Loewner's theory  to
solve the Bieberbach conjecture.

The classical {\sl radial Loewner equation} in the unit disc
$\D:=\{\zeta\in \C: |\zeta|<1\}$ is the following
non-autonomous differential equation
\begin{equation}\label{loewnervero}
\begin{cases}
\overset{\bullet}{w}  =G(w,t)& \quad\text{for almost every }t\in\lbrack s,\infty)\\
w(s)   =z&
\end{cases}
\end{equation}
where $s\in[0,+\infty)$, $G(w,t)=-wp(w,t)$ with the function
$p:\mathbb{D}\times\lbrack0,+\infty)\rightarrow\mathbb{C}$
measurable in $t$, holomorphic in $z$, $p(0,t)=1$ for all
$t\geq 0$ and $\Re p(z,t)\geq 0$. In fact Loewner himself
studied the case when $p(z,t)=\frac {1+k(t)z}{1-k(t)z}$ for
some continuous function
$k:[0,+\infty)\rightarrow\partial\mathbb{D}$. Write $t\mapsto
\v_{s,t}(z)$ for the solution of such a differential equation.
Then for $0\leq s\leq t<+\infty$ the maps $\v_{s,t}$ are
holomorphic self maps of $\D$ which verify the following
properties:
\begin{enumerate}
\item $\varphi_{s,s}=id_{\mathbb{D}},$

\item $\varphi_{s,t}=\varphi_{u,t}\circ\varphi_{s,u}$ for all $0\leq s\leq
u\leq t<+\infty,$

\item $\varphi_{s,t}(0)=0$ and
$\varphi_{s,t}^{\prime}(0)=e^{s-t}$ for all $0\leq s\leq u\leq
t<+\infty$.
\end{enumerate}

We call such a family  $(\v_{s,t})$  an {\sl evolution family}
of the unit disc (see Definition \ref{def-ev} for a precise
definition).

The  hypotheses  $G(0,t)\equiv 0$ and $p(0,t)\equiv 1$, which
forces the evolution family $(\v_{s,t})$ to fix the origin and
to have normalized first derivatives at $0$, are strongly used
in the construction of the family itself (mainly in proving
semicompleteness and holomorphicity) because they allow to use
distortion theorems.

Until the end of the XX century, there were only few papers
where  equation \eqref{loewnervero} was studied assuming
$G(\tau,t)\equiv 0$ for some $\tau\in \de \D$. We cite the
pioneering works of Goryainov \cite{Gorjainov} and Goryainov
and Ba \cite{Goryainov-Ba}. After that,  Schramm \cite{Schramm}
and Lawler, Schramm and Werner \cite{Lawler-Schramm-Werner-I},
\cite{Lawler-Schramm-Werner-II} proved the Mandelbroit
conjecture using a stochastic version of this chordal Loewner
equation. Also Bauer \cite{Bauer}, Marshall and Rohde
\cite{Marshall-Rohde} and Prokhorov and Vasiliev
\cite{Prokhorov-Vasiliev} studied a similar chordal Loewner
equation. In such a case, $G(w,t)=(1-w)^{2}p(w,t)$ where
$p(w,t)=\frac{1}{g(w)+ih(t)}$ with $g(w)=\frac{1+w}{1-w}$ and
$h:[0,+\infty)\rightarrow\mathbb{R}$ continuous.  Solutions to
such an equation correspond to evolution families $(\v_{s,t})$
with boundary fixed points and are usually stated in the half
plane model.

In this paper we study general evolution families of the unit
disc. Our method allows to treat at the same time evolution
families with inner fixed points and with no interior fixed
points. In particular, we can solve \eqref{loewnervero} in case
of boundary fixed points {\sl without} assuming any particular
form of $G(z,t)$. More in detail, our aim is to completely
characterize evolution families by means of a differential
equation of type \eqref{loewnervero}. The key observation on
which our work is based, is that  in all the previous studied
cases, the function $w\mapsto G(w,t)$ is a semicomplete vector
field for all fixed $t\geq 0$. And in fact we prove that all
evolution families of the unit disc  are in one-to-one
correspondence with weak holomorphic vector fields which are
infinitesimal generators for almost every time (see Section 2
for definitions).

More precisely, we call {\sl Herglotz vector field of order
$d\geq 1$} a function $G:\D\times [0,+\infty)\to \C$ which is a
weak holomorphic vector field of order $d\geq 1$ (in the sense
of Carath\'eodory's theory, see Definition \ref{Definicion-VF})
and for almost every $t\geq 0$ has the property that $z\mapsto
G(z,t)$ is an infinitesimal generator. Also, an evolution
family of the unit disc is said to be {\sl of order $d\geq 1$}
if $|\varphi_{s,u}(z)-\varphi_{s,t}(z)|$ is locally bounded by
a non-negative function whose derivative is in $L^d$ (see
Definition \ref{def-ev})

Our main result is the following:

\begin{theorem}\label{main}
For any evolution family $(\v_{s,t})$ of order $d\geq 1$ in the
unit disc there exists a (essentially) unique  Herglotz vector
field $G(z,t)$ of order $d$ such that for all $z\in \D$
\begin{equation}\label{main-eq}
\frac{\de \v_{s,t}(z)}{\de t}=G(\v_{s,t}(z),t) \quad \hbox{a.e.
$t\in [0,+\infty)$}.
\end{equation}
Conversely, for any Herglotz vector field $G(z,t)$ of order
$d\geq 1$
 in the unit disc
there exists a unique evolution family $(\v_{s,t})$ of order
$d$  such that \eqref{main-eq} is satisfied.
\end{theorem}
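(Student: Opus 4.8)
\emph{The plan} is to establish the two implications separately, using throughout the classical Berkson--Porta theorem: a holomorphic $g\colon\D\to\C$ is an infinitesimal generator if and only if $g(z)=(\tau-z)(1-\overline{\tau}z)p(z)$ for some $\tau\in\oD$ and holomorphic $p$ with $\Re p\ge0$; equivalently, $g$ is semicomplete. For the first implication, let $(\v_{s,t})$ be an evolution family of order $d$. The first task would be a regularity lemma: combining the order-$d$ estimate of Definition~\ref{def-ev} with the Cauchy estimates, one shows that $t\mapsto\v_{s,t}(z)$ is locally absolutely continuous, uniformly on compact subsets of $\D$, with difference quotients dominated by a fixed $L^d_{\mathrm{loc}}$ function; hence $(s,t,z)\mapsto\v_{s,t}(z)$ is jointly continuous, each $\v_{s,t}$ is univalent, and for each fixed $s$ there is $g_s(z,t)$, measurable in $t$, holomorphic in $z$ for a.e. $t$, with $\v_{s,t}(z)=z+\int_s^tg_s(z,\xi)\,d\xi$. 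Differentiating the cocycle relation $\v_{s,t}=\v_{u,t}\circ\v_{s,u}$ in $t$ gives $g_s(z,t)=g_u(\v_{s,u}(z),t)$ for $s\le u\le t$ and a.e. $t$; since the images $\v_{s,t}(\D)$ increase as $s\downarrow$ and exhaust $\D$ (because $\v_{s,t}\to\mathrm{id}$ as $s\to t^-$), this compatibility lets us glue, for a.e. $t$, the locally defined fields into a single holomorphic $G(\cdot,t)$ on $\D$ with $G(\v_{s,t}(z),t)=\partial_t\v_{s,t}(z)$. The regularity lemma then shows $G$ is a weak holomorphic vector field of order $d$ in the sense of Definition~\ref{Definicion-VF}, and it is essentially unique because $G(\cdot,t)$ is holomorphic and already prescribed on the open set $\v_{0,t}(\D)$.

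\emph{The crux} is showing that $G(\cdot,t)$ is an infinitesimal generator for a.e. $t$. I would fix a time $t_0$ which is a Lebesgue point in the sense that $\frac1h\int_{t_0}^{t_0+h}\sup_K|G(\cdot,\xi)-G(\cdot,t_0)|\,d\xi\to0$ as $h\to0^+$ for every compact $K\subset\D$; almost every $t_0$ qualifies, by separability of the space of holomorphic functions on $\D$ and Lebesgue differentiation applied to the $L^d_{\mathrm{loc}}$ majorant. Using the integral equation for $\v$ together with the Cauchy estimates one obtains $\psi_h:=\v_{t_0,t_0+h}=\mathrm{id}+hG(\cdot,t_0)+o(h)$ locally uniformly on $\D$, where each $\psi_h$ is a holomorphic self-map of $\D$. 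Writing the Schwarz--Pick contraction $\rho(\psi_h(z_1),\psi_h(z_2))\le\rho(z_1,z_2)$ for the hyperbolic distance $\rho$ and letting $h\to0^+$ yields, for all $z_1,z_2\in\D$, the inequality $\frac{d}{ds}\big|_{s=0^+}\rho\big(z_1+sG(z_1,t_0),\,z_2+sG(z_2,t_0)\big)\le0$, which is exactly the differential (``infinitesimal'') form of the Schwarz--Pick lemma characterising semicomplete holomorphic vector fields on $\D$. Thus $G(\cdot,t_0)$ is an infinitesimal generator, $G$ is a Herglotz vector field of order $d$, and \eqref{main-eq} holds by construction.

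For the converse, let $G$ be a Herglotz vector field of order $d$. Holomorphicity in $z$ and the $L^1_{\mathrm{loc}}$ majorant make $G$ locally Lipschitz in $z$ with an $L^1_{\mathrm{loc}}$ Lipschitz constant, so Carath\'eodory's existence--uniqueness theorem provides, for each $s\ge0$ and $z\in\D$, a unique non-continuable solution $t\mapsto\v_{s,t}(z)$ of $\dot w=G(w,t)$, $w(s)=z$. The main point is global existence: using the Berkson--Porta representation $G(z,t)=(\tau(t)-z)(1-\overline{\tau(t)}z)p(z,t)$ with $\Re p(\cdot,t)\ge0$, valid for a.e. $t$ by the generator hypothesis, one computes $\frac{d}{dt}|\v_{s,t}(z)|^2=2\Re\big(\overline{\v_{s,t}(z)}\,G(\v_{s,t}(z),t)\big)$ and checks that $\frac{d}{dt}\big(1-|\v_{s,t}(z)|^2\big)\ge-c(t)\big(1-|\v_{s,t}(z)|^2\big)$ for some $c\in L^1_{\mathrm{loc}}$, so $1-|\v_{s,t}(z)|^2$ stays positive by Gronwall and the solution is defined for all $t\ge s$ with $\v_{s,t}(\D)\subseteq\D$. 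Holomorphic dependence on the initial value (standard for Carath\'eodory systems with holomorphic right-hand side) gives $\v_{s,t}\in\mathrm{Hol}(\D,\D)$; $\v_{s,s}=\mathrm{id}$ is immediate; the cocycle law follows from uniqueness of solutions; the identity $\v_{s,t}(z)-\v_{s,u}(z)=\int_u^tG(\v_{s,\xi}(z),\xi)\,d\xi$ with the order-$d$ bound on $G$ gives the order-$d$ estimate for $(\v_{s,t})$; and uniqueness of the evolution family associated with $G$ is again a consequence of ODE uniqueness.

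\emph{Main obstacle.} The delicate steps are, in the first implication, the regularity bootstrap (upgrading the pointwise order-$d$ hypothesis to joint measurability and to local absolute continuity uniform on compacta, so that the glued $G$ is genuinely a weak holomorphic vector field) and, above all, the proof that almost every time-slice $G(\cdot,t)$ is an infinitesimal generator, which rests on the infinitesimal Schwarz--Pick characterisation of semicompleteness; in the converse direction, the only non-routine point is the a priori bound keeping the solutions inside $\D$, i.e.\ converting the a.e.\ generator hypothesis, via Berkson--Porta and Gronwall, into global existence.
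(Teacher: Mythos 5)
Your converse direction and the overall architecture are reasonable, but the forward direction as written has genuine gaps. Your ``regularity lemma'' asserts three things that do not follow from EF3 plus Cauchy estimates: (i) that the $t$-increments of $\v_{s,t}$ are dominated \emph{uniformly on compacta} by a single $L^d_{\rm loc}$ majorant, (ii) that each $\v_{s,t}$ is univalent, and (iii) that for fixed $s$ there is one null set of times outside which the difference quotients converge locally uniformly to a holomorphic $g_s(\cdot,t)$. EF3 is a pointwise-in-$z$ hypothesis with a $z$-dependent majorant, and a holomorphic function which is small at one or two points need not be small on a compact set; what makes (i) and (iii) true is the structural fact that $\v_{t,t+h}-\id\in\mathrm{Gen}(\mathbb{D})$, so that Berkson--Porta plus the Carath\'eodory distortion theorem (the paper's two-point Lemma \ref{Lema de los dos puntos}) converts control at \emph{two} points --- obtained for a.e.\ $t$ via the Hardy--Littlewood maximal theorem --- into control on compacta; this is exactly how the paper proves Theorem \ref{EF-implica-VF}, Part (b), and Theorem \ref{Derivacion independiente de z (en la t)}(1). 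Univalence is even more delicate: in the paper it is Corollary \ref{Univalencia}, a consequence of the Gronwall estimate \eqref{Eq_Univalencia}, available only \emph{after} one knows the family solves an ODE with a field that is locally Lipschitz in $z$ with $L^1$ constants. Your gluing of the fields $g_s$ into a single $G(\cdot,t)$ requires inverting $\v_{s,t}$ (or at least well-definedness across non-injective preimages), so as described the construction is circular; you also do not address measurability in $t$ of the glued field, which is precisely why the paper invokes the Castaing--Valadier measurable selection theorem. Finally, your ``crux'' (Lebesgue points plus the infinitesimal Schwarz--Pick inequality) needs the \emph{converse} of the inequality the paper cites from \cite{Bracci-Contreras-Diaz-JEMS}, and in any case it leans on the locally uniform expansion $\v_{t_0,t_0+h}=\id+hG(\cdot,t_0)+o(h)$, i.e.\ on the unproved regularity above; the paper's route is both simpler and self-contained: the quotients $n(\v_{s,s+1/n}-\id)$ already lie in the closed cone $\mathrm{Gen}(\mathbb{D})$, so any limit is a generator.

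In the converse direction your plan is viable and genuinely different from the paper's, but the key step is glossed: the inequality $\frac{d}{dt}\bigl(1-|\v_{s,t}(z)|^2\bigr)\ge -c(t)\bigl(1-|\v_{s,t}(z)|^2\bigr)$ amounts to the flow-invariance characterization $\Re\bigl(\overline{w}\,G(w,t)\bigr)\le (1-|w|^2)\,\Re\bigl(\overline{w}\,G(0,t)\bigr)$ of infinitesimal generators; it is true, but it is not a one-line ``check'' from the Berkson--Porta form (note $|p(w,t)|$ may grow like $(1-|w|)^{-1}$, so the naive estimate loses the factor $1-|w|^2$), and you neither prove nor cite it. The paper instead obtains semicompleteness (Theorem \ref{semicompletezza}) from the non-expansion of the hyperbolic distance along pairs of trajectories together with an escape-time argument (equality and time-invariance of escape times, plus a uniform lower bound via Picard iteration), and obtains holomorphicity of $z\mapsto\v_{s,t}(z)$ by an explicit variational-equation/Gronwall argument rather than by quoting holomorphic dependence for Carath\'eodory systems. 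If you supply a proof or reference for the flow-invariance inequality and repair the forward-direction regularity/univalence issues along the paper's lines (generator structure of the difference quotients, maximal function, measurable selection), your outline can be completed, but as it stands the first implication does not go through.
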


Here {\sl essentially unique} means that if $H(z,t)$ is another
Herglotz vector field which satisfies \eqref{main-eq} then
$G(\cdot,t)=H(\cdot,t)$ for almost every $t\in [0,+\infty)$.

Infinitesimal generators have been characterized in several
different ways. In particular, in the proof of the above
theorem we use a result of \cite{Bracci-Contreras-Diaz-JEMS},
from which it follows that
$(d\rho_\D)_{(z,w)}(G(z,t),G(w,t))\leq 0$ for all $t\geq 0$ and
$z\neq w$, where $\rho_\D$ is the hyperbolic distance on $\D$.
This estimate allows us to avoid considering displacement of
fixed points in order to obtain suitable bounds. In fact, a
version of Theorem \ref{main} holds more generally on complex
complete hyperbolic manifolds whose Kobayashi distance is $C^1$
(see \cite{BCM2}).

In the unit disc we have a better description of Herglotz
vector fields, namely, a Berkson-Porta type formula holds for
non-autonomous vector fields which generate evolution families.
We say that a function $p: \D\times[0,+\infty)\to \C$ is a {\sl
Herglotz function of order $d\geq 1$} if it is locally in $L^d$
in $t\geq 0$, holomorphic in $z\in \D$  and $\Re p(z,t)\geq 0$
for all $z\in \D$ and $t\geq 0$ (see Definition
\ref{def-Her-fun}). The following representation formula holds:

\begin{theorem}\label{Berkson-Porta-for-Herglotz}
Let $G(z,t)$ be a Herglotz vector field  of order $d\geq 1$ in
the unit disc. Then there exist a (essentially) unique
measurable function  $\tau:[0,+\infty)\to \oD$  and a Herglotz
function $p(z,t)$ of order $d$ such that for all $z\in \D$
\begin{equation}\label{Herglotz-vf-main}
G(z,t)=(z-\tau(t))(\overline{\tau(t)}z-1)p(z,t) \quad
\hbox{a.e. $t\in [0,+\infty)$}.
\end{equation}
Conversely, given a  measurable function  $\tau:[0,+\infty)\to
\oD$  and a Herglotz function $p(z,t)$ of order $d\geq 1$,
equation \eqref{Herglotz-vf-main} defines a Herglotz vector
field of order $d$.
\end{theorem}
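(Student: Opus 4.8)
The plan is to reduce both directions to the classical Berkson–Porta representation of infinitesimal generators on $\D$. Recall that, by the Berkson–Porta theorem, a holomorphic vector field $F:\D\to\C$ is an infinitesimal generator if and only if there exist $\tau\in\oD$ and a holomorphic $q:\D\to\C$ with $\Re q\geq 0$ such that $F(z)=(z-\tau)(\overline\tau z-1)q(z)$; moreover, when $F\not\equiv 0$ the point $\tau$ is uniquely determined (it is the Denjoy–Wolff point of the generated semigroup).

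\emph{From the vector field to the representation.} Let $G(z,t)$ be a Herglotz vector field of order $d$. By definition there is a null set $N\subset[0,+\infty)$ such that for every $t\notin N$ the map $z\mapsto G(z,t)$ is an infinitesimal generator, so Berkson–Porta gives $\tau(t)\in\oD$ and a holomorphic $q_t$ with $\Re q_t\geq 0$ such that $G(z,t)=(z-\tau(t))(\overline{\tau(t)}z-1)q_t(z)$ for all $z\in\D$; set $p(z,t):=q_t(z)$. First I would fix the ambiguity when $G(\cdot,t)\equiv 0$ by declaring $\tau(t):=0$ there, and set $\tau(t):=0$ on $N$ as well. The three things left to check are: (i) $\tau$ is measurable; (ii) $p$ is measurable in $t$ and locally $L^d$; (iii) uniqueness up to null sets. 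For (i), note that $\tau(t)$ can be recovered from $G(\cdot,t)$ by an explicit formula in terms of two derivatives of $G$ at a basepoint (e.g. evaluating $G$, $\partial_z G$, $\partial_z^2 G$ at $z=0$ and solving the resulting algebraic relations for $\tau(t)$), and these evaluations are measurable in $t$ because $G$ is a weak holomorphic vector field; one has to be a little careful at parameters where the naive formula degenerates, but on each such set an alternative evaluation point works and the resulting patched function is still measurable. For (ii), once $\tau$ is measurable, solve $(\ref{Herglotz-vf-main})$ for $p$: away from the zeros of $z\mapsto (z-\tau(t))(\overline{\tau(t)}z-1)$ we get $p(z,t)=G(z,t)/[(z-\tau(t))(\overline{\tau(t)}z-1)]$, which is measurable in $t$; holomorphy in $z$ and the sign condition $\Re p\geq 0$ are inherited from Berkson–Porta, and the local $L^d$ bound follows from the corresponding bound on $G$ together with a uniform lower bound for $|(z-\tau(t))(\overline{\tau(t)}z-1)|$ on a fixed small disc around a suitable point (chosen depending on whether $\tau(t)$ is near $\partial\D$ or not, using $|\tau(t)|\le 1$). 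For (iii), if $(\tau,p)$ and $(\tilde\tau,\tilde p)$ both satisfy $(\ref{Herglotz-vf-main})$, then for a.e. $t$ with $G(\cdot,t)\not\equiv0$ the uniqueness clause in Berkson–Porta forces $\tau(t)=\tilde\tau(t)$, and then $p(\cdot,t)=\tilde p(\cdot,t)$ by cancellation; on the remaining null set there is nothing to prove.

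\emph{From the representation to the vector field.} Conversely, given measurable $\tau:[0,+\infty)\to\oD$ and a Herglotz function $p$ of order $d$, define $G$ by $(\ref{Herglotz-vf-main})$. For each fixed $t$, holomorphy in $z$ is clear and, by Berkson–Porta, $z\mapsto G(z,t)$ is an infinitesimal generator. It remains to verify that $G$ is a weak holomorphic vector field of order $d$ in the sense of Definition \ref{Definicion-VF}: measurability in $t$ for each fixed $z$ is immediate from measurability of $\tau$ and of $t\mapsto p(z,t)$, and the required local $L^d$ bound on $G$ on compact subsets of $\D$ follows from $|z-\tau(t)|\le 2$, $|\overline{\tau(t)}z-1|\le 2$ and the local $L^d$ bound on $p$.

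\emph{Main obstacle.} The only genuinely delicate point is the measurability and integrability of the factorization data, i.e. step (i)–(ii): extracting $\tau(t)$ measurably from $G(\cdot,t)$ uniformly in $t$, including the degenerate parameters where $G(\cdot,t)\equiv 0$ or where a chosen evaluation formula breaks down, and then controlling $p$ in $L^d_{\mathrm{loc}}$ near those $t$ for which $\tau(t)$ approaches $\partial\D$. Everything else is a direct application of the pointwise-in-$t$ Berkson–Porta theorem.
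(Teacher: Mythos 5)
Your overall strategy (apply the Berkson--Porta theorem pointwise in $t$, then establish measurability and local $L^d$ integrability of the data, and get uniqueness from the pointwise uniqueness of the representation) is exactly the paper's, and your converse direction, your two--regime $L^d$ estimate for $p$ (lower bound at $z=0$ when $|\tau(t)|\geq 1/2$, at $z=3/4$ otherwise, then Harnack/distortion to transfer to a fixed point as in Proposition \ref{integrabilidad=integrabilidaden1punto}), and your uniqueness argument are all sound. But the step you yourself flag as the main obstacle --- measurability of $t\mapsto\tau(t)$ --- is resolved by a device that does not work. You propose to recover $\tau(t)$ ``by an explicit formula in terms of two derivatives of $G$ at a basepoint, e.g.\ evaluating $G$, $\partial_z G$, $\partial_z^2 G$ at $z=0$.'' No such formula exists: writing $G=(z-\tau)(\overline{\tau}z-1)p$, the jet relations at $0$ are $G(0)=\tau p(0)$, $G'(0)=-(1+|\tau|^2)p(0)+\tau p'(0)$, $G''(0)/2=\overline{\tau}p(0)-(1+|\tau|^2)p'(0)+\tau p''(0)$, i.e.\ each new derivative of $G$ introduces a new unknown Taylor coefficient of $p$, so the system is always underdetermined. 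Concretely, the generator $G_1(z)=(z-\tfrac12)(\tfrac z2-1)$ (so $\tau=\tfrac12$, $p\equiv1$) and a generator with $\tau=0.6$ whose Carath\'eodory function has prescribed coefficients $p(0)=5/6$, $p'(0)\approx-0.19$, $\tfrac12 p''(0)\approx-0.44$ (admissible by Carath\'eodory--Toeplitz, since the relevant Toeplitz matrix is diagonally dominant) share the same values of $G(0),G'(0),G''(0)$. Hence $\tau(t)$ is not a function of any finite jet of $G(\cdot,t)$ at a basepoint, and the ``patching at degenerate parameters'' cannot rescue the argument: the formula fails generically, not just on an exceptional set. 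The same issue then propagates to your measurability of $t\mapsto p(z,t)$, which you deduce from measurability of $\tau$.

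This is precisely the point where the paper invests its real work: it first proves (Lemma \ref{laszlo}) that $t\mapsto G(\cdot,t)$ is measurable as a map into the Fr\'echet space $\mathrm{Hol}(\mathbb D,\mathbb C)$ (via measurability of the Taylor coefficients and approximation by polynomial truncations), and then proves (Proposition \ref{BP-continuidad}, by a normal-families argument together with the uniqueness of the Berkson--Porta representation) that the maps $F\mapsto\tau_F$ and $F\mapsto p_F$ are continuous on $\mathrm{Gen}(\mathbb D)\setminus\{\underline 0\}$; composing, and treating the measurable set $\{t:G(\cdot,t)\equiv 0\}$ separately, yields measurability of both $\tau$ and $t\mapsto p(\cdot,t)$. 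To complete your proof you need an argument of this type (or some other genuinely global, limit-based recovery of $\tau$ whose measurability in $t$ you actually verify); as written, the extraction of $\tau$ is a genuine gap, while the remainder of your proposal coincides in substance with the paper's proof of Theorem \ref{Herglotz-implica-VF}.
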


Here ``essentially unique'' means that $\tau, p$ are unique up
to changes on zero measure sets or on the set where $G\equiv 0$
(see  Theorem \ref{Herglotz-implica-VF} for a precise
statement).

There is thus an (essentially) one-to-one correspondence among
evolution families $(\v_{s,t})$ of order $d\geq 1$, Herglotz
vector fields $G(z,t)$ of order $d\geq 1$, and couples
$(p,\tau)$ of Herglotz functions $p(z,t)$ of order $d$ and
measurable functions $\tau:[0,+\infty)\to \oD$. In what follows
we say that the couple $(p,\tau)$ is the {\sl Berkson-Porta
data} for $(\v_{s,t})$.

Going back to Loewner equations, the   previous two theorems
can be combined saying that the following differential equation
\begin{equation}\label{generalLoew}
\left\{
\begin{array}
[c]{l}%
\overset{\bullet}{w}=(w-\tau(t))(\overline{\tau(t)}w-1)p(w,t)\quad\text{
for a. e.
}t\in\lbrack s,+\infty)\\
w(s)=z.
\end{array}
\right.
\end{equation}
has a family of solutions $(\v_{s,t})$ which form an evolution
family of order $d\geq 1$ provided $p(w,t)$ is a Herglotz
function of order $d\geq 1$ and $\tau:[0,+\infty)\to \oD$ is a
measurable function.

We point out that equation \eqref{generalLoew} contains all the
Loewner type equations studied so far in the literature, where
in fact only  Herglotz functions of order $\infty$ and
$\tau\equiv \hbox{const}$ are considered. In case $\tau\equiv
\hbox{const}$, evolution families of order $d\geq 1$ can be
defined by means of weaker conditions, such as properties of
regularity of first derivatives (see Theorem
\ref{EFwithcommonDW}).

The plan of the paper is the following. In Section 2 we collect
some preliminary results from iteration theory and semigroups
theory. In Section 3 we deal with evolution families, proving
some results about continuity in the two parameters. In Section
4 we introduce Herglotz vector fields and prove that they are
semicomplete (Theorem \ref{semicompletezza}). Then we relate
Herglotz vector fields with Herglotz functions (Theorem
\ref{Herglotz-implica-VF}) proving thus Theorem
\ref{Berkson-Porta-for-Herglotz}. In Section 5 we prove Theorem
\ref{Herglotz-implica-EF} which shows that solutions of a
Herglotz vector field form an evolution family (proving thus
one  part of Theorem \ref{main}). In Section 6 we prove the
other part of Theorem \ref{main} (see Theorem
\ref{EF-implica-VF}). With such a result at hand, moving from
evolution families to Herglotz vector fields and Herglotz
functions, we can prove some more regularity properties of
evolution families with respect to the two parameters (Theorem
\ref{Derivacion independiente de z (en la t)} and Theorem
\ref{Derivacion indep. de z (en la s)}). In particular, we show
that
\begin{equation*}
\frac{\partial\varphi}{\partial
s}(z,s,t)=-G(z,s)\varphi_{s,t}^{\prime}(z).
\end{equation*}
In Corollary \ref{Univalencia} we show that all the elements of
an evolution family must be univalent and, in Corollary
\ref{uniqueHerglotz}, that Herglotz vector fields are (almost
everywhere) characterized by their trajectories proving the
essential uniqueness of the previous Theorems \ref{main} and
\ref{Berkson-Porta-for-Herglotz}.

In the last two sections of the paper we get back to radial and
chordal Loewner equations. Namely, in Section 7 we turn our
attention to the case of a common fixed point (either in $\D$
or $\de \D$), proving regularity of the first derivative at the
common Denjoy-Wolff point (see Theorem
\ref{Continuidad-absoluta-multiplicador}). Finally, in Section
8 we concentrate on the case of a common fixed point on $\de
\D$, translating our results to the right half-plane and
including the previous cited results in our framework.

\medskip

We thank prof. Laszlo Lempert for a useful suggestion which
allowed us to prove directly Lemma \ref{laszlo}.

\section{Preliminaries from iteration theory}\label{preli}

As usual, we use the symbol $\angle$ before a limit to denote
the angular (non-tangential) limit either in the unit disc  or
in the right half-plane. For a given self-map $f$ of
${\mathbb{D}}$ and a point $p\in\partial{\mathbb{D}}$, we say
that $p$ is a (boundary) fixed point of the function $f$ if
$\angle \lim_{z\rightarrow p}f(z)=p.$ In general, if the
angular limit $q=\angle \lim_{z\rightarrow p}f(z)$ also belongs
to $\partial{\mathbb{D}}$, then the angular limit
$\angle\lim_{z\rightarrow p}\dfrac{f(z)-q}{z-p}$ exists (on the
Riemann sphere
$\widehat{{\mathbb{C}}}={\mathbb{C}}\cup\{\infty\}$) and it is
different from zero (see \cite{Pommerenke-II}). This limit is
known as the \textit{angular derivative\/} of $f$ at $p$ (in
the sense of Carath\'{e}odory) and we denote it by
$f^{\prime}(p)$.

We will write $f_{n}$ for the $n$-th iterate of a self-map $f$
of ${\mathbb{D}}$, defined inductively by $f_{1}=f$ and
$f_{n+1}=f\circ f_{n}$, $n\in{\mathbb{N}}$.

It can be  easily deduced from the Schwarz-Pick lemma that a
non-identity self-map $f$ of the unit disc  can have at most
one fixed point in ${\mathbb{D}}$. If such a unique fixed point
in ${\mathbb{D}}$ exists, it is usually called the
\textit{Denjoy-Wolff point\/}. The sequence of iterates
$\{f_{n}\}$ of $f$ converges to it uniformly on the compact
subsets of ${\mathbb{D}}$ whenever $f $ is not a disc
automorphism.

If $f$ has no fixed points in ${\mathbb{D}}$, the Denjoy-Wolff
theorem (see \cite{Abate}) guarantees the existence of a unique
point $\tau$ on the unit circle $\partial{\mathbb{D}}$ which is
the \textit{attractive fixed point\/}, that is, the sequence of
iterates $\{f_{n}\}$ converges to $\tau$ uniformly on the
compact subsets of ${\mathbb{D}}$. Such $\tau$ is again called
the \textit{Denjoy-Wolff point\/} of $f$. When
$\tau\in\partial{\mathbb{D}}$ is the Denjoy-Wolff point of $f$,
then $f^{\prime}(\tau)$ is actually real-valued and, moreover,
$0<f^{\prime}(\tau)\leq1$ (see \cite{Pommerenke-II}). Note that
$f$ can have other (boundary) fixed points. If
$p\in\partial\mathbb{D} $ is a fixed point of $f$ different
from the Denjoy-Wolff point then $f^{\prime
}(p)\in(1,+\infty)\cup\{\infty\}.$ As is often done in the
literature, we classify the holomorphic self-maps of the disc
into three categories according to their behavior near the
Denjoy-Wolff point:

\begin{itemize}
\item[(a)] \textit{elliptic\/}: the ones with a fixed point inside the disc ;

\item[(b)] \textit{hyperbolic\/}: the ones with the Denjoy-Wolff point
$\tau\in\partial{\mathbb{D}}$ such that $f^{\prime}(\tau)<1$;

\item[(c)] \textit{parabolic\/}: the ones with the Denjoy-Wolff point $\tau
\in\partial{\mathbb{D}}$ such that $f^{\prime}(\tau)=1$.
\end{itemize}

The following simple and standard procedure is suitable for
both hyperbolic and parabolic maps. Let $\tau$ be the
Denjoy-Wolff point of a self-map $f$ of ${\mathbb{D}}$, with
$|\tau|=1$. The Cayley transform
$T_{\tau}(z)=\frac{\tau+z}{\tau-z}$ maps ${\mathbb{D}}$
conformally onto the right half-plane
${\mathbb{H}}=\{z\,\colon\,\text{Re\thinspace}z>0\}$ and takes
the point $\tau$ to infinity. Thus, to every self-map $f$ of
${\mathbb{D}}$ there corresponds a unique self-map $g$ of
${\mathbb{H}}$, called the \textit{conjugate map\/} of $f$,
such that $g=T_{\tau}\circ f\circ T_{\tau}^{-1}$ with
Denjoy-Wolff point at $\infty$ in ${\mathbb{H}}$. Namely,
$\angle\lim_{w\rightarrow\infty}\frac
{g(w)}{w}=f^{\prime}(\tau)^{-1}.$

A \textit{(one-parameter) semigroup of holomorphic functions}
is a  continuous homomorphism $\Phi:t\mapsto\Phi(t)=\phi_{t}$
from the additive semigroup of non-negative real numbers into
the composition semigroup of holomorphic self-maps of
$\mathbb{D}$. Namely, $\Phi$ satisfies the following three
conditions:

\begin{enumerate}
\item[S1.] $\phi_{0}$ is the identity in $\mathbb{D},$

\item[S2.] $\phi_{t+s}=\phi_{t}\circ\phi_{s},$ for all $t,s\geq0,$

\item[S3.] $\phi_{t}(z)$ tends to $z$ as $t$ tends to $0,$ uniformly on
compact subsets of $\mathbb{D}.$
\end{enumerate}

Given a semigroup $\Phi=(\phi_{t})$, it is well-known (see
\cite{Shoikhet}, \cite{Berkson-Porta}) that there exists a
\textit{unique} holomorphic function $G:\mathbb{D\rightarrow
C}$ such that,
\[
\frac{\partial\phi_{t}(z)}{\partial t}=G\left(  \phi_{t}(z)\right)  =G\left(
z\right)  \frac{\partial\phi_{t}(z)}{\partial z}\quad\text{for all }%
z\in\mathbb{D}\text{ and }t\geq0.
\]
To simplify the notation, we denote
$\phi_{t}^{\prime}(z)=\dfrac{\partial \phi_{t}(z)}{\partial
z}.$ In what follows, $G$ will be called the \textit{vector
field} associated with $\Phi\ $or the (\textit{infinitesimal)
generator }of $\Phi.$ We warn the reader that, in
\cite{Shoikhet},\ these vector fields are introduced with a
different sign convention.

There is a very nice representation, due to Berkson and Porta
\cite{Berkson-Porta}, of those holomorphic functions on the
disc which are infinitesimal generators. A holomorphic function
$G:\mathbb{D\rightarrow C}$ is the infinitesimal generator of a
semigroup $\Phi$ of holomorphic self-maps of $\mathbb D$  if
and only if there exist  $\tau\in\overline{\mathbb{D}}$ and a
holomorphic function $p:\mathbb{D\rightarrow C}$ with $\Re
p\geq0$ such that
\[
G(z)=(\tau-z)(1-\overline{\tau}z)p(z),\text{ \quad}z\in\mathbb{D}.
\]
Moreover, if $G$ is not identically zero, then such a representation is
unique. In fact, the point $\tau$ is the \textit{Denjoy-Wolff point} of all
the functions of the semigroup.

We denote by $\mathrm{Gen}(\mathbb{D})$ the set of all the
infinitesimal generators of semigroups of holomorphic self-maps
of the unit disc. It is well-known that
$\mathrm{Gen}(\mathbb{D})$ is closed in
$\mathrm{Hol}(\mathbb{D},\mathbb{C})$ and a real convex cone in
$\mathrm{Hol}(\mathbb{D},\mathbb{C})$ with vertex at $0$ (see,
for example, \cite{Abate} and \cite{Shoikhet}). A useful
example of infinitesimal generator is given by $G=\varphi-\id$
for  $\varphi \in\mathrm{Hol}(\mathbb{D},\mathbb{D})$
\cite[Corollary 3.3.1]{Shoikhet}.

The following two facts, related to the continuity of the
so-called Heins map (see \cite{He} and \cite{Br1}) might be
known but, since we do not have a reference, we include their
proofs here for the sake of completeness.

\begin{proposition}
\label{BP-continuidad} Endow $\mathrm{Gen}(\mathbb{D})$ and
$\mathrm{Hol}(\mathbb{D},\mathbb{C})$  with the compact-open
topology, and let $\underline{0}$ denote the zero function,
$\underline{0}(z)=0$ for all $z\in \D$. For all $F\in
\mathrm{Gen}(\mathbb{D})$, using the Berkson-Porta
representation we write
\[
F(z)=(z-\tau_{F})(\overline{\tau_{F}}z-1)p_{F}(z),\text{
}z\in\mathbb{D}.
\]
Then the following two maps are continuous
\begin{align*}
BP_{\tau}  &
:\mathrm{Gen}(\mathbb{D})\setminus\{\underline{0}\}\rightarrow\overline
{\mathbb{D}},\qquad
\mathrm{Gen}(\mathbb{D})\setminus\{\underline{0}\}\ni F\mapsto
BP_{\tau
}(F):=\tau_{F}\\
BP_{p}  &
:\mathrm{Gen}(\mathbb{D})\setminus\{\underline{0}\}\rightarrow\mathrm{Hol}
(\mathbb{D},\mathbb{C}),\qquad
\mathrm{Gen}(\mathbb{D})\setminus\{\underline{0}\}\ni F\mapsto
BP_{p}(F):=p_{F}.
\end{align*}
\end{proposition}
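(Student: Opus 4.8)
The plan is to deduce continuity of $BP_\tau$ and $BP_p$ from the uniqueness in the Berkson--Porta representation together with a compactness/normal-families argument. Concretely, fix a sequence $F_n\to F$ in $\mathrm{Gen}(\mathbb D)\setminus\{\underline 0\}$ (the compact-open topology on $\mathrm{Hol}(\mathbb D,\mathbb C)$ is metrizable, so sequences suffice), write $F_n(z)=(z-\tau_n)(\overline{\tau_n}z-1)p_n(z)$ and $F(z)=(z-\tau)(\overline\tau z-1)p(z)$, and show $\tau_n\to\tau$ and $p_n\to p$ uniformly on compacta. Since $\overline{\mathbb D}$ is compact, after passing to a subsequence we may assume $\tau_n\to\tau_\ast\in\overline{\mathbb D}$. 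The factor $q_n(z):=(z-\tau_n)(\overline{\tau_n}z-1)$ then converges uniformly on compacta of $\mathbb D$ to $q_\ast(z):=(z-\tau_\ast)(\overline{\tau_\ast}z-1)$, and $q_\ast$ is \emph{not} identically zero (its only possible zero in $\mathbb D$ is $\tau_\ast$, a single point). Hence on any compact $K\subset\mathbb D$ avoiding $\tau_\ast$ we have $p_n=F_n/q_n\to F/q_\ast$ uniformly; since $p_n$ are holomorphic on all of $\mathbb D$ and locally uniformly bounded near $\tau_\ast$ (because $F_n\to F$ is bounded on a neighborhood of $\tau_\ast$ while $q_n$ stays bounded away from $0$ on a punctured neighborhood, and one invokes the maximum principle / removable singularity to control $p_n$ at $\tau_\ast$ itself), Montel's theorem gives that a further subsequence of $(p_n)$ converges uniformly on compacta to some holomorphic $p_\ast$ with $\Re p_\ast\ge 0$. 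Passing to the limit in $F_n=q_n p_n$ yields $F(z)=(z-\tau_\ast)(\overline{\tau_\ast}z-1)p_\ast(z)$.

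Now the uniqueness part of the Berkson--Porta theorem (valid since $F\not\equiv 0$) forces $\tau_\ast=\tau$ and $p_\ast=p$. Because every subsequence of $(\tau_n)$ has a further subsequence converging to $\tau$, and every subsequence of $(p_n)$ has a further subsequence converging to $p$ in the compact-open topology, the full sequences converge: $\tau_n\to\tau$ in $\overline{\mathbb D}$ and $p_n\to p$ in $\mathrm{Hol}(\mathbb D,\mathbb C)$. This is exactly continuity of $BP_\tau$ and $BP_p$ at $F$, and since $F$ was arbitrary the two maps are continuous.

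The step I expect to be the main obstacle is the uniform bound on $(p_n)$ near the limiting point $\tau_\ast$, needed to apply Montel's theorem on all of $\mathbb D$ rather than just away from $\tau_\ast$. Away from $\tau_\ast$ it is immediate from $p_n=F_n/q_n$ and local uniform convergence of numerator and denominator (the latter bounded below). Near $\tau_\ast$, if $\tau_\ast\in\mathbb D$ one uses that $p_n$ extends holomorphically across the zero of $q_n$, so $\sup_{|z-\tau_\ast|=r}|p_n(z)|$ controls $p_n$ on the whole disc $|z-\tau_\ast|<r$ by the maximum principle, and that circle-sup is itself controlled by the convergence of $F_n$; if $\tau_\ast\in\partial\mathbb D$ the point is not in $\mathbb D$ and there is nothing extra to check. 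Alternatively, one can sidestep this entirely by noting $\Re p_n\ge 0$ and using the Herglotz representation of $p_n$ by positive measures $\mu_n$ on $\partial\mathbb D$ with $p_n(0)=$ Re$\,p_n(0)+i\,$Im$\,p_n(0)$: the real parts of $p_n(0)$ are bounded (read off from $F_n(0)=-\tau_n p_n(0)$, using $\tau_n\to\tau_\ast$ and, when $\tau_\ast=0$, a separate elementary argument), so the masses $\mu_n(\partial\mathbb D)$ are bounded, weak-$\ast$ compactness of measures then yields a convergent subsequence of $(p_n)$ directly. Either route closes the argument; the rest is the routine limit-passage and the appeal to Berkson--Porta uniqueness already quoted in the excerpt.
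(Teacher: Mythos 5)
Your proof is correct and follows essentially the same route as the paper's: extract subsequences so that $\tau_n\to\tau_\ast$ and $p_n\to p_\ast$, pass to the limit in the Berkson--Porta factorization, invoke its uniqueness (legitimate since $F\not\equiv\underline{0}$), and conclude via the subsequence principle. The only difference is in one sub-step: where the paper gets compactness of $\{p_n\}$ by citing normality of holomorphic maps into the half-plane $\{\Re w>0\}$, you establish local boundedness directly from $p_n=F_n/q_n$ together with a maximum-principle estimate near $\tau_\ast$, which is a valid (and slightly more explicit) justification of the same point.
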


\begin{proof}
By the uniqueness of the Berkson-Porta representation, the maps
$BP_{\tau}$ and $BP_{p}$ are well-defined. We only give the
proof for the continuity of $BP_{\tau}$, because the other is
almost identical. Let $\{F_{n}\}\subset
\mathrm{Gen}(\mathbb{D})\setminus\{\underline{0}\}$ converging
to  $F\in \mathrm{Gen}(\mathbb{D})\setminus\{\underline{0}\}$.
Let $\tau _{n}:=BP_{\tau}(F_{n})$ and $\tau:=BP_{\tau}(F)$. We
need to show that $\tau_n\to \tau$. To this aim, it is enough
to show that any converging subsequence of $\{\tau _{n}\}$
converges to $\tau$. Let $\{\tau_{n_{k}}\}$ be a  subsequence
converging to some $\alpha\in\overline{\mathbb{D}}$. Since
$\{w\in \C: \Re w>0\}$ is hyperbolic, the family
$\{p_{n}:=BP_{p}(F_{n}):n\in\mathbb{N}\}$ is a normal family in
$\mathrm{Hol}(\mathbb{D},\mathbb{C})$. The sequence $\{F_{n}\}$
is  convergent and thus, up to extract subsequences, we can
assume that $\tau_{n_{k}}\to\alpha$ and $p_{n_{k}}\to p$ for
some $p\in\mathrm{Hol}(\mathbb{D},\mathbb{C})$ with $\Re
p\geq0$.
Therefore, for all $z\in\mathbb{D},$%
\[
F(z)=\lim_{k\to \infty
}F_{n_{k}}(z)=(z-\alpha)(\overline{\alpha}z-1)p(z).
\]
On the other hand, $F(z)=$
$(z-\tau)(\overline{\tau}z-1)p_{F}(z)$. By the uniqueness of
the Berkson-Porta representation, we conclude that
$\alpha=\tau$ as wanted.
\end{proof}

\begin{lemma}
\label{Lema de los dos puntos} Let $\{G_{n}\}$ be a sequence in $\mathrm{Gen}(\mathbb{D}%
)$ such that there are two different points
$z_{0},z_{1}\in\mathbb{D}$ and two sequences $\{u_{n}\}$ and
$\{v_{n}\}$ in $\mathbb{D}$ with $\lim_{n}u_{n}=z_{0}$ and
$\lim_{n}v_{n}=z_{1}$ such that
\[
\sup_{n}|G_{n}(u_{n})|<+\infty\text{ \quad and }\quad\sup_{n}|G_{n}%
(v_{n})|<+\infty.
\]
Then there exists a subsequence $\{G_{n_{k}}\}$ converging to
an infinitesimal generator $G\in\mathrm{Gen}(\mathbb{D})$.
\end{lemma}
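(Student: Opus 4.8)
The plan is to exploit the Berkson--Porta representation together with two compactness arguments --- one for the boundary points $\tau_{G_n}$ and one for the Herglotz-type factors $p_{G_n}$ --- using the two-point bound precisely to keep the latter from degenerating.

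First I would write, for each $n$, a Berkson--Porta representation
\[
G_n(z)=(z-\tau_n)(\overline{\tau_n}z-1)\,p_n(z),\qquad z\in\D,
\]
with $\tau_n\in\oD$ and $p_n\in\mathrm{Hol}(\D,\C)$ satisfying $\Re p_n\ge 0$ (if $G_n\equiv\underline{0}$ one may simply take $\tau_n=0$ and $p_n\equiv 0$). Since $\oD$ is compact I can pass to a subsequence along which $\tau_{n_k}\to\alpha\in\oD$. Next, since the right half-plane is hyperbolic, the family $\{p_n\}$ is normal, exactly as in the proof of Proposition \ref{BP-continuidad}; so after a further extraction I may assume that either $p_{n_k}\to p$ locally uniformly on $\D$ for some holomorphic $p$ with $\Re p\ge 0$, or $p_{n_k}\to\infty$ locally uniformly on $\D$.

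The main obstacle --- and the only place where the hypothesis is really used --- is to exclude the second alternative. The key elementary observation is that $|\overline{\tau_{n_k}}z-1|\ge 1-|z|>0$ for every $z\in\D$, so this factor stays bounded away from $0$, uniformly in $k$, on compact subsets of $\D$. Since $z_0\ne z_1$, at least one of them --- say $z_0$ --- is different from $\alpha$; I would then pick a closed ball $K\subset\D$ centred at $z_0$ and avoiding $\alpha$, so that for $k$ large $u_{n_k}\in K$ and $|(u_{n_k}-\tau_{n_k})(\overline{\tau_{n_k}}u_{n_k}-1)|\ge c>0$ with $c$ independent of $k$. If $p_{n_k}\to\infty$ uniformly on $K$, then $|G_{n_k}(u_{n_k})|\ge c\,|p_{n_k}(u_{n_k})|\to\infty$, contradicting $\sup_n|G_n(u_n)|<+\infty$; and if it happens that $\alpha=z_0$, the symmetric argument with $v_n\to z_1\ (\ne\alpha)$ and $\sup_n|G_n(v_n)|<+\infty$ gives the contradiction instead. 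Hence $p_{n_k}\to p$ locally uniformly with $\Re p\ge 0$.

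Finally I would conclude that $G_{n_k}\to G$ locally uniformly on $\D$, where $G(z):=(z-\alpha)(\overline{\alpha}z-1)p(z)$. As $G$ is in Berkson--Porta form it is an infinitesimal generator, $G\in\mathrm{Gen}(\D)$ (equivalently, one may invoke the closedness of $\mathrm{Gen}(\D)$ in $\mathrm{Hol}(\D,\C)$). Everything here is routine compactness except for the dichotomy above and its resolution; the reason two points rather than one are needed is exactly that $\alpha$ can coincide with one of $z_0,z_1$, and then the polynomial factor $(z-\alpha)(\overline{\alpha}z-1)$ vanishes there, so that point alone yields no contradiction.
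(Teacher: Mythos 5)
Your proposal is correct and follows essentially the same route as the paper's proof: Berkson--Porta representation, compactness of $\{\tau_n\}$ plus normality of $\{p_n\}$, and the two-point hypothesis used exactly to rule out compact divergence of $p_{n_k}$ when $\tau$ happens to coincide with one of the limit points. The only cosmetic difference is that you make the uniform lower bound on the factor $(z-\tau_{n_k})(\overline{\tau_{n_k}}z-1)$ near $z_0$ explicit, whereas the paper passes directly to the limit along $u_{n_k}$; both are fine.
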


\begin{proof}
By Berkson-Porta's theorem, there are points $\tau_{n}\in\overline{\mathbb{D}%
}$ and holomorphic maps
$p_{n}:\mathbb{D}\rightarrow\mathbb{C},$ with $\Re p_{n}\geq0$,
such that $G_{n}(z)=(z-\tau_{n})(\overline
{\tau_{n}}z-1)p_{n}(z)$ for all $z\in\mathbb{D}$. Since the
sequence $\{\tau_{n}\}$ is bounded and
$\{p_{n}:n\in\mathbb{N}\}$ is a normal family, there exist a
strictly increasing sequence of natural numbers $\{n_{k}\}$ and
a point $\tau\in\overline{\mathbb{D}}$ such that
$\tau_{n_{k}}\rightarrow\tau$ and $p_{n_{k}}$ converges
uniformly on compacta  either to an holomorphic function
$p:\mathbb{D}\rightarrow\mathbb{C}$ or to $\infty.$

Suppose that $\{p_{n_{k}}\}$  compactly diverges
  to $\infty.$ Since $z_{0}$ and
$z_{1}$ are different, we may assume that
$\tau\neq z_{0}$. Then we have that%
\begin{align*}
+\infty & >\sup_{n}|G_{n}(u_{n})|\geq\lim_{k}|G_{n_{k}}(u_{n_{k}})|=\lim
_{k}|(u_{n_{k}}-\tau_{n_{k}})(\overline{\tau_{n_{k}}}u_{n_{k}}-1)p_{n_{k}%
}(u_{n_{k}})|\\
& =|(z_{0}-\tau)(\overline{\tau}z_{0}-1)|\lim_{k}|p_{n_{k}}(u_{n_{k}%
})|=+\infty.
\end{align*}
A contradiction. So $(p_{n_{k}})$ converges uniformly on
compacta to a holomorphic function
$p:\mathbb{D}\rightarrow\mathbb{C}$ with $\Re p\geq0.$ Letting
$G(z)=(z-\tau)(\overline{\tau}z-1)p(z)$, it follows then that
$(G_{n_{k}})$ converges uniformly on compacta to $G$ and, again
by Berkson-Porta's theorem, $G$ is an infinitesimal generator.
\end{proof}

\begin{remark}
It is worth noticing that the above lemma would not be  true if
we only assume that there is only one point
$z_{0}\in\mathbb{D}$ and one sequence $(u_{n}) $ in
$\mathbb{D}$ with $\lim_{n}u_{n}=z_{0}$ such that
\[
\sup_{n}|G_{n}(u_{n})|<+\infty\text{.}%
\]
For example,  consider the sequence of infinitesimal generators
given by $G_{n}(z):=-nz$, for all $z\in\mathbb{D}$ ($G_{n}$ is
the infinitesimal generator of the semigroup
$\varphi_{t}(z)=e^{-nt}z$).
\end{remark}

\section{Evolution families in the unit disc}

\begin{definition}\label{def-ev}
A family $(\varphi_{s,t})_{0\leq s\leq t<+\infty}$ of
holomorphic self-maps of the unit disc  is an {\sl evolution
family of order $d$} with $d\in [1,+\infty]$ (in short, an {\sl
$L^d$-evolution family}) if

\begin{enumerate}
\item[EF1.] $\varphi_{s,s}=id_{\mathbb{D}},$

\item[EF2.] $\varphi_{s,t}=\varphi_{u,t}\circ\varphi_{s,u}$ for all $0\leq
s\leq u\leq t<+\infty,$

\item[EF3.] for all $z\in\mathbb{D}$ and for all $T>0$ there exists a
non-negative function $k_{z,T}\in L^{d}([0,T],\mathbb{R})$ such
that
\[
|\varphi_{s,u}(z)-\varphi_{s,t}(z)|\leq\int_{u}^{t}k_{z,T}(\xi)d\xi
\]
for all $0\leq s\leq u\leq t\leq T.$
\end{enumerate}
\end{definition}

Sometimes in the proofs, and for the sake of clearness, we will use the
notation $\varphi(z,s,t)$ instead of $\varphi_{s,t}(z),$ where $z\in
\mathbb{D}$ and $0\leq s\leq t.$

\begin{remark}
Clearly, by the very definition, if $(\varphi_{s,t})_{0\leq
s\leq t<+\infty}$  is an evolution family of order $d$ then it
is also an evolution family of order $d'$ for all $1\leq d'\leq
d$.
\end{remark}

\begin{example}
Let $d\geq 1$. Let $\lambda:[0,+\infty)\to \R^+$ be an
absolutely continuous increasing function such that
$\overset{\bullet}{\lambda}\in L^d_{{\sf loc}}([0,+\infty),\R)$
but  $\overset{\bullet}{\lambda}\not\in L^{k}_{{\sf
loc}}([0,+\infty),\R)$ for any $k>d$. Then $\v_{s,t}(z):=\exp(
\lambda(s)-\lambda(t)) z$ is an evolution family of order $d$
which is not of order $k$ for any $k>d$.
\end{example}

\begin{example}
Let $(\phi_{t})$ be a semigroup of holomorphic self-maps of
$\D$. Let $\varphi_{s,t}:=\phi_{t-s}$ for  $0\leq s\leq
t<+\infty.$ Then $(\varphi_{s,t})$ is an evolution family of
order $\infty.$ Indeed, clearly the family $(\varphi_{s,t}) $
satisfies EF1 and EF2. We have only have to check  EF3. Fix
$z\in\mathbb{D}$ and $T>0.$ Then there is a number $R$ such
that $|\phi_{\xi}(z)|\leq R$ for all $0\leq\xi\leq T.$
Therefore, there is $M=M(z,T)>0$ such that
$|G(\phi_{\xi}(z))|\leq M,$ for all $0\leq\xi\leq T$, where $G$
is the infinitesimal generator of the
semigroup. Then, for all $0\leq s\leq u\leq t\leq T,$ we have%
\begin{equation*}
\begin{split}
|\varphi_{s,u}(z)-\varphi_{s,t}(z)|&\leq|\phi_{u-s}(z)-\phi_{t-s}%
(z)|=\left\vert \int_{u-s}^{t-s}\frac{\partial\phi_{\xi}(z)}{\partial\xi}%
d\xi\right\vert \\&=\left\vert
\int_{u-s}^{t-s}G(\phi_{\xi}(z))d\xi\right\vert
\leq\int_{u}^{t}Md\xi.
\end{split}
\end{equation*}

\end{example}

In this preliminary section we state some properties related to
continuity of the evolution family with respect to the real
parameters. Throughout the paper, we denote by
$\rho_{\mathbb{D}}(z,w)$ the hyperbolic distance in the unit
disc between two points $z,w\in\mathbb{D}$.

\begin{proposition}
\label{EF-continuidad} Let $d\geq 1$ and let $(\varphi_{s,t})$
be an evolution family of order $d$ in the unit disc. The map
$(s,t)\mapsto\varphi_{s,t}\in\mathrm{Hol}(\mathbb{D}
,\mathbb{C})$ is jointly continuous. Namely, given a compact
set $K\subset\mathbb{D}$ and two sequences $\{s_{n}\},$
$\{t_{n}\}$ in $[0,+\infty),$ with $0\leq s_{n}\leq t_{n},$
$s_{n}\rightarrow s,$ and $t_{n}\rightarrow t$, then
$\lim_{n\to \infty}\varphi_{s_{n},t_{n}}=\varphi_{s,t}$
uniformly on $K.$
\end{proposition}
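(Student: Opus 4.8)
The plan is to reduce joint continuity to three routine ingredients together with the composition rule EF2. Since every $\varphi_{s,t}$ is a self-map of $\mathbb{D}$, the family $\{\varphi_{s,t}\}$ is normal in $\mathrm{Hol}(\mathbb{D},\mathbb{C})$; hence it suffices to prove that every subsequence of $\{\varphi_{s_n,t_n}\}$ has a further subsequence converging to $\varphi_{s,t}$ uniformly on compacta. I will use Vitali's theorem to pass from pointwise to locally uniform convergence of holomorphic functions, and the elementary \emph{composition lemma}: if $f_n\to f$ uniformly on compacta with $f\in\mathrm{Hol}(\mathbb{D},\mathbb{D})$, and $g_n\to g$ uniformly on compacta in $\mathrm{Hol}(\mathbb{D},\mathbb{C})$, then $g_n\circ f_n\to g\circ f$ uniformly on compacta; this is proved by the splitting $|g_n(f_n(z))-g(f(z))|\le|g_n(f_n(z))-g(f_n(z))|+|g(f_n(z))-g(f(z))|$, using that $f(K)$, and hence $f_n(K)$ for $n$ large, is contained in a fixed compact subset of $\mathbb{D}$ on which $g$ is uniformly continuous.

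The first step is to show that short-time evolution maps are close to the identity: if $r_n\le r_n'$ and $r_n,r_n'\to r$, then for fixed $z\in\mathbb{D}$ and $T$ exceeding all the $r_n'$, EF1 and EF3 give
\[
|z-\varphi_{r_n,r_n'}(z)|=|\varphi_{r_n,r_n}(z)-\varphi_{r_n,r_n'}(z)|\le\int_{r_n}^{r_n'}k_{z,T}(\xi)\,d\xi,
\]
which tends to $0$ by absolute continuity of the integral of $k_{z,T}\in L^d([0,T])\subset L^1([0,T])$. Thus $\varphi_{r_n,r_n'}(z)\to z$ for every $z$, and Vitali gives $\varphi_{r_n,r_n'}\to\mathrm{id}_{\mathbb{D}}$ uniformly on compacta. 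In particular this already proves the Proposition when $s=t$.

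Next I would handle one variable at a time. Continuity in the second variable with the first fixed is immediate from EF3: if $u$ is fixed and $t_n\to t\ge u$, then $|\varphi_{u,t_n}(z)-\varphi_{u,t}(z)|\le\int_{\min(t,t_n)}^{\max(t,t_n)}k_{z,T}(\xi)\,d\xi\to 0$, so $\varphi_{u,t_n}\to\varphi_{u,t}$ uniformly on compacta by Vitali. Continuity in the first variable, with the second variable $u>s$ fixed and $s_n\to s$, is the point I expect to be the main obstacle, since no such direct estimate is available; here I would exploit EF2 and argue along subsequences. After passing to a subsequence we may assume either $s_n\le s$ for all $n$, or $s_n>s$ for all $n$ (and then $s_n\le u$ for $n$ large). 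In the first case EF2 gives $\varphi_{s_n,u}=\varphi_{s,u}\circ\varphi_{s_n,s}$, so the first step and the composition lemma yield $\varphi_{s_n,u}\to\varphi_{s,u}$. In the second case EF2 gives $\varphi_{s,u}=\varphi_{s_n,u}\circ\varphi_{s,s_n}$; passing to a further subsequence with $\varphi_{s_n,u}\to\psi$ by normality and using $\varphi_{s,s_n}\to\mathrm{id}_{\mathbb{D}}$ from the first step, the composition lemma forces $\psi$ to equal the left-hand side $\varphi_{s,u}$, which does not depend on $n$. Hence $\varphi_{s_n,u}\to\varphi_{s,u}$ uniformly on compacta in all cases.

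Finally, for the general statement: if $s=t$ apply the first step; if $s<t$, fix $u$ with $s<u<t$, so that $s_n\le u\le t_n$ for $n$ large and EF2 gives $\varphi_{s_n,t_n}=\varphi_{u,t_n}\circ\varphi_{s_n,u}$. By the one-variable steps, $\varphi_{u,t_n}\to\varphi_{u,t}$ and $\varphi_{s_n,u}\to\varphi_{s,u}$ uniformly on compacta, and then the composition lemma together with EF2 gives $\varphi_{s_n,t_n}\to\varphi_{u,t}\circ\varphi_{s,u}=\varphi_{s,t}$ uniformly on compacta, as desired. The only genuinely non-formal point in the argument is the right-continuity of $s\mapsto\varphi_{s,u}$, which I resolve by combining EF2 with the normality of the family; everything else is the $L^1$-absolute continuity built into EF3.
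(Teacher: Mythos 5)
Your proof is correct, but it takes a genuinely different route from the paper's. The paper also invokes Vitali to reduce to pointwise convergence, but then splits into the three cases $t_n\le s$, $s\le s_n$, $s_n\le s\le t_n$ and handles the troublesome dependence on the left parameter by a metric estimate: writing, e.g., $\varphi_{s,t_n}=\varphi_{s_n,t_n}\circ\varphi_{s,s_n}$ and using the Schwarz--Pick contraction of holomorphic self-maps for the hyperbolic distance, it gets
$\rho_{\mathbb{D}}(\varphi_{s_n,t_n}(z),\varphi_{s,t_n}(z))\le\rho_{\mathbb{D}}(z,\varphi_{s,s_n}(z))$,
so everything reduces to the same EF3 ingredient you isolate in your first step ($\varphi_{r_n,r_n'}(z)\to z$). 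You instead avoid the hyperbolic metric altogether: you treat the left-parameter continuity by normality (Montel), uniqueness of sublimits, and an elementary composition lemma applied to the identity $\varphi_{s,u}=\varphi_{s_n,u}\circ\varphi_{s,s_n}$, and then assemble the two-variable statement through the factorization $\varphi_{s_n,t_n}=\varphi_{u,t_n}\circ\varphi_{s_n,u}$ for an intermediate $u\in(s,t)$. Both arguments are sound and rest on the same two pillars (EF2/EF3 plus a compactness principle); the paper's version is shorter and quantitative, giving a direct pointwise bound, and its use of the invariant distance is what generalizes to complete hyperbolic manifolds via the Kobayashi distance (as the authors exploit in the sequel), whereas your version is softer and more elementary, needing only Montel/Vitali, uniform continuity on compacta, and a sublimit-uniqueness argument in place of any metric contraction. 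Two small points worth making explicit if you write this up: in the composition lemma you should note that $f_n(K)$ lies in a fixed compact subset of $\mathbb{D}$ for $n$ large (which your splitting already uses), and in the case $s_n>s$ you should observe that the sublimit $\psi$ is automatically holomorphic on $\mathbb{D}$ so the lemma applies to it even before you know it is a self-map.
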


\begin{proof}
Let $\{s_{n}\},$ $\{t_{n}\}$ be two sequences in $[0,+\infty)$
with $0\leq s_{n}\leq t_{n},$ $s_{n}\rightarrow s$ and
$t_{n}\rightarrow t$. Since the
set $\{\varphi_{u,v}:0\leq u\leq v\}$ is bounded in $\mathrm{Hol}%
(\mathbb{D},\mathbb{C}),$ by Vitali's theorem, it is enough to
show that $\lim_{n\to \infty
}\varphi_{s_{n},t_{n}}(z)=\varphi_{s,t}(z)$ for all fixed $z$
in the unit disc. Fix a point $z\in\mathbb{D}$. In order to
obtain the result, we may (and we do) assume that the sequences
$\{s_{n}\}$ and $\{t_{n}\}$ are in one of the following three
cases:

Case I:  $s_{n}\leq t_{n}\leq s$ for all $n.$

Case II: $s\leq s_{n}$ for all $n;$

Case III: $s_{n}\leq s\leq t_{n}$ for all $n;$

In case I, we have that $s=t$. Therefore, using EF3, we take
the corresponding function $k_{z,t}\in L^{d}([0,T],\mathbb{R})$
and
\begin{align*}
\left\vert \varphi_{s_{n},t_{n}}(z)-\varphi_{s,t}(z)\right\vert
& =\left\vert
\varphi_{s_{n},t_{n}}(z)-z\right\vert =\left\vert \varphi_{s_{n},t_{n}%
}(z)-\varphi_{s_{n},s_{n}}(z)\right\vert \\
&
\leq\int_{s_{n}}^{t_{n}}k_{z,t}(\xi)d\xi\overset{n}{\longrightarrow}0,
\end{align*}
where the last limit is zero because the measure of the
interval $[s_{n},t_{n}]$ tends to zero as $n$ goes to $\infty.$

If we are in case II, then
\begin{align*}
\rho_{\mathbb{D}}(\varphi_{s_{n},t_{n}}(z),\varphi_{s,t}(z))  & \leq
\rho_{\mathbb{D}}(\varphi_{s_{n},t_{n}}(z),\varphi_{s,t_{n}}(z))+\rho
_{\mathbb{D}}(\varphi_{s,t_{n}}(z),\varphi_{s,t}(z))\\
& =\rho_{\mathbb{D}}(\varphi_{s_{n},t_{n}}(z),\varphi_{s_{n},t_{n}}%
(\varphi_{s,s_{n}}(z)))+\rho_{\mathbb{D}}(\varphi_{s,t_{n}}(z),\varphi
_{s,t}(z))\\
& \leq\rho_{\mathbb{D}}(z,\varphi_{s,s_{n}}(z))+\rho_{\mathbb{D}}%
(\varphi_{s,t_{n}}(z),\varphi_{s,t}(z)),
\end{align*}
while in case III,
\begin{align*}
\rho_{\mathbb{D}}(\varphi_{s_{n},t_{n}}(z),\varphi_{s,t}(z))  & \leq
\rho_{\mathbb{D}}(\varphi_{s_{n},t_{n}}(z),\varphi_{s,t_{n}}(z))+\rho
_{\mathbb{D}}(\varphi_{s,t_{n}}(z),\varphi_{s,t}(z))\\
& =\rho_{\mathbb{D}}(\varphi_{s,t_{n}}(\varphi_{s_{n},s}(z)),\varphi_{s,t_{n}%
}(z))+\rho_{\mathbb{D}}(\varphi_{s,t_{n}}(z),\varphi_{s,t}(z))\\
& \leq\rho_{\mathbb{D}}(\varphi_{s_{n},s}(z),z)+\rho_{\mathbb{D}}%
(\varphi_{s,t_{n}}(z),\varphi_{s,t}(z)).
\end{align*}

Therefore, bearing in mind that $\varphi_{s,t}(z)$ and $z$
belong to $\mathbb{D}$, to end up the proof it is enough to
show that the sequence $\{\varphi_{s,t_{n}}(z)\}$ converges to
$\varphi_{s,t}(z)$ and $\{\varphi_{s_{n},s}(z)\}$ (or
$\{\varphi_{s,s_{n}}(z)$\}) converges to $z$ as $n$ goes to
$+\infty.$

Let $T>\sup_{n}t_{n}.$ By the very definition of evolution
family, there exists a non-negative function $k_{z,T}\in
L^{d}([0,T],\mathbb{R})$ such that
\[
|\varphi_{r,u}(z)-\varphi_{r,v}(z)|\leq\int_{u}^{v}k_{z,T}(\xi)d\xi
\]
for all $0\leq r\leq u\leq v\leq T.$

Since $k_{z,T}\in L^{1}([0,T],\mathbb{R}),$ we have
\[
|\varphi_{s,t_{n}}(z)-\varphi_{s,t}(z)|\leq\int_{t}^{t_{n}}k_{z,T}(\xi
)d\xi\underset{n\rightarrow\infty}{\longrightarrow}0,
\]
if $s\leq s_{n}$ for all $n,$%
\[
|\varphi_{s,s_{n}}(z)-z|=|\varphi_{s,s_{n}}(z)-\varphi_{s,s}(z)|\leq\int
_{s}^{s_{n}}k_{z,T}(\xi)d\xi\underset{n\rightarrow\infty}{\longrightarrow}0,
\]
and, if $s_{n}\leq s$ for all $n,$
\[
|\varphi_{s_{n},s}(z)-z|=|\varphi_{s_{n},s}(z)-\varphi_{s_n,s_n}(z)|\leq
\int_{s_{n}}^{s}k_{z,T}(\xi)d\xi\underset{n\rightarrow\infty}{\longrightarrow
}0.
\]

\end{proof}

\begin{lemma}
\label{EF-acotacion} Let $(\varphi_{s,t})$ be an evolution
family of order $d\geq 1$ in the unit disc  $\mathbb{D}.$ Then
for each $0<T<+\infty$ and $0<r<1,$ there exists $R=R(r,T)<1$
such that
\[
|\varphi_{s,t}(z)|\leq R
\]
for all $0\leq s\leq t\leq T$ and $|z|\leq r.$
\end{lemma}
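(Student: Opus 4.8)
The plan is to use the semigroup property EF2 to reduce to a uniform bound on the ``final'' maps $\varphi_{s,T}$, and then invoke the Schwarz--Pick lemma together with the joint continuity from Proposition \ref{EF-continuidad}. First I would fix $0<T<+\infty$ and $0<r<1$, and write $z$ with $|z|\le r$. For $0\le s\le t\le T$ we have by EF2 that $\varphi_{s,T}=\varphi_{t,T}\circ\varphi_{s,t}$, so $\varphi_{s,t}(z)$ lies in the preimage $\varphi_{t,T}^{-1}$ of a point, which does not immediately help; instead the cleaner route is to observe that each $\varphi_{s,t}$ is a holomorphic self-map of $\D$ and hence $\rho_\D(\varphi_{s,t}(z),\varphi_{s,t}(0))\le \rho_\D(z,0)$. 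Thus it suffices to bound $|\varphi_{s,t}(0)|$ away from $1$, uniformly in $0\le s\le t\le T$: if $|\varphi_{s,t}(0)|\le \rho_0<1$ for all such $s,t$, then $\varphi_{s,t}(z)$ lies in the hyperbolic ball of radius $\rho_\D(0,r)$ around a point of Euclidean modulus $\le\rho_0$, which is contained in a fixed compact subset of $\D$, giving the desired $R=R(r,T)<1$.

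The remaining point is therefore the uniform bound on the orbit of the origin: the function $(s,t)\mapsto |\varphi_{s,t}(0)|$ must not approach $1$ on the compact triangle $\Delta_T:=\{(s,t): 0\le s\le t\le T\}$. By Proposition \ref{EF-continuidad} the map $(s,t)\mapsto\varphi_{s,t}\in\mathrm{Hol}(\D,\C)$ is jointly continuous on $\Delta_T$, hence so is the real-valued function $(s,t)\mapsto|\varphi_{s,t}(0)|$; since $\Delta_T$ is compact this function attains a maximum $\rho_0$. Because every $\varphi_{s,t}$ maps $\D$ into $\D$ we have $\rho_0\le 1$, and we must rule out $\rho_0=1$. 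If the maximum were $1$, it would be attained at some $(s_0,t_0)\in\Delta_T$, so $\varphi_{s_0,t_0}(0)\in\partial\D$; but a holomorphic self-map of $\D$ whose value at an interior point lies on $\partial\D$ is constant with that boundary value, contradicting $\varphi_{s_0,t_0}(\D)\subset\D$. Hence $\rho_0<1$.

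Putting these together: for $|z|\le r$ and $(s,t)\in\Delta_T$, write $a=\varphi_{s,t}(0)$ with $|a|\le\rho_0<1$; by Schwarz--Pick, $\varphi_{s,t}(z)$ belongs to the hyperbolic disc of center $a$ and radius $\rho_\D(0,r)$, i.e.\ to a Euclidean disc whose closure is contained in $\D$ and whose ``size'' depends only on $\rho_0$ and $r$. Explicitly one can take
\[
R=R(r,T):=\frac{\rho_0+r}{1+\rho_0 r}<1,
\]
since the hyperbolic ball of radius $\rho_\D(0,r)=\tfrac12\log\frac{1+r}{1-r}$ about any point $a$ with $|a|\le\rho_0$ is contained in $\{|w|\le (\rho_0+r)/(1+\rho_0 r)\}$ (this is just the triangle inequality for $\rho_\D$ applied to $0,a,\varphi_{s,t}(z)$, translated back to Euclidean moduli via the explicit formula for $\rho_\D$). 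The main obstacle is really only the uniform non-degeneracy of the orbit of $0$, and that is settled by compactness of $\Delta_T$ plus the already-established joint continuity; everything else is a direct application of the Schwarz--Pick lemma.
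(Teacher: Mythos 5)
Your proof is correct, and it takes a more direct route than the paper. You reduce everything to the orbit of the origin via the Schwarz--Pick contraction, and then get the uniform bound $\sup_{\Delta_T}|\varphi_{s,t}(0)|=\rho_0<1$ from the \emph{joint} continuity of $(s,t)\mapsto\varphi_{s,t}$ (Proposition \ref{EF-continuidad}) together with compactness of the parameter triangle; the explicit constant $R=(\rho_0+r)/(1+\rho_0 r)$ then comes from the pseudo-hyperbolic triangle inequality. The paper instead argues by contradiction with sequences: assuming $|\varphi_{s_n,t_n}(z_n)|\to 1$, it first uses the hyperbolic contraction to pass from $z_n$ to a fixed point $z_0$, and then uses the algebraic identity $\varphi_{0,t_n}=\varphi_{s_n,t_n}\circ\varphi_{0,s_n}$ (EF2) plus the contraction again to transfer the blow-up to the single trajectory $t\mapsto\varphi_{0,t}(z_0)$, whose continuity (a one-variable consequence of EF3) yields the contradiction. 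Your argument is shorter and gives a clean explicit $R$, but it consumes the full strength of the two-parameter continuity statement; the paper's sequence argument only needs continuity of one trajectory, which is why essentially the same argument can be repeated later (in the proof of Theorem \ref{Herglotz-implica-EF}) in a setting where joint continuity has not yet been established. One cosmetic remark: your appeal to the maximum principle at the point where the maximum $\rho_0$ would equal $1$ is unnecessary, since $\varphi_{s_0,t_0}(0)\in\mathbb{D}$ already forces $\rho_0=|\varphi_{s_0,t_0}(0)|<1$ once the maximum is attained.
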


\begin{proof}
Suppose that the lemma is not true. Then there exist three sequences
$\{z_{n}\},$ $\{s_{n}\},$ and $\{t_{n}\}$ such that $|z_{n}|\leq r,$ $z_{n}%
\rightarrow z_{0},$ $s_{n},t_{n}\in\lbrack0,T],$ $s_{n}\leq t_{n},$
$s_{n}\rightarrow s_{0},$ $t_{n}\rightarrow t_{0},$ and $|\varphi_{s_{n}%
,t_{n}}(z_{n})|\rightarrow1.$ Since the map $\varphi_{s_{n},t_{n}}$ is a
contraction for the hyperbolic metric, we have that $\rho_{\mathbb{D}}%
(\varphi_{s_{n},t_{n}}(z_{n}),\varphi_{s_{n},t_{n}}(z_{0}))\leq\rho
_{\mathbb{D}}(z_{n},z_{0})\rightarrow0.$ Then $|\varphi_{s_{n},t_{n}}%
(z_{0})|\rightarrow1.$ By Proposition \ref{BP-continuidad}, the map
$t\mapsto\varphi_{0,t}(z_{0})$ is continuous. Moreover,
\begin{align*}
\rho_{\mathbb{D}}(\varphi_{0,t_{n}}(z_{0}),\varphi_{s_{n},t_{n}}(z_{0}))  &
=\rho_{\mathbb{D}}(\varphi_{s_{n},t_{n}}(\varphi_{0,s_{n}}(z_{0}%
)),\varphi_{s_{n},t_{n}}(z_{0}))\\
& \leq\rho_{\mathbb{D}}(\varphi_{0,s_{n}}(z_{0}),z_{0})\rightarrow
\rho_{\mathbb{D}}(\varphi_{0,s}(z_{0}),z_{0})<+\infty.
\end{align*}
Again this implies that $|\varphi_{0,t_{n}}(z_{0})|\rightarrow1.$ But
$\varphi_{0,t_{n}}(z_{0})\rightarrow\varphi_{0,t}(z_{0})\in\mathbb{D}$. A contradiction.
\end{proof}

\begin{proposition}
\label{EF-continuidadabsoltuta} Let $(\varphi_{s,t})$ be an
evolution family of order $d\geq 1$ in the unit disc
$\mathbb{D}.$

\begin{enumerate}
\item For all $z\in\mathbb{D}$ and for all $s\geq0,$ the map $\lbrack
s,\infty)\ni t\mapsto\varphi_{s,t}(z)\in\mathbb{C}$ is locally
absolutely continuous, that is, for all $T>s,$ the map $\lbrack
s,T]\ni t\mapsto \varphi_{s,t}(z)\in\mathbb{C}$ is absolutely
continuous$.$

\item For all $z\in\mathbb{D}$ and for all $T>0,$ the map $\lbrack
0,T]\ni s\mapsto\varphi_{s,T}(z)\in\mathbb{C}$ is absolutely
continuous$.$
\end{enumerate}
\end{proposition}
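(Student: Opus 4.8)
The plan is to deduce both statements directly from condition EF3 together with the algebraic law EF2, using the standard characterization of absolute continuity via an $L^1$ control function. For part (1), fix $z\in\D$, $s\geq 0$ and $T>s$. By EF3 there is a non-negative $k_{z,T}\in L^d([0,T],\R)\subset L^1([0,T],\R)$ such that $|\varphi_{s,u}(z)-\varphi_{s,t}(z)|\leq\int_u^t k_{z,T}(\xi)\,d\xi$ for all $s\leq u\leq t\leq T$. This is exactly the statement that the function $t\mapsto\varphi_{s,t}(z)$ has modulus of continuity dominated by the indefinite integral of an $L^1$ function; hence for any finite collection of pairwise disjoint subintervals $(u_j,t_j)\subset[s,T]$ one gets $\sum_j|\varphi_{s,t_j}(z)-\varphi_{s,u_j}(z)|\leq\int_{\bigcup_j(u_j,t_j)}k_{z,T}(\xi)\,d\xi$, which is small once the total length $\sum_j(t_j-u_j)$ is small, by absolute continuity of the Lebesgue integral of $k_{z,T}$. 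This gives absolute continuity on $[s,T]$, and since $T$ was arbitrary, local absolute continuity on $[s,\infty)$.

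For part (2) the situation is only slightly less immediate because EF3 controls variation in the second parameter, not the first. Here I would use EF2 to rewrite the difference quotient in $s$ in terms of a difference quotient in the second variable of another map in the family, and then use that $\varphi_{u,T}$ is a holomorphic self-map of $\D$, hence $1$-Lipschitz for the hyperbolic distance (Schwarz--Pick). Concretely, fix $z\in\D$ and $T>0$. For $0\leq s\leq s'\leq T$ write, using EF2, $\varphi_{s,T}(z)=\varphi_{s',T}(\varphi_{s,s'}(z))$, so that
\[
\rho_\D\big(\varphi_{s,T}(z),\varphi_{s',T}(z)\big)=\rho_\D\big(\varphi_{s',T}(\varphi_{s,s'}(z)),\varphi_{s',T}(z)\big)\leq\rho_\D\big(\varphi_{s,s'}(z),z\big).
\]
Now by Lemma \ref{EF-acotacion} there is $R=R(|z|,T)<1$ with $|\varphi_{s,s'}(z)|\leq R$ and also (taking $r$ slightly larger if needed, or noting $z$ itself lies in $\overline{\D}_R$ after enlarging $R$) all the points involved lie in the compact disc $\overline{\D}_R$; on $\overline{\D}_R$ the hyperbolic distance is comparable to the Euclidean one, i.e. there is a constant $C=C(R)$ with $|a-b|\leq C\,\rho_\D(a,b)$ for $a,b\in\overline{\D}_R$. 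Combining, and applying EF3 to the family member $\varphi_{s,\cdot}$ at the point $z$ on the interval $[s,s']$ (again with the control function $k_{z,T}$), we obtain
\[
|\varphi_{s,T}(z)-\varphi_{s',T}(z)|\leq C\,\rho_\D\big(\varphi_{s,s'}(z),z\big)\leq C\,\rho_\D^{\overline{\D}_R}\text{-comparison applied to }|\varphi_{s,s'}(z)-z|\leq C'\int_s^{s'}k_{z,T}(\xi)\,d\xi,
\]
where $C'$ depends only on $R$, hence only on $|z|$ and $T$. As before, for finitely many disjoint subintervals $(s_j,s_j')$ of $[0,T]$ the sum $\sum_j|\varphi_{s_j,T}(z)-\varphi_{s_j',T}(z)|$ is bounded by $C'\int_{\bigcup_j(s_j,s_j')}k_{z,T}$, which is small when the total length is small; this is precisely absolute continuity of $s\mapsto\varphi_{s,T}(z)$ on $[0,T]$.

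The only genuine subtlety, and the step I would be most careful about, is the passage from the hyperbolic estimate to a Euclidean one in part (2): one must guarantee a uniform bound keeping $\varphi_{s,s'}(z)$ (and $z$) inside a fixed compact subset of $\D$ as $s,s'$ range over $[0,T]$, which is exactly what Lemma \ref{EF-acotacion} provides, and then invoke the elementary fact that on a compact subset of $\D$ the hyperbolic and Euclidean metrics are bi-Lipschitz equivalent. Everything else is a routine unwinding of the definition of absolute continuity together with the absolute continuity of the Lebesgue integral of an $L^1$ function. I would also remark that the same argument shows the two maps are in fact $W^{1,d}_{\mathrm{loc}}$, not merely absolutely continuous, since the controlling function $k_{z,T}$ lies in $L^d$; but for the statement as given, absolute continuity suffices.
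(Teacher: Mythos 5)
Your proposal is correct. Part (1) is exactly the paper's argument: EF3 dominates the increments of $t\mapsto\varphi_{s,t}(z)$ by the integral of an $L^1$ function, and absolute continuity follows. For part (2) you take a genuinely different route from the paper in one step. Both arguments begin the same way, rewriting via EF2 the difference $\varphi_{s,T}(z)-\varphi_{s',T}(z)$ as $\varphi_{s',T}(z)-\varphi_{s',T}(\varphi_{s,s'}(z))$ and then reducing to $|z-\varphi_{s,s'}(z)|$, which EF3 controls; the difference is how one gets the Lipschitz-type control of $\varphi_{s',T}$ between the two nearby arguments. The paper uses the Cauchy integral formula on the circle of radius $(R+1)/2$, where $R$ comes from Lemma \ref{EF-acotacion}, obtaining the explicit bound $\frac{4}{(1-R)^2}|z-\varphi_{s,s'}(z)|$. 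You instead use the Schwarz--Pick contraction of the hyperbolic metric under $\varphi_{s',T}$, together with the bi-Lipschitz equivalence of $\rho_\D$ and the Euclidean metric on the compact disc $\overline{\D(0,R)}$ supplied by Lemma \ref{EF-acotacion} (note that for the first comparison, $|a-b|\lesssim\rho_\D(a,b)$, no compactness is even needed, since the hyperbolic density is bounded below on $\D$; compactness is only required for the reverse comparison, which you correctly identify as the delicate point). Both approaches are sound and of comparable length; the paper's Cauchy estimate yields an explicit constant with no metric discussion, while your hyperbolic-contraction argument is the one that survives in higher-dimensional or manifold settings where Cauchy estimates are less convenient but the invariant-metric contraction still holds. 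Your closing remark that the maps are in fact $W^{1,d}_{\mathrm{loc}}$ is also consistent with how the order $d$ is used later in the paper.
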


\begin{proof}
(1) Let us fix $z\in\mathbb{D}$ and two non-negative numbers
$0\leq s<T.$ Then there exists a non-negative function
$k_{z,T}\in L^{d}([0,T],\mathbb{R}) $ such that
\[
|\varphi_{s,u}(z)-\varphi_{s,t}(z)|\leq\int_{u}^{t}k_{z,T}(\xi)d\xi
\]
for all $0\leq s\leq u\leq t\leq T.$ Since $k_{z,T}\in
L^{d}([0,T],\mathbb{R}),$ the map $\lbrack 0,T]\ni
t\mapsto\int_{0}^{t}k_{z,T}(\xi)d\xi$ is absolutely continuous
and this clearly implies that the map $\lbrack s,T]\ni
t\mapsto\varphi _{s,t}(z)$ is absolutely continuous.

(2) Fix $z\in\mathbb{D}$ and $T>0.$ By Lemma \ref{EF-acotacion}, there is
$R=R(z,T)<1$ such that
\[
|\varphi_{s,t}(z)|\leq R
\]
for all $0\leq s\leq t\leq T.$ Take $r=(R+1)/2.$

By Cauchy integral's formula, if $0\leq a<b\leq T,$ we have
\begin{align*}
|\varphi_{b,T}(z)-\varphi_{a,T}(z)|  & =|\varphi_{b,T}(z)-\varphi
_{b,T}(\varphi_{a,b}(z))|\\
& =\left\vert \frac{1}{2\pi i}%
{\displaystyle\int\nolimits_{C(0,r)^{+}}}
\frac{\varphi_{b,T}(\xi)}{\xi-z}d\xi-\frac{1}{2\pi i}%
{\displaystyle\int\nolimits_{C(0,r)^{+}}}
\frac{\varphi_{b,T}(\xi)}{\xi-\varphi_{a,b}(z)}d\xi\right\vert \\
& =\left\vert \frac{1}{2\pi}%
{\displaystyle\int\nolimits_{C(0,r)^{+}}}
\varphi_{b,T}(\xi)\frac{z-\varphi_{a,b}(z)}{(\xi-z)(\xi-\varphi_{a,b}(z))}%
d\xi\right\vert \\
& \leq r\frac{|z-\varphi_{a,b}(z)|}{(r-|z|)(r-R)}\leq\frac{4}{(1-R)^{2}%
}|z-\varphi_{a,b}(z)|.
\end{align*}
Now, let $k_{z,T}\in L^{d}([0,T],\mathbb{R})$ be a non-negative
function such that
\[
|\varphi_{s,u}(z)-\varphi_{s,t}(z)|\leq\int_{u}^{t}k_{z,T}(\xi)d\xi
\]
for all $0\leq s\leq u\leq t\leq T.$ We have
\[
|\varphi_{b,T}(z)-\varphi_{a,T}(z)|\leq\frac{4}{(1-R)^{2}}|\varphi
_{a,a}(z)-\varphi_{a,b}(z)|\leq\frac{4}{(1-R)^{2}}\int_{a}^{b}k_{z,T}(\xi
)d\xi.
\]
Again, since $k_{z,T}\in L^{1}([0,T],\mathbb{R}),$ this implies
that the map $s\in\lbrack0,T]\mapsto\varphi_{s,T}(z)$ is
absolutely continuous.
\end{proof}

\section{Weak holomorphic vector fields and Herglotz vector fields}

\begin{definition}
\label{Definicion-VF} Let $d\in [1,+\infty]$. A {\sl weak
holomorphic vector field of order $d$} on the unit disc
$\mathbb{D}$ is a function
$G:\mathbb{D}\times\lbrack0,+\infty)\rightarrow \mathbb{C}$
with the following properties:

\begin{enumerate}
\item[WHVF1.] For all $z\in\mathbb{D},$ the function $\lbrack
0,+\infty)\ni t\mapsto G(z,t)$ is measurable;

\item[WHVF2.] For all $t\in\lbrack0,+\infty),$ the function $
\mathbb{D}\ni z\mapsto G(z,t)$ is holomorphic;

\item[WHVF3.] For any compact set $K\subset\mathbb{D}$ and for all $T>0$ there
exists a non-negative function $k_{K,T}\in
L^{d}([0,T],\mathbb{R})$ such that
\[
|G(z,t)|\leq k_{K,T}(t)
\]
for all $z\in K$ and for almost every $t\in\lbrack0,T].$
\end{enumerate}
\end{definition}

\begin{lemma}
\label{WHVF4} Let $G$ be a weak holomorphic vector field of
order $d\geq 1$ on the unit disc $\mathbb{D}$. Then for any
compact set $K\subset\mathbb{D}$ and for all $T>0,
$ there exists a non-negative function $\widehat{k}_{K,T}\in L^{d}%
([0,T],\mathbb{R})$ such that
\[
|G(z,t)-G(w,t)|\leq\widehat{k}_{K,T}(t)|z-w|
\]
for all $z,w\in K$ and for almost every $t\in\lbrack0,T].$
\end{lemma}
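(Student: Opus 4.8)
The idea is to enlarge $K$ slightly and combine WHVF3 on the larger set with a Cauchy estimate. Fix a compact set $K\subset\D$ and $T>0$. Put $r_0:=\max_{z\in K}|z|<1$ and choose $\rho$ with $r_0<\rho<1$; let $K':=\{\xi\in\C:|\xi|\le\rho\}$, which is a compact subset of $\D$ containing $K$ in its interior. By WHVF3 applied to $K'$ there is a non-negative function $k_{K',T}\in L^{d}([0,T],\R)$ and a null set $E\subset[0,T]$ such that $|G(\xi,t)|\le k_{K',T}(t)$ for \emph{all} $\xi$ with $|\xi|\le\rho$ and all $t\in[0,T]\setminus E$. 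The key point is that this exceptional set $E$ depends only on $K'$ (hence only on $K$ and $T$), not on the point $\xi$; this is exactly what the ``for all $z\in K$ and for almost every $t$'' formulation of WHVF3 provides.

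Now fix $t\in[0,T]\setminus E$ and $z,w\in K$. Since $z\mapsto G(z,t)$ is holomorphic on $\D$ by WHVF2, Cauchy's integral formula on the circle $C(0,\rho)$ gives
\[
G(z,t)-G(w,t)=\frac{1}{2\pi i}\int_{C(0,\rho)^{+}}G(\xi,t)\left(\frac{1}{\xi-z}-\frac{1}{\xi-w}\right)d\xi=\frac{z-w}{2\pi i}\int_{C(0,\rho)^{+}}\frac{G(\xi,t)}{(\xi-z)(\xi-w)}\,d\xi.
\]
On the circle $|\xi|=\rho$ we have $|\xi-z|\ge\rho-r_0$ and $|\xi-w|\ge\rho-r_0$, and $|G(\xi,t)|\le k_{K',T}(t)$; since the circle has length $2\pi\rho$, we obtain
\[
|G(z,t)-G(w,t)|\le\frac{|z-w|}{2\pi}\cdot 2\pi\rho\cdot\frac{k_{K',T}(t)}{(\rho-r_0)^{2}}=\frac{\rho}{(\rho-r_0)^{2}}\,k_{K',T}(t)\,|z-w|.
\]

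It therefore suffices to set $\widehat{k}_{K,T}(t):=\dfrac{\rho}{(\rho-r_0)^{2}}\,k_{K',T}(t)$, which is non-negative and, being a constant multiple of $k_{K',T}$, lies in $L^{d}([0,T],\R)$; the displayed inequality of the lemma holds for all $z,w\in K$ and all $t\in[0,T]\setminus E$, i.e. for almost every $t\in[0,T]$. There is no real obstacle here: the only point requiring a moment's care is that the null set in WHVF3 can be chosen independently of the point in the compact set, which is built into the statement of WHVF3, so the Cauchy estimate can be applied on a single full-measure set of times. $\qquad\blacksquare$
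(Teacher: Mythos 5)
Your proof is correct and follows essentially the same route as the paper: enlarge $K$ to a slightly larger closed disc, invoke WHVF3 there, and apply the Cauchy integral formula on the enclosing circle to get a Lipschitz estimate with a constant multiple of the WHVF3 bound (the paper simply makes the specific choice $\rho=(r+1)/2$ and obtains the constant $4/(1-r)^2$). Your remark about the null set in WHVF3 being independent of the point is consistent with how the paper reads and uses that axiom.
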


\begin{proof}
Fix a compact set $K\subset\mathbb{D}$ and $T>0.$ Take $0<r<1$
such that $K\subset \D(0,r):=\{\zeta\in\C: |\zeta|<r\}.$ Let
$A:=\overline{\D(0,(r+1)/2)}.$ By the very definition of weak
holomorphic vector field, there exists a non-negative function
$k_{A,T}\in L^{d}([0,T],\mathbb{R})$ such that
\[
|G(z,t)|\leq k_{A,T}(t)
\]
for all $z\in A$ and for almost every $t\in\lbrack0,T].$ Since
the function $\mathbb{D}\ni z\mapsto G(z,t)$ is holomorphic,
taking $z,w\in K,$ we have
\begin{align*}
|G(z,t)-G(w,t)|  & =\left\vert \frac{1}{2\pi i}%
{\displaystyle\int\nolimits_{C(0,(r+1)/2)^{+}}}
\frac{G(\xi,t)}{\xi-z}d\xi-\frac{1}{2\pi i}%
{\displaystyle\int\nolimits_{C(0,(r+1)/2)^{+}}}
\frac{G(\xi,t)}{\xi-w}d\xi\right\vert \\
& =\frac{1}{2\pi}\left\vert
{\displaystyle\int\nolimits_{C(0,(r+1)/2)^{+}}}
G(\xi,t)\frac{z-w}{(\xi-z)(\xi-w)}d\xi\right\vert \\
& \leq\frac{1}{2\pi}%
{\displaystyle\int\nolimits_{C(0,(r+1)/2)^+}}
\left\vert G(\xi,t)\right\vert \frac{|z-w|}{|(\xi-z)(\xi-w)|}|d\xi|\\
& \leq\frac{1}{2\pi}%
{\displaystyle\int\nolimits_{C(0,(r+1)/2)^+}}
k_{A,T}(t)\frac{4|z-w|}{(1-r)^{2}}|d\xi|\leq4\frac{k_{A,T}(t)}{(1-r)^{2}%
}|z-w|.
\end{align*}
Thus, the result follows by choosing
$\widehat{k}_{K,T}:=4\frac{k_{A,T}}{(1-r)^{2}}.$
\end{proof}

By the Carath\'{e}odory theory of ODE's (see, for example,
\cite{Coddington-Levison}), it follows from the above lemma that if $G$ is a
weak holomorphic vector field on $\mathbb{D},$ then for any $(z,s)\in
\mathbb{D}\times\lbrack0,+\infty),$ there exist a unique $I(z,s)>s$ and a
function $x:[s,I(z,s))\rightarrow\mathbb{D}$ such that

\begin{enumerate}
\item $x$ is locally absolutely continuous in $[s,I(z,s))$, that is, $x$ is
absolutely continuous in $[s,T]$ for all $s<T<I(z,s);$

\item $x$ is the solution to the following problem:%
\[
\left\{
\begin{array}
[c]{l}%
\overset{\bullet}{x}(t)=G(x(t),t)\\
x(s)=z
\end{array}
\right.
\]
for almost all $t\in\lbrack s,I(z,s))$ with respect to the
Lebesgue measure.

\item The interval $[s,I(z,s))$ is maximal. Namely, if
 $y:[s,I)\rightarrow \D$ is a locally absolutely continuous function satisfying
\[
\left\{
\begin{array}
[c]{l}%
\overset{\bullet}{y}(t)=G(y(t),t)\\
y(s)=z
\end{array}
\right.
\]
for almost all $t\in\lbrack s,I)$, then $I\leq I(z,s)$ and
$x(t)=y(t)$ on $[s,I).$
\end{enumerate}

Such a map $x$ is known as the \textit{positive trajectory} of
the vector field $G$ at the pair $(z,s).$ The number $I(z,s)$
is known as the \textit{escaping time} for the couple $(z,s).$
We say that the weak holomorphic vector field is
\textit{semi-complete} if $I(z,s)=+\infty$ for all
$(z,s)\in\mathbb{D}\times\lbrack0,+\infty).$

\begin{definition}
Let $G(z,t)$ be a weak holomorphic vector field   of order
$d\in [1,+\infty]$ on the unit disc $\mathbb{D}$. We say that
$G$ is a {\sl (generalized) Herglotz vector field} (of order
$d$) if for almost every $t\in [0,+\infty)$ it follows
$G(\cdot, t)\in \mathrm{Gen}(\mathbb{D})$.
\end{definition}

Herglotz vector fields are always semicomplete:

\begin{theorem}\label{semicompletezza}
Let $G:\D\times[0,+\infty)\to \C$ be a Herglotz vector field of
order $d\in [1,+\infty)$. Then $G$ is semicomplete.
\end{theorem}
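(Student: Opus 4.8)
The plan is to show that the positive trajectory $x(t) = x(t;z,s)$ of $G$ starting at $(z,s)$ cannot leave $\D$ in finite time, i.e.\ that $I(z,s) = +\infty$. The natural quantity to control is the hyperbolic distance from a fixed base point. Fix $(z,s) \in \D \times [0,+\infty)$ and let $x(t)$ be the positive trajectory with escaping time $I = I(z,s)$. Choose an auxiliary point $w_0 \in \D$ with $w_0 \neq z$ and consider the function $t \mapsto \rho_\D(x(t), w_0)$. The idea is that for almost every $t$, $G(\cdot,t) \in \mathrm{Gen}(\D)$, and so, by the result of \cite{Bracci-Contreras-Diaz-JEMS} quoted in the introduction, $(d\rho_\D)_{(x(t),w_0)}(G(x(t),t), 0) \leq 0$ whenever $x(t) \neq w_0$; this says precisely that $\rho_\D(x(t),w_0)$ is non-increasing in $t$ along the trajectory (to be careful, one should integrate this infinitesimal inequality, using absolute continuity of $t \mapsto x(t)$ on compact subintervals and the fact that the hyperbolic density is smooth away from the diagonal). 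Hence $\rho_\D(x(t), w_0) \leq \rho_\D(z, w_0) =: \rho_0$ for all $t \in [s, I)$.

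Next I would use this a priori bound to derive a contradiction with the assumption $I < +\infty$. If $I < +\infty$, then since $\rho_\D(x(t),w_0) \leq \rho_0$ for all $t \in [s,I)$, the trajectory stays in the compact set $K := \overline{B_{\rho_\D}(w_0, \rho_0)} \subset \D$. Pick $T > I$. By WHVF3 applied to this compact $K$, there is a non-negative $k_{K,T} \in L^d([0,T],\R) \subset L^1([0,T],\R)$ with $|G(\zeta,t)| \leq k_{K,T}(t)$ for all $\zeta \in K$ and a.e.\ $t \in [0,T]$. Then for $s \leq t' < t'' < I$,
\[
|x(t'') - x(t')| = \left| \int_{t'}^{t''} G(x(\xi),\xi)\, d\xi \right| \leq \int_{t'}^{t''} k_{K,T}(\xi)\, d\xi,
\]
and since $k_{K,T} \in L^1$, the right-hand side tends to $0$ as $t', t'' \to I^-$. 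By the Cauchy criterion, $\lim_{t \to I^-} x(t) =: x^* $ exists, and since $x(t) \in K$ for all $t$ and $K$ is compact in $\D$, we get $x^* \in \D$. But then one may restart the ODE at the point $(x^*, I)$: by Carath\'eodory's theory there is a solution on some $[I, I + \varepsilon)$, and concatenating it with $x$ gives a locally absolutely continuous solution to the Cauchy problem on $[s, I+\varepsilon)$, contradicting the maximality of $I$. Therefore $I = +\infty$, and since $(z,s)$ was arbitrary, $G$ is semicomplete.

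The main obstacle is the rigorous justification of the monotonicity of $t \mapsto \rho_\D(x(t),w_0)$. The pointwise-in-$t$ inequality $(d\rho_\D)_{(x(t),w_0)}(G(x(t),t),0) \leq 0$ holds only for a.e.\ $t$ (those $t$ for which $G(\cdot,t)$ is an infinitesimal generator) and only where $x(t) \neq w_0$; one must combine this with the chain rule for the composition of the locally Lipschitz-in-its-arguments function $\rho_\D(\cdot, w_0)$ (smooth off the diagonal) with the absolutely continuous curve $x(\cdot)$, handling possible coincidences $x(t) = w_0$ separately (e.g.\ by noting such $t$ form a closed set and working on its complement, or by a limiting argument with a perturbed base point). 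A clean alternative, avoiding differential inequalities, is to use the semigroup/Euler-scheme characterization: approximate $G(\cdot,t)$ over a short interval by $\varphi - \mathrm{id}$ for a self-map $\varphi$ of $\D$ (recalling $\varphi - \mathrm{id} \in \mathrm{Gen}(\D)$ and that generators form a closed convex cone), exploit that self-maps are $\rho_\D$-contractions, and pass to the limit; but the differential-inequality route is the most direct given the tools quoted in the introduction. Either way, once the a priori hyperbolic bound is in hand, the completion of the argument via WHVF3 and maximality is routine.
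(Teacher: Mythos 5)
There is a genuine gap at the very first step of your argument: the inequality $(d\rho_\D)_{(x(t),w_0)}\bigl(G(x(t),t),0\bigr)\leq 0$ is not what the quoted result of \cite{Bracci-Contreras-Diaz-JEMS} gives, and it is false in general. That result controls the differential of $\rho_\D$ along the pair of vectors $\bigl(G(z,t),G(w,t)\bigr)$, i.e.\ along \emph{two trajectories of the same field}; it does not allow you to freeze the second point and replace $G(w_0,t)$ by the zero vector. Concretely, take the autonomous generator $G(z)=1-z^2=(1-z)(1-\overline{1}z)\frac{1+z}{1-z}$, whose Denjoy--Wolff point is $\tau=1$: for $w_0=0$ and a trajectory starting at $r\in(0,1)$ one has $G(r)=1-r^2>0$, so $\rho_\D(\phi_t(r),0)=\frac12\log\frac{1+\phi_t(r)}{1-\phi_t(r)}$ is strictly increasing and unbounded. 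Hence $t\mapsto\rho_\D(x(t),w_0)$ need not be non-increasing, your a priori bound $\rho_\D(x(t),w_0)\leq\rho_0$ fails, and with it the claim that the trajectory stays in a compact subset of $\D$ up to time $I$. Everything after that (Cauchy criterion via WHVF3, restarting at $x^*$, contradiction with maximality) is correct reasoning, but it has nothing to stand on.

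When the contraction estimate is applied correctly, to two trajectories $\phi_{s,z}$ and $\phi_{s,w}$ of the same Herglotz field, it yields $\rho_\D(\phi_{s,z}(t),\phi_{s,w}(t))\leq\rho_\D(z,w)$, and from this one only gets that all escaping times $I(\cdot,s)$ coincide (if one trajectory reached $\de\D$ in finite time while the other stayed inside, the hyperbolic distance would blow up). This does not exclude that all trajectories escape together at a common finite time, so a further quantitative ingredient is needed. The paper supplies it by a uniform local-existence step: using WHVF3 together with the Lipschitz bound of Lemma \ref{WHVF4} on a fixed disc $\{|z|\leq r\}$, a Picard iteration produces a single $\delta>0$ with $I(0,s)\geq s+\delta$ for every $s$ below the putative common escaping time, and combining this with $I(0,s)=I(0,0)$ forces $I=+\infty$. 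Some replacement of this kind is indispensable in your proof; the Euler-scheme alternative you mention is not developed enough to provide it.
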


\begin{proof}
Denote by $\phi_{s,z}$ the positive trajectory associated with
the Cauchy problem
\[
\left\{
\begin{array}
[c]{l}%
\overset{\bullet}{x}(t)=G(x(t),t)\text{ }\\
x(s)=z,
\end{array}
\right.  \text{ }%
\]
and let $I(z,s)$ be the corresponding escaping time. We have to
show that
\begin{equation}\label{forever}
I(z,s)=+\infty \quad \hbox{for all $z\in \D$ and $s\geq 0$}.
\end{equation}

In order to prove \eqref{forever}, we first show that for all
$z,w\in\mathbb{D}$ and  $0\leq s\leq t<\min \{I(z,s),I(w,s)\}$
\begin{equation}\label{claim}
\rho_{\mathbb{D}}(\phi_{s,z}(t),\phi_{s,w}(t))\leq\rho_{\mathbb{D}}(z,w).
\end{equation}
Assume that $I(z,s)\leq I(w,s)$ and consider the function
$h(t):=\rho_{\mathbb{D}}(\phi_{s,z}(t),\phi_{s,w}(t))$ defined
for  $t\in\lbrack s,I(z,s))$. By Caratheodory's ODE's theory,
such a function is absolutely continuous and differentiable
almost everywhere.

On the other hand, by definition of Herglotz vector field,
fixed a point $t$,  the map $\mathbb{D}\ni z\mapsto G(z,t)$ is
the infinitesimal generator of a semigroup of holomorphic
self-maps of the unit disc for almost every $t\in [0,+\infty)$.
Therefore, by \cite[Thm. 0.2] {Bracci-Contreras-Diaz-JEMS} for
almost every $t\in [0,+\infty)$
\[
\left(  d\rho_{\mathbb{D}}\right)  _{\left(  \phi_{s,z}(t),\phi_{s,w}%
(t)\right)  }\left( G(\phi_{s,z}(t),t),G(\phi
_{s,w}(t),t)\right) \leq0.
\]
Hence, for almost every $t\geq 0$,
\begin{align*}
\overset{\bullet}{h}(t)  & =\left(  d\rho_{\mathbb{D}}\right)
_{\left(
\phi_{s,z}(t),\phi_{s,w}(t)\right)  }\left(  \overset{\bullet}{\phi_{s,z}%
}(t),\overset{\bullet}{\phi_{s,w}}(t)\right) \\
& =\left(  d\rho_{\mathbb{D}}\right)  _{\left(
\phi_{s,z}(t),\phi _{s,w}(t)\right)  }\left(
G(\phi_{s,z}(t),t),G(\phi _{s,w}(t),t)\right) \leq0.
\end{align*}
Thus, $h(t)\leq h(s)$ for all $t\in\lbrack s,I(z,s))$, proving
\eqref{claim}.

Now,  we prove that
\begin{equation}\label{step1}
I(z,s)=I(w,s)\quad \hbox{for all $z,w\in\mathbb{D}$ and
$s\geq0.$}
\end{equation}
Fix $s\geq0.$ Suppose that there are two points $z,w$ such that
$I(z,s)<I(w,s).$ Thus, letting $t\to I(z,s)$, we have
$\phi_{s,z}(t)\to \de \D$, while $\phi_{s,w}(t)$ stays compact
inside $\D$. In particular,
$\rho_{\mathbb{D}}(\phi_{s,z}(t),\phi_{s,w}(t))\to \infty$,
which contradicts  \eqref{claim}. Thus $I(z,s)\geq I(w,s).$
Swapping the role of $z,w$ in the previous argument, we have
\eqref{step1}.

Next, let $I=I(0,0)$.  We prove that
\begin{equation}\label{step2}
    I(0,s)=I \quad \hbox{for all $s<I.$}
\end{equation}
Let $s<I$ and $z=\phi_{0,0}(s)\in \D$. Take
$s<t<\min\{I,I(0,s)\}.$ By \eqref{step1}, $I(z,s)=I(0,s)$ so
$\phi_{s,z}(t)$ is well-defined and belongs to $\D$. Therefore,
by uniqueness of solutions of ODE's it follows that
\[
\phi_{s,z}(t)=\phi_{s,\phi_{0,0}(s)}(t)=\phi_{0,0}(t).
\]
Moreover, by \eqref{claim},
\begin{equation*}
\rho_{\mathbb{D}}(\phi_{s,0}(t),\phi_{0,0}(t))  =\rho_{\mathbb{D}}%
(\phi_{s,0}(t),\phi_{s,z}(t))
\leq\rho_{\mathbb{D}}(0,z)=\rho_{\mathbb{D}}(0,\phi_{0,0}(s)).
\end{equation*}
From this, arguing as in the proof of \eqref{step1}, equation
\eqref{step2} follows.

Finally we prove that there exists $\delta>0$ such that
\begin{equation}\label{step3}
    I(0,s)\geq s+\delta \quad\hbox{ for all $s\in\lbrack0,I).$}
\end{equation}
Fix $0<r<1.$ We know that there exists a non-negative function
$k_{r,I+2}\in L^{d}([0,I+2],\mathbb{R})$ such that
\[
|G(z,t)|\leq k_{r,I+2}(t)
\]
for all $|z|\leq r$ and for almost every $t\in\lbrack0,I+2].$
Moreover, by Lemma \ref{WHVF4}, there exists a non-negative
function $\widehat{k}_{r,I+2}\in L^{d}([0,T],\mathbb{R})$ such
that
\[
|G(z,t)-G(w,t)|\leq\widehat{k}_{r,I+2}(t)|z-w|
\]
for all $|z|,|w|\leq r$ and for almost every
$t\in\lbrack0,I+2].$ The functions $\lbrack0,I+2]\ni
u\mapsto\int_{0}^{u}k_{r,I+2}(t)dt$ and $\lbrack 0,I+2]\ni u
\mapsto\int_{0}^{u}\widehat{k}_{r,I+2}(t)dt$ are absolutely
continuous and therefore there exists $0<\delta<1$ such that
for all $s\in\lbrack0,I+1]$ it holds
$\int_{s}^{s+\delta}k_{r,I+2}(t)dt\leq r$ and
$\int_{s}^{s+\delta }\widehat{k}_{r,I+2}(t)dt\leq r.$ Moreover,
if $f:[s,s+\delta]\rightarrow r\overline{\mathbb{D}}$ is
measurable, then $\lbrack s,s+\delta]\ni\xi\mapsto$
$G(f(\xi),\xi)$ is integrable and
\[
\left\vert \int_{s}^{t}G(f(\xi),\xi)d\xi\right\vert \leq\int_{s}%
^{t}|G(f(\xi),\xi)|d\tau\leq\int_{s}^{t}k_{r,I+2}(\xi)d\xi\leq
r.
\]
Therefore, for $s\in\lbrack0,I+1],$ we can define by induction
\begin{equation}
\left\{
\begin{array}
[c]{l}%
x_{s,0}(t):=0\\
x_{s,n}(t):=\int_{s}^{t}G(x_{s,n-1}(\xi),\xi)d\xi
\end{array}
\right. \label{induction}%
\end{equation}
for $t\in\lbrack s,s+\delta]$ and $n\in\mathbb{N}.$ Now, since $|x_{s,n}%
(t)|\leq r,$ we have
\begin{align*}
|x_{s,n}(t)-x_{s,n-1}(t)|  & \leq\int_{s}^{t}|G(x_{s,n-1}(\xi
),\xi)-G(x_{s,n-2}(\xi),\xi)|d\xi\\
&
\leq\int_{s}^{t}\widehat{k}_{r,I+2}(\xi)|x_{s,n-1}(\xi)-x_{s,n-2}(\xi
)|d\xi\\
& \leq\max_{\xi\in\lbrack
s,s+\delta]}|x_{s,n-1}(\xi)-x_{s,n-2}(\xi)|\int
_{s}^{t}\widehat{k}_{r,I+2}(\xi)d\xi\\
& \leq r\max_{\xi\in\lbrack
s,s+\delta]}|x_{s,n-1}(\xi)-x_{s,n-2}(\xi)|.
\end{align*}
From this inequality, we deduce that $\{x_{s,n}\}$ is a Cauchy
sequence in the Banach space $C([s,s+\delta])$ of continuous
complex functions from $[s,s+\delta]$, endowed with the
supremum norm. Therefore, it converges uniformly on
$[s,s+\delta]$ to a function $x\in C([s,s+\delta]).$ Since
\[
|G(x_{s,n-1}(\tau),\tau)|\leq k_{r,I+2}(\tau),
\]
 the Lebesgue dominated converge
theorem implies
\[
x(t)=\int_{s}^{t}G(x(\xi),\xi)d\xi
\]
for all $t\in\lbrack s,s+\delta].$ Therefore, $\phi_{s,0}=x$ on
$[s,s+\delta ],$ which proves that $I(0,s)\geq s+\delta$,
proving \eqref{step3}.

Equation \eqref{forever} follows immediately from
\eqref{step1}, \eqref{step2} and \eqref{step3}, and we are
done.
\end{proof}

As we will see, Herglotz vector fields in the unit disc can be
decomposed by means of Herglotz functions (this the the reason
for the name). We begin by recalling the following definition:

\begin{definition}\label{def-Her-fun}
Let $d\in [1,+\infty]$. A {\sl Herglotz function of order $d$}
is a function $p:\mathbb{D}\times\lbrack0,+\infty
)\mapsto\mathbb{C}$ with the following properties:

\begin{enumerate}
\item[HF1.] For all $z\in\mathbb{D},$ the function $\lbrack0,+\infty
)\ni t \mapsto p(z,t)\in\mathbb{C}$ belongs to
$L_{loc}^{d}([0,+\infty),\mathbb{C})$;

\item[HF2.] For all $t\in\lbrack0,+\infty),$ the function
$\mathbb{D}\ni z \mapsto p(z,t)\in\mathbb{C}$ is holomorphic;

\item[HF3.] For all $z\in\mathbb{D}$ and for all $t\in\lbrack0,+\infty),$ we
have $\Re p(z,t)\geq0.$
\end{enumerate}
\end{definition}

\begin{proposition}
\label{integrabilidad=integrabilidaden1punto} Let
$d\in\lbrack1,\infty].$ A function
$p:\mathbb{D}\times\lbrack0,+\infty)\mapsto\mathbb{C}$ is a
Herglotz function of order $d$ if and only if it satisfies HF2,
HF3 and the following two statements:

\begin{enumerate}
\item for all $z\in\mathbb{D},$ the function $\lbrack0,+\infty)\ni t\mapsto
p(z,t)\in\mathbb{C}$ is measurable;

\item there exists $z_{0}\in\mathbb{D}$ such that the function $\lbrack0,+\infty)\ni t \mapsto p(z_{0},t)\in\mathbb{C}$ belongs to
$L_{loc}^{d
}([0,+\infty),\mathbb{C})$.
\end{enumerate}
\end{proposition}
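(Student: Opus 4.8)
The plan is to dispose of the ``only if'' implication at once and then to concentrate on the ``if'' implication, where the (modest) content lies. If $p$ is a Herglotz function of order $d$, then by HF1 the map $t\mapsto p(z,t)$ lies in $L^{d}_{\mathrm{loc}}([0,+\infty),\C)$ for every $z\in\D$; in particular it is measurable, so (1) holds, and picking any $z_{0}\in\D$ gives (2). Conversely, assume that $p$ satisfies HF2, HF3, (1) and (2), and fix $z\in\D$; the goal is to show $t\mapsto p(z,t)\in L^{d}_{\mathrm{loc}}([0,+\infty),\C)$. Measurability of this map is exactly hypothesis (1), so the only issue is local $d$-integrability, and for that I would bound $|p(z,t)|$ pointwise in $t$ by a fixed multiple of $|p(z_{0},t)|$.

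The key input is the classical Herglotz estimate for holomorphic functions with nonnegative real part, adapted so that the base point is $z_{0}$ rather than $0$. First I would conjugate by an automorphism: let $\psi$ be an automorphism of $\D$ with $\psi(0)=z_{0}$ and put $q(w,t):=p(\psi(w),t)$. For each fixed $t\in[0,+\infty)$, HF2 and HF3 say that $w\mapsto q(w,t)$ is holomorphic on $\D$ with $\Re q(\cdot,t)\geq 0$, so the Herglotz representation formula yields, for $|w|\le r<1$,
\[
|q(w,t)|\ \le\ |\Im q(0,t)|+\frac{1+r}{1-r}\,\Re q(0,t)\ \le\ \frac{2(1+r)}{1-r}\,|q(0,t)|\ =\ \frac{2(1+r)}{1-r}\,|p(z_{0},t)|.
\]
Taking $w:=\psi^{-1}(z)$ and $r:=|\psi^{-1}(z)|$, this produces a constant $C_{z}:=\frac{2(1+|\psi^{-1}(z)|)}{1-|\psi^{-1}(z)|}<+\infty$ with $|p(z,t)|\le C_{z}\,|p(z_{0},t)|$; and because HF2 and HF3 are required for \emph{every} $t$, this inequality holds for all $t\in[0,+\infty)$, with no exceptional null set.

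It then remains only to combine this domination with (2): since $t\mapsto p(z_{0},t)$ lies in $L^{d}_{\mathrm{loc}}([0,+\infty),\C)$ and $t\mapsto p(z,t)$ is measurable, the bound $|p(z,t)|\le C_{z}|p(z_{0},t)|$ forces $t\mapsto p(z,t)\in L^{d}_{\mathrm{loc}}([0,+\infty),\C)$, which is HF1; hence $p$ is a Herglotz function of order $d$. I do not foresee any serious obstacle here: the only points requiring a little care are writing the Herglotz estimate at an arbitrary base point (handled by the conjugation with $\psi$) and the observation that HF2--HF3 hold for every $t$, which is precisely what makes the domination valid for all $t$ rather than merely for almost every $t$.
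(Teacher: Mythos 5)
Your argument is correct and follows essentially the same route as the paper: both proofs reduce HF1 to the pointwise-in-$t$ domination $|p(z,t)|\le C(z,z_{0})\,|p(z_{0},t)|$ valid for \emph{every} $t$ (since HF2--HF3 hold for every $t$), and then invoke hypothesis (1) for measurability and hypothesis (2) for local $L^{d}$ integrability. The only cosmetic difference is how the constant is produced -- you conjugate by an automorphism sending $0$ to $z_{0}$ and use the Herglotz representation, while the paper chains two distortion estimates for Carath\'eodory-class functions through the point $0$, namely $|p(z,t)|\le\frac{1+|z|}{1-|z|}|p(0,t)|\le\frac{1+|z|}{1-|z|}\frac{1+|z_{0}|}{1-|z_{0}|}|p(z_{0},t)|$.
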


\begin{proof}
We  have to prove that if $p$ satisfies HF2, HF3 and (1) and
(2), then it satisfies HF1. Let $z\in\mathbb{D}$. Fix a point
$t\geq0.$ Bearing in mind that the map $\mathbb{D}\ni w\mapsto
p(w,t)\in\mathbb{C}$ is holomorphic, by \cite[pages
39-40]{Pommerenke}, we have that
\[
|p(z,t)|\leq\frac{1+|z|}{1-|z|}|p(0,t)|\leq\frac{1+|z|}{1-|z|}\frac{1+|z_{0}%
|}{1-|z_{0}|}|p(z_{0},t)|.
\]
Now, since the function $\lbrack0,+\infty)\ni t\mapsto
p(z_{0},t)$ belongs to $L_{loc}^{d}([0,+\infty),\mathbb{C})$
and $\lbrack0,+\infty)\ni t\mapsto p(z,t)$ is measurable, the
above inequality implies that the function $\lbrack0,+\infty)\ni t\mapsto p(z,t)$ also belongs to $L_{loc}^{d}%
([0,+\infty),\mathbb{C}).$
\end{proof}

We are going to show  that there is essentially a one-to-one
correspondence between Herglotz vector fields and Berkson-Porta
data. To this aim we need a lemma:

\begin{lemma}\label{laszlo}
Let $G:\D\times [0,+\infty)  \to \C$ be a function such that
\begin{enumerate}
  \item For all $t\geq 0$ the map $\D\ni z\mapsto G(z,t)$
  is holomorphic.
  \item For all $z\in \D$ the map $[0,+\infty) \ni t\mapsto
  G(z,t)$ is measurable.
\end{enumerate}
Then the map $[0,+\infty)\ni t\mapsto G(\cdot, t)\in {\sf
Hol}(\D,\C)$, from the set $[0,+\infty)$ endowed with the
Lebesgue measure to the Fr\'echet space ${\sf Hol}(\D,\C)$, is
measurable.
\end{lemma}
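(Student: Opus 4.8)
Here is the line of attack I would follow.

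The plan is to use that ${\sf Hol}(\D,\C)$, with the compact-open topology, is a \emph{separable} Fr\'echet space (polynomials with coefficients in $\mathbb{Q}+i\mathbb{Q}$ are dense), hence a separable metrizable space; this turns ``measurability'' into a statement that can be tested against a countable family of scalar functions. Concretely, fix an exhaustion $\{\overline{\D(0,r_k)}\}_{k\geq 1}$ of $\D$ by closed sub-discs with $r_k\uparrow 1$, and let
\[
d(f,g):=\sum_{k\geq 1}2^{-k}\,\frac{\|f-g\|_k}{1+\|f-g\|_k},
\qquad \|h\|_k:=\sup_{z\in \overline{\D(0,r_k)}}|h(z)|,
\]
be the standard distance inducing the topology of ${\sf Hol}(\D,\C)$. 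Since this space is separable, its Borel $\sigma$-algebra is generated by the open $d$-balls, so a map from $[0,+\infty)$, equipped with the Lebesgue measure, into ${\sf Hol}(\D,\C)$ is measurable as soon as $t\mapsto d(G(\cdot,t),f)$ is measurable for every $f$ in a fixed countable dense subset of ${\sf Hol}(\D,\C)$.

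So I would reduce the lemma to the following claim: for each fixed $f\in {\sf Hol}(\D,\C)$, the function $t\mapsto d(G(\cdot,t),f)$ is Lebesgue measurable. Since a convergent series of measurable functions is measurable and $x\mapsto x/(1+x)$ is continuous, it is enough to show that for every $k$ the function $t\mapsto \|G(\cdot,t)-f\|_k$ is measurable. This is where hypothesis (1) enters: for each fixed $t$ the function $z\mapsto G(z,t)-f(z)$ is holomorphic on $\D$, hence continuous on the compact set $\overline{\D(0,r_k)}\subset\D$. Letting $Q_k$ be the countable set of points of $\D(0,r_k)$ with rational real and imaginary parts, $Q_k$ is dense in $\overline{\D(0,r_k)}$, so by continuity
\[
\|G(\cdot,t)-f\|_k=\sup_{z\in Q_k}|G(z,t)-f(z)|\qquad\text{for every }t\geq 0.
\]
For each fixed $z\in Q_k$ the function $t\mapsto|G(z,t)-f(z)|$ is measurable by hypothesis (2), and a countable supremum of measurable functions is measurable; this proves the claim, and chaining the reductions back gives the lemma.

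The step that carries the actual content is the reduction of the supremum over the uncountable compact disc to the countable set $Q_k$: this is precisely where the holomorphy of $z\mapsto G(z,t)$ — through its continuity on $\overline{\D(0,r_k)}$ — is needed, and it is what lets separability of the target space do the work (without some continuity in $z$ the statement would genuinely fail). Everything else is a routine instance of the standard principle that separate measurability or continuity together with a separable range yields measurability. An alternative would be to encode $G(\cdot,t)$ by its Taylor coefficients $a_n(t)=\frac{1}{2\pi}\int_0^{2\pi}G(r e^{i\theta},t)r^{-n}e^{-in\theta}\,d\theta$ and check measurability of each $a_n$ by a Carath\'eodory-function argument, but I would prefer the supremum approach above, which is more elementary and sidesteps any joint-measurability discussion.
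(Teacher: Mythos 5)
Your argument is correct, but it follows a genuinely different route from the paper. You scalarize: reduce measurability of $t\mapsto G(\cdot,t)$ to measurability of $t\mapsto d(G(\cdot,t),f)$ for $f$ in a countable dense set, then to measurability of the seminorms $t\mapsto \sup_{z\in \overline{\D(0,r_k)}}|G(z,t)-f(z)|$, and finally use holomorphy (hence continuity) in $z$ to replace the supremum over the compact disc by a supremum over countably many rational points, where hypothesis (2) applies; each reduction is standard and sound, and the separability of ${\sf Hol}(\D,\C)$ is used exactly where it should be. The paper instead expands $G(z,t)=\sum_n g_n(t)z^n$, proves by induction that each Taylor coefficient $t\mapsto g_n(t)$ is measurable (using difference quotients evaluated at the points $z=1/m$, which only requires measurability at countably many fixed $z$ and so avoids the joint-measurability issue you flag for the Cauchy-integral variant), observes that the polynomial partial sums give measurable maps into ${\sf Hol}(\D,\C)$, and then writes the sublevel sets $\{t: d_H(G(\cdot,t),f)<\epsilon\}$ as countable unions and intersections of measurable sets coming from the partial sums. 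Your proof is shorter and more elementary, being an instance of the general fact that a map measurable in $t$ and continuous in $z$ into a separable metrizable function space is strongly measurable; the paper's proof is slightly longer but yields as a by-product the measurability of the Taylor coefficients $g_n(t)$, and its limit-of-measurable-maps bookkeeping is the only extra overhead. Both establish the lemma in full.
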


\begin{proof}
Since  ${\sf Hol}(\D,\C)$ is a metrizable and separable
topological space, it is enough to show that, given $f\in {\sf
Hol}(\D,\C)$ and $\epsilon>0$, the set
\[
\{t\in [0,+\infty) : \hbox{d}_H(G(\cdot, t), f)<\epsilon\}
\]
is measurable; where here $\hbox{d}_H(\cdot, \cdot)$ denotes
the Fr\'echet distance in ${\sf Hol}(\D,\C)$.

Fix $t\geq 0$. Since $G(\cdot, t)$ is holomorphic in $\D$ there
exists a sequence $\{g_n(t)\}\subset \C$ such that
\[
G(z,t)=\sum_{n=0}^\infty g_n(t) z^n.
\]
The functions $[0,+\infty)\ni t\mapsto g_n(t)$ are measurable.
Indeed, $g_0(t)=G(0,t)$ is measurable by hypothesis (2). By
induction, assume that $g_k(t)$ is measurable for $k=0,\ldots,
n$. Then
\begin{equation*}
\begin{split}
g_{n+1}(t)&=\frac{G^{(n+1)}(0,t)}{(n+1)!}=\lim_{h\to
0}\frac{1}{h^{n+1}}[G(h,t)-\sum_{k=0}^n
\frac{G^{(k)}(0,t)}{k!}h^k]\\&= \lim_{m\to \infty} m^{n+1}[
G(\frac{1}{m},t)-\sum_{k=0}^n g_k(t)(\frac{1}{m})^k],
\end{split}
\end{equation*}
which proves that $t\mapsto g_{n+1}(t)$ is measurable,
concluding the induction.

Let $G_m(z,t):=\sum_{n=0}^m g_n(t) z^n$. Since the map $\C^m\ni
(a_0, \ldots, a_m)\mapsto \sum_{n=0}^m a_n z^n\in {\sf
Hol}(\D,\C)$ is continuous, and $t\mapsto g_n(t)$ is measurable
for all $n\in \N$, then $[0,+\infty)\ni t\mapsto G_m(\cdot
,t)\in {\sf Hol}(\D,\C)$ is measurable for all $m\in \N$.
Moreover, $\{G_m(\cdot, t)\}$ converges to $G(\cdot, t)$ in
${\sf Hol}(\D,\C)$. Therefore, $\hbox{d}_H(G(\cdot, t),
f)=\lim_{m\to \infty} \hbox{d}_H(G_m(\cdot, t), f)$, and hence
it follows easily that
\begin{equation*}
\begin{split}
\{t\in [0,+\infty) &: \hbox{d}_H(G(\cdot, t), f)<\epsilon\}
\\=& \bigcup_{p=1}^\infty
\bigcup_{n=1}^\infty\bigcap_{m=n}^\infty \{t\in [0,+\infty) :
\hbox{d}_H(G_m(\cdot, t), f)\leq \epsilon(1-\frac{1}{p+1})\}.
\end{split}
\end{equation*}
Since $\{G_m(\cdot, t)\}$ are measurable,  $\{t\in [0,+\infty)
: \hbox{d}_H(G_m(\cdot, t), f)\leq \epsilon(1-\frac{1}{p+1})\}$
is measurable and this proves the result.
\end{proof}

\begin{theorem}
\label{Herglotz-implica-VF} Let
$\tau:[0,+\infty)\rightarrow\overline {\mathbb{D}}$ be a
measurable function and let $p:\D\times [0,+\infty)\to \C$ be a
Herglotz function of order $d\in [1,+\infty)$. Then the map
$G_{\tau,p}:\mathbb{D}\times\lbrack
0,+\infty)\rightarrow\mathbb{C}$ given by
\begin{equation}\label{Herglotz-vf}
G_{\tau,p}(z,t)=(z-\tau(t))(\overline{\tau(t)}z-1)p(z,t),
\end{equation}
for all $z\in\mathbb{D}$ and for all $t\in\lbrack0,+\infty),$
is a Herglotz vector field  of order $d$ on the unit disc.

Conversely, if $G:\D\times [0,+\infty)\to \C$ is a Herglotz
vector field of order $d\in [1,+\infty)$ on the unit disc, then
there exist a measurable function
$\tau:[0,+\infty)\rightarrow\overline {\mathbb{D}}$ and a
Herglotz function $p:\D\times [0,+\infty)\to \C$  of order $d$
such that $G(z,t)=G_{\tau, p}(z,t)$ for almost every $t\in
[0,+\infty)$ and all $z\in \D$ (here $G_{\tau, p}$ is given by
\eqref{Herglotz-vf}).

Moreover, if $\tilde{\tau}:[0,+\infty)\rightarrow\overline
{\mathbb{D}}$ is another measurable function and
$\tilde{p}:\D\times [0,+\infty)\to \C$ is another Herglotz
function of order $d$ such that $G=G_{\tilde{\tau}, \tilde{p}}$
for almost every $t\in [0,+\infty)$ then
$p(z,t)=\tilde{p}(z,t)$ for almost every $t\in [0,+\infty)$ and
all $z\in \D$ and $\tau(t)=\tilde{\tau}(t)$ for almost all
$t\in [0,+\infty)$ such that $G(\cdot, t)\not\equiv 0$.
\end{theorem}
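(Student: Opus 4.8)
The plan is to prove the three assertions in order: that $G_{\tau,p}$ is a Herglotz vector field, that every Herglotz vector field arises this way, and the essential uniqueness. For the first assertion I would verify WHVF1--WHVF3 and the generator property separately. Condition WHVF2 is immediate since $z\mapsto (z-\tau(t))(\overline{\tau(t)}z-1)$ is a polynomial and $z\mapsto p(z,t)$ is holomorphic by HF2. For WHVF1, fix $z\in\D$: the map $t\mapsto (z-\tau(t))(\overline{\tau(t)}z-1)$ is measurable because $\tau$ is measurable and the expression is continuous in $\tau$, and $t\mapsto p(z,t)$ is measurable by HF1; products of measurable functions are measurable. For WHVF3, fix a compact $K\subset\D$ and $T>0$; the factor $|(z-\tau(t))(\overline{\tau(t)}z-1)|$ is bounded by a constant $c_K$ depending only on $K$ (since $\tau(t)\in\oD$), and by the Harnack-type estimate from \cite[pages 39-40]{Pommerenke} already invoked in Proposition \ref{integrabilidad=integrabilidaden1punto}, $|p(z,t)|\le \frac{1+|z|}{1-|z|}|p(0,t)|\le c_K' |p(0,t)|$ for $z\in K$, and $t\mapsto|p(0,t)|$ is in $L^d_{\rm loc}$ by HF1; so $k_{K,T}(t):=c_K c_K'|p(0,t)|\chi_{[0,T]}(t)$ works. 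Finally, for almost every fixed $t$ the function $z\mapsto p(z,t)$ is holomorphic with $\Re p(z,t)\ge 0$ by HF3, so by the Berkson-Porta representation recalled in Section 2, $z\mapsto G_{\tau,p}(z,t)=(z-\tau(t))(\overline{\tau(t)}z-1)p(z,t)$ is an infinitesimal generator; hence $G_{\tau,p}$ is a Herglotz vector field of order $d$.

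For the converse, let $G$ be a Herglotz vector field of order $d$. For almost every $t$, $G(\cdot,t)\in\mathrm{Gen}(\mathbb{D})$, so either $G(\cdot,t)\equiv 0$ or, by the uniqueness part of Berkson-Porta, there are unique $\tau_G(t)\in\oD$ and a holomorphic $p_G(\cdot,t)$ with $\Re p_G\ge 0$ such that $G(z,t)=(z-\tau_G(t))(\overline{\tau_G(t)}z-1)p_G(z,t)$. On the (null-measure-complement of the) set where $G(\cdot,t)\equiv 0$ set $\tau(t):=0$ and $p(\cdot,t):=0$; elsewhere set $\tau(t):=\tau_G(t)$, $p(\cdot,t):=p_G(\cdot,t)$. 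The key measurability point is that $\tau$ and $p(z,\cdot)$ are measurable: by Lemma \ref{laszlo} the map $t\mapsto G(\cdot,t)\in\mathrm{Hol}(\D,\C)$ is measurable, it takes values in $\mathrm{Gen}(\mathbb{D})$ for a.e.\ $t$, and by Proposition \ref{BP-continuidad} the Berkson-Porta maps $BP_\tau$ and $BP_p$ are continuous on $\mathrm{Gen}(\mathbb{D})\setminus\{\underline 0\}$; the set where $G(\cdot,t)=\underline 0$ is measurable (preimage of a closed set under a measurable map), so $\tau=BP_\tau\circ G(\cdot,\cdot)$ extended by $0$ is measurable, and likewise $t\mapsto p(\cdot,t)\in\mathrm{Hol}(\D,\C)$ is measurable, whence $t\mapsto p(z,t)$ is measurable for each $z$. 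It remains to see $p$ is a Herglotz function of order $d$: HF2 and HF3 hold by construction, and for HF1 (equivalently, integrability at one point $z_0$, by Proposition \ref{integrabilidad=integrabilidaden1punto}) note that whenever $\tau(t)\ne z_0$ we have $p(z_0,t)=G(z_0,t)/[(z_0-\tau(t))(\overline{\tau(t)}z_0-1)]$; one picks $z_0$ close to $\partial\D$ so that $|(z_0-\tau(t))(\overline{\tau(t)}z_0-1)|$ is bounded below by a positive constant uniformly in $\tau(t)\in\oD$, and then $|p(z_0,t)|\le c\,|G(z_0,t)|$, which is in $L^d_{\rm loc}$ by WHVF3 applied to $K=\{z_0\}$; on the set where $p(\cdot,t)\equiv 0$ the estimate is trivial. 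Then apply Proposition \ref{integrabilidad=integrabilidaden1punto} to conclude $p$ satisfies HF1.

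For the essential uniqueness, suppose $G=G_{\tilde\tau,\tilde p}=G_{\tau,p}$ for a.e.\ $t$. Fix such a $t$ with $G(\cdot,t)\not\equiv 0$. Then both $(z-\tau(t))(\overline{\tau(t)}z-1)p(z,t)$ and $(z-\tilde\tau(t))(\overline{\tilde\tau(t)}z-1)\tilde p(z,t)$ are Berkson-Porta representations of the same nonzero generator $G(\cdot,t)$, so by the uniqueness clause of Berkson-Porta, $\tau(t)=\tilde\tau(t)$ and $p(z,t)=\tilde p(z,t)$ for all $z\in\D$. For the a.e.\ $t$ with $G(\cdot,t)\equiv 0$, the product $(z-\tau(t))(\overline{\tau(t)}z-1)p(z,t)$ vanishes identically in $z$; since the polynomial factor has at most finitely many zeros in $\D$ (indeed $z=\tau(t)$ if $\tau(t)\in\D$, and possibly none), the holomorphic factor $p(z,t)$ must vanish identically, so $p(z,t)=0=\tilde p(z,t)$ for all $z$ there as well; but $\tau(t)$ is genuinely undetermined on this set, which is exactly why the statement only claims $\tau=\tilde\tau$ off the zero set of $G$. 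This completes the proof. I expect the main obstacle to be the measurability of the Berkson-Porta data, which is precisely why Lemma \ref{laszlo} and the continuity Proposition \ref{BP-continuidad} were set up in advance; modulo those, the argument is a packaging of Berkson-Porta plus the one-point integrability criterion.
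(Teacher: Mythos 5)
Your first part and the uniqueness argument are correct and essentially the same as the paper's (Berkson--Porta at each fixed $t$, the bound $|p(z,t)|\le\frac{1+|z|}{1-|z|}|p(0,t)|$ for WHVF3, and the uniqueness clause of the Berkson--Porta representation off the zero set of $G$), and in the converse your measurability argument via Lemma \ref{laszlo} and Proposition \ref{BP-continuidad} is exactly the paper's. However, the step where you verify HF1 has a genuine gap: you claim one can choose a single point $z_0$ close to $\partial\D$ so that $|(z_0-\tau(t))(\overline{\tau(t)}z_0-1)|$ is bounded below by a positive constant uniformly in $\tau(t)\in\oD$. This is false: $\tau(t)$ ranges over the \emph{closed} disc, so it may equal $z_0$ (the factor then vanishes) or lie arbitrarily close to $z_0$ on a set of $t$ of positive measure, and there the pointwise estimate $|p(z_0,t)|\le c\,|G(z_0,t)|$ breaks down completely. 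Indeed $\inf_{\tau\in\oD}|(z_0-\tau)(\overline{\tau}z_0-1)|=0$ for every $z_0\in\D$, so no choice of a single evaluation point can work by itself, and this is precisely the delicate point of the theorem.

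The missing idea is a measurable case split in $t$ according to the position of $\tau(t)$, which is how the paper proceeds. Set $A:=\{s:|\tau(s)|\ge \tfrac12\}$, a measurable set. For $s\in A$ evaluate at $z=0$: since $G(0,s)=\tau(s)p(0,s)$, one gets $|p(0,s)|\le 2|G(0,s)|$. For $s\notin A$ evaluate at $z=3/4$: there $|(3/4-\tau(s))(\overline{\tau(s)}\,3/4-1)|\ge(3/4-1/2)(1-3/4)=1/16$, hence $|p(3/4,s)|\le 16\,|G(3/4,s)|$, and the distortion bound for Carath\'eodory functions \cite[pages 39-40]{Pommerenke} gives $|p(0,s)|\le 7\,|p(3/4,s)|$, transferring the estimate back to the single point $0$. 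Since $s\mapsto G(0,s)$ and $s\mapsto G(3/4,s)$ are in $L^d_{loc}$ by WHVF3, combining the two pieces shows $s\mapsto p(0,s)\in L^d_{loc}$, and Proposition \ref{integrabilidad=integrabilidaden1punto} then yields HF1. With this replacement for your false uniform lower bound, the rest of your proposal goes through and coincides with the paper's proof.
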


\begin{proof} Assume $(\tau, p)$ be given. By the Berkson-Porta representation formula, for each fixed $t\in [0,+\infty)$ the function
$\D \ni z\mapsto G_{\tau,p}(z,t)$ is an infinitesimal
generator. Thus we need to prove that $G_{\tau, p}$ is a weak
holomorphic vector field of order $d$ on $\D$.

On the one hand, it is clear that for all $z\in\mathbb{D},$ the
function $\lbrack0,+\infty)\ni t \mapsto G_{\tau,p}(z,t)$ is
measurable and that for all $t\in\lbrack0,+\infty),$ the
function $\mathbb{D}\ni z\mapsto G_{\tau,p}(z,t)$ is
holomorphic. That is, $G_{\tau,p}$ satisfies WHVF1 and WHVF2.
On the other hand, fix a compact set $K\subset\mathbb{D}$ and
$T>0$. Let $0<r<1$ be such that $K\subset \D(r)=\{\zeta\in \D:
|\zeta|<r\}$. Fix $z\in K$ and $t\in \lbrack0,T].$ By
\cite[pages 39-40]{Pommerenke},
\begin{align*}
|G_{\tau,p}(z,t)|  & =|(z-\tau(t))(\overline{\tau(t)}z-1)||p(z,t)|\leq
4|p(z,t)|\\
& \leq4\frac{1+|z|}{1-|z|}|p(0,t)|\leq4\frac{1+r}{1-r}|p(0,t)|.
\end{align*}
Since the function $\lbrack0,+\infty)\ni t\mapsto p(0,t)$
belongs to $L_{loc}^{d}([0,+\infty),\mathbb{C}),$ writing
$k_{K,T}(t)=4\frac {1+r}{1-r}|p(0,t)|$ we conclude that
$G_{\tau,p}$ satisfies WHVF3 and it is a weak holomorphic
vector field of order $d$.

Conversely, let $G$ be a Herglotz vector field. Hence $z\mapsto
G(z,s)$ belongs to $\mathrm{Gen}(\mathbb{D}),$ for almost every
$s\in\lbrack0,+\infty)$. Therefore, by the Berkson-Porta
representation formula, we can find $\alpha_{s}\in
\overline{\mathbb{D}}$ and
$p_{s}\in\mathrm{Hol}(\mathbb{D},\mathbb{C})$ with $\Re
p_{s}\geq0$ such that, for all $z\in\mathbb{D}$ and almost
every $s\in [0,+\infty)$
\[
G(z,s)=(z-\alpha_{s})(\overline{\alpha_{s}}z-1)p_{s}(z).
\]
By WHVF1 for each fixed $z\in \D$ the function $[0,+\infty)\ni
t \mapsto G(z,t)$ is measurable. By Lemma \ref{laszlo}, the map
$\Psi:[0,+\infty)\ni t\mapsto G(\cdot, t)\in {\rm Hol}(\D,\C)$
from the set $[0,+\infty)$ endowed with the Lebesgue measure to
the Fr\'echet space ${\rm Hol}(\D,\C)$, is measurable.

Note that if $G(\cdot,s)\equiv 0$ then necessarily
$p_{s}(\cdot)\equiv 0$ and in such a case $\alpha_s$ can take
any value. We set $\alpha_s= 0$ if $G(\cdot,s)\equiv 0$.
 Let
$E:=\{s\in\lbrack0,+\infty):G(\cdot,s)=\underline{0}\}$ that
is, $s\in E$ if and only if $G(\cdot,s)\equiv 0$. Note that
since $E=\Psi^{-1}(\{\underline{0}\})$, the set $E$ is a
measurable subset of $[0,+\infty)$. Hence, $\alpha_s=0$ for
$s\in E$ and $\alpha_s=BP_{\tau}\circ\Psi(s)$ for $s\in
[0,+\infty)\setminus E$. Since $E$ is measurable, $\Psi$ is
measurable, $BP_{\tau}$ is continuous by Proposition
\ref{BP-continuidad} and $\mathrm{Gen}(\mathbb{D})$ is a closed
subset of $\mathrm{Hol}(\mathbb{D},\mathbb{C})$, it follows
that $\alpha_s$ is a measurable mapping from $[0,+\infty)$
into~$\overline {\mathbb{D}}.$

Similarly, being $p_s(z)\equiv 0$ for $s\in E$ and
$p_s(z)=BP_{p}\circ\Psi(s)$ for $s\in [0,+\infty)\setminus E$
and being $BP_{p}$  continuous by Proposition
\ref{BP-continuidad} we deduce that $p_{s}$ is a measurable map
from $[0,+\infty)$ into $\mathrm{Hol}(\mathbb{D},\mathbb{C})$.

We are left to check that $p_s$ is a Herglotz function of order
$d$. By Proposition \ref{integrabilidad=integrabilidaden1punto}
this is equivalent to show that there exists a point
$z_{0}\in\mathbb{D}$ such that the mapping
$\lbrack0,+\infty)\ni s\mapsto p_s(0)\in\mathbb{C}$ belongs to
$L_{loc}^{d}([0,+\infty),\mathbb{C}).$

Let $A:=\{s\in\lbrack0,+\infty):|\alpha_s|\geq\frac{1}{2}\}$.
Since $A=\alpha_{s}^{-1}(\mathbb{D}\setminus \D(1/2))$, we see
that $A$ is a Lebesgue measurable subset of $[0,+\infty).$
Moreover, when $s\in A$ clearly $\alpha_s\neq0$ and
\[
|p_s(0)|=\frac{\left\vert G(0,s)\right\vert }{|\alpha_s|}
\leq2\left\vert G(0,s)\right\vert.
\]
Hence $ \lbrack0,+\infty)\ni s\mapsto
\chi_{A}(s)p_s(0)\in\mathbb{C}$ belongs to
$L_{loc}^{d}([0,+\infty),\mathbb{C})$, where $\chi_{A}(s)=1$
for $s\in A$ and $\chi_{A}(s)=0$ otherwise.

Moreover, by the very definition of $A,$ when $s\in
[0,+\infty)\setminus A,$
\begin{align*}
|G(3/4,s)|  & =\left\vert 3/4-\alpha_s\right\vert \left\vert
\overline
{\alpha_s}3/4-1\right\vert \left\vert p_s(3/4)\right\vert \\
& \geq\left(  \frac{3}{4}-\frac{1}{2}\right)  \left(
1-\frac{3}{4}\right)
\left\vert p_s(3/4)\right\vert \\
& =\frac{1}{16}\left\vert p_s(3/4)\right\vert .
\end{align*}
Hence $\lbrack0,+\infty)\ni s\mapsto \chi_{[0,+\infty)\setminus
A}(s)p_s(3/4)\in\mathbb{C}$ belongs to
$L_{loc}^{d}([0,+\infty),\mathbb{C}).$

By the distortion theorem for Carath\'{e}odory functions
\cite[pages 39-40]{Pommerenke},  for every $s\in\lbrack
0,+\infty)$,
\[
|p_s(0)|\leq\frac{1+3/4}{1-3/4}|p_s(3/4)|=7|p_s(3/4)|.
\]
Therefore,   $\lbrack0,+\infty)\ni s\mapsto
\chi_{[0,+\infty)\setminus A} (s)p_s(0)\in\mathbb{C}$ belongs
to $L_{loc}^{d}([0,+\infty),\mathbb{C})$. Thus $p_s(0)\in
L_{loc}^{d}([0,+\infty),\mathbb{C})$, and we are done.

The statement about uniqueness follows at once from the
uniqueness of the Berkson-Porta representation formula.
\end{proof}

The representation of Herglotz vector fields by means of
Herglotz functions given by Theorem \ref{Herglotz-implica-VF}
will turn out to be a very powerful tool, because it allows to
use distortion theorems for Carath\'eodory's function, a tool
which is not available in higher dimensions (see \cite{BCM2}).

\section{From Herglotz vector fields to evolution families}

For the sake of \ clearness, we begin by recalling the
well-known Gronwall's Lemma as needed for our aims.

\begin{lemma}
\label{Gronwall} Let $\theta:[a,b]\rightarrow\mathbb{R}$ be a
continuous function and $k\in L^{1}([a,b],\mathbb{R)}$
non-negative. If there exists $C\geq0$ such that for all
$t\in\lbrack a,b]$
\[
\theta(t)\leq C+\int_{a}^{t}\theta(\xi)k(\xi)d\xi\qquad\text{ (resp., }%
\theta(t)\leq C+\int_{t}^{b}\theta(\xi)k(\xi)d\xi),
\]
then%
\[
\theta(t)\leq C\exp\left(  \int_{a}^{t}k(\xi)d\xi\right)  \qquad\text{ (resp.,
}\theta(t)\leq C\exp\left(  \int_{t}^{b}k(\xi)d\xi\right)  \text{)}.
\]

\end{lemma}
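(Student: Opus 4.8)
The plan is to use the standard integrating-factor trick. First I would set $\Psi(t) := C + \int_a^t \theta(\xi)k(\xi)\,d\xi$ for $t \in [a,b]$. Since $\theta$ is continuous (hence bounded) on $[a,b]$ and $k \in L^1([a,b],\mathbb{R})$, the product $\theta k$ is integrable, so $\Psi$ is well-defined and absolutely continuous on $[a,b]$, with $\Psi'(t) = \theta(t)k(t)$ for almost every $t$. The hypothesis says precisely $\theta(t) \leq \Psi(t)$ for all $t\in[a,b]$; combined with $k(t)\geq 0$ this gives $\Psi'(t) = \theta(t)k(t) \leq \Psi(t)k(t)$ for almost every $t\in[a,b]$.

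Next I would introduce the auxiliary function $h(t) := \Psi(t)\exp\bigl(-\int_a^t k(\xi)\,d\xi\bigr)$, a product of absolutely continuous functions and hence absolutely continuous on $[a,b]$. For almost every $t$,
\[
h'(t) = \bigl(\Psi'(t) - \Psi(t)k(t)\bigr)\exp\Bigl(-\int_a^t k(\xi)\,d\xi\Bigr) \leq 0 ,
\]
by the inequality of the previous step together with the positivity of the exponential factor. An absolutely continuous function with almost everywhere non-positive derivative is non-increasing, so $h(t) \leq h(a) = \Psi(a) = C$ for all $t\in[a,b]$. Unwinding the definition of $h$ yields $\Psi(t) \leq C\exp\bigl(\int_a^t k(\xi)\,d\xi\bigr)$, and since $\theta(t)\leq \Psi(t)$ this gives the desired bound $\theta(t)\leq C\exp\bigl(\int_a^t k(\xi)\,d\xi\bigr)$.

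For the ``resp.'' statement I would reduce to the case already proved by the change of variable $t \mapsto a+b-t$: setting $\widetilde\theta(t) := \theta(a+b-t)$ and $\widetilde k(t) := k(a+b-t)$, the second hypothesis becomes $\widetilde\theta(t) \leq C + \int_a^t \widetilde\theta(\xi)\widetilde k(\xi)\,d\xi$, to which the first part applies and gives $\widetilde\theta(t)\leq C\exp\bigl(\int_a^t \widetilde k(\xi)\,d\xi\bigr)$; translating back produces $\theta(t)\leq C\exp\bigl(\int_t^b k(\xi)\,d\xi\bigr)$.

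There is no serious obstacle here; the only points requiring a little care are measure-theoretic: that $\theta k$ is integrable so that $\Psi$ is absolutely continuous and differentiable almost everywhere, and that the sign of $\theta$ (which is allowed to be negative) causes no trouble. Indeed the step $\theta(t)k(t)\leq\Psi(t)k(t)$ uses only $k\geq 0$ together with the hypothesis $\theta\leq\Psi$, and the inequality $\theta\leq\Psi$ is preserved throughout the argument, so that the final conclusion follows even when $\theta$ takes negative values.
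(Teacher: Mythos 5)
Your proof is correct. Note that the paper itself does not prove this lemma at all: it is merely recalled as the well-known Gronwall inequality, so there is no argument to compare against. Your integrating-factor proof is the standard one and is complete: $\Psi$ is absolutely continuous because $\theta k\in L^1$, the auxiliary function $h(t)=\Psi(t)\exp\bigl(-\int_a^t k(\xi)\,d\xi\bigr)$ is absolutely continuous with $h'\leq 0$ almost everywhere, hence non-increasing, and the reduction of the second (backward) statement to the first via the reflection $t\mapsto a+b-t$ is carried out correctly, with the change of variables turning $\int_t^b k(\xi)\,d\xi$ into the appropriate forward integral. The only hypotheses actually used are exactly those stated ($k\geq 0$, $k\in L^1$, $\theta$ continuous), and the possible negativity of $\theta$ is handled correctly since the comparison $\theta\leq\Psi$ is all that is needed.
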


\begin{theorem}
\label{Herglotz-implica-EF} Let $G:\D\times[0,+\infty)\to \C$
be a Herglotz vector field of order $d\in [1,+\infty)$.  For
all $s\geq0$ and $z\in\mathbb{D},$ let $\phi_{s,z}$ be the
solution of the problem
\[
\left\{
\begin{array}
[c]{l}%
\overset{\bullet}{x}(t)=G(x(t),t)\text{ for a. e. }t\in\lbrack
s,+\infty)\\
x(s)=z.
\end{array}
\right.  \text{ }%
\]
Let $\varphi_{s,t}(z):=\phi_{s,z}(t)$ for all $0\leq s\leq
t<+\infty$ and for all $z\in\mathbb{D}$. Then $(\varphi_{s,t})$
is an evolution family in the unit disc of order $d$.
\end{theorem}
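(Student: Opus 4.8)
The plan is to verify the three axioms EF1, EF2, EF3 for the family $(\varphi_{s,t})$ defined by the flow of $G$. First, recall that by Theorem \ref{semicompletezza} the Herglotz vector field $G$ is semicomplete, so $\phi_{s,z}(t)$ is defined for all $t \ge s$ and stays in $\mathbb{D}$; hence $\varphi_{s,t}$ is a well-defined self-map of $\mathbb{D}$ for all $0 \le s \le t < +\infty$. That it is holomorphic in $z$ will follow either from the standard smooth/holomorphic dependence on initial conditions in Carath\'eodory ODE theory applied to the holomorphic vector field $z \mapsto G(z,t)$, or --- more robustly --- by combining the bound from Lemma \ref{WHVF4} with Gronwall's Lemma \ref{Gronwall} to get local uniform Lipschitz estimates in $z$, together with the fact that a locally uniform limit of the Picard iterates $x_{s,n}(\cdot)(z)$ (which are holomorphic in $z$ at each fixed $t$) is holomorphic; since $\varphi_{s,t}$ is obtained as such a limit on small time intervals and then composed, holomorphicity propagates. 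Axiom EF1, $\varphi_{s,s} = \mathrm{id}_{\mathbb{D}}$, is immediate from the initial condition $x(s) = z$.

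For EF2, the semigroup-type identity $\varphi_{s,t} = \varphi_{u,t} \circ \varphi_{s,u}$ for $0 \le s \le u \le t$, the argument is the usual uniqueness-of-solutions trick: fix $z \in \mathbb{D}$ and set $w := \varphi_{s,u}(z) = \phi_{s,z}(u)$. Then both $t \mapsto \phi_{s,z}(t)$ (restricted to $[u,+\infty)$) and $t \mapsto \phi_{u,w}(t)$ are locally absolutely continuous solutions of $\overset{\bullet}{x}(t) = G(x(t),t)$ on $[u,+\infty)$ satisfying the same initial condition $x(u) = w$. By the uniqueness statement in Carath\'eodory's theorem (recorded in the maximality property (3) of positive trajectories in Section 4), these two solutions coincide, so $\phi_{s,z}(t) = \phi_{u,w}(t)$ for all $t \ge u$, i.e. $\varphi_{s,t}(z) = \varphi_{u,t}(\varphi_{s,u}(z))$.

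The one axiom that genuinely uses the $L^d$ structure is EF3, and I expect this to be the main point. Fix $z \in \mathbb{D}$ and $T > 0$. By Lemma \ref{EF-acotacion}-type reasoning --- or directly, since all trajectories starting from the compact set $\{z\}$ (or a small disc around it) stay in a compact subset of $\mathbb{D}$ on $[0,T]$, which one gets from the distance-decreasing estimate \eqref{claim} proved inside Theorem \ref{semicompletezza} --- there is $0 < r < 1$ with $|\varphi_{s,t}(z)| \le r$ for all $0 \le s \le t \le T$. Apply WHVF3 with the compact set $K = \overline{\mathbb{D}(0,r)}$: there is a non-negative $k_{K,T} \in L^d([0,T],\mathbb{R})$ with $|G(\zeta,\xi)| \le k_{K,T}(\xi)$ for all $|\zeta| \le r$ and a.e.\ $\xi \in [0,T]$. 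Then for $0 \le s \le u \le t \le T$,
\[
|\varphi_{s,u}(z) - \varphi_{s,t}(z)| = \left| \int_u^t \overset{\bullet}{\phi_{s,z}}(\xi)\, d\xi \right| = \left| \int_u^t G(\phi_{s,z}(\xi),\xi)\, d\xi \right| \le \int_u^t k_{K,T}(\xi)\, d\xi,
\]
which is exactly EF3 with the function $k_{z,T} := k_{K,T}$. This also re-confirms the local absolute continuity in $t$. The only subtlety is making sure the radius $r$ can be chosen uniformly over all $0 \le s \le t \le T$; this is where the hyperbolic contraction estimate \eqref{claim} (equivalently, $(\varphi_{s,t})$ being a $\rho_{\mathbb{D}}$-contraction) is used: since $\varphi_{s,t}(0)$ moves at controlled hyperbolic speed on $[0,T]$, it stays in a compact set, and then $\varphi_{s,t}(z)$ lies in a fixed hyperbolic ball around $\varphi_{s,t}(0)$, hence in a fixed compact subset of $\mathbb{D}$. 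With EF1, EF2, EF3 all verified, $(\varphi_{s,t})$ is an evolution family of order $d$.
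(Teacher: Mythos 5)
Your proposal is correct in outline and shares most of its skeleton with the paper's proof: semicompleteness from Theorem \ref{semicompletezza}, EF1 and EF2 from uniqueness of Carath\'eodory solutions, a uniform absorption claim ($|\varphi_{s,t}(z)|\leq R<1$ for $|z|\leq r$, $0\leq s\leq t\leq T$) obtained from the hyperbolic contraction estimate \eqref{claim}, and EF3 by writing $\varphi_{s,t}(z)-\varphi_{s,u}(z)$ as the integral of $G$ along the trajectory and dominating by the WHVF3 function; these steps are essentially identical to the paper's. Where you genuinely diverge is the holomorphicity of $z\mapsto\varphi_{s,t}(z)$, which is the bulk of the paper's proof: the paper runs a direct difference-quotient argument, bounding $\varphi_{s,u}(z+h)-\varphi_{s,u}(z)$ above and below via Gronwall's Lemma \ref{Gronwall} and Lemma \ref{WHVF4}, and then showing with dominated convergence that the quotient converges to $\exp\bigl(\int_s^t G'(\varphi_{s,u}(z),u)\,du\bigr)$; your route instead runs the Picard iteration with varying initial point $z$ on short time intervals, notes each iterate is holomorphic in $z$ (integral of a holomorphic-in-$z$, measurable-in-$t$, dominated integrand; Morera plus Fubini), passes to the locally uniform limit by Weierstrass, and extends to all $t$ by composing via EF2. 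Both routes work; the paper's has the side benefit of producing the two-sided estimate \eqref{Eq_Univalencia} and the explicit formula for $\varphi_{s,t}'(z)$, which is what later yields univalence in Corollary \ref{Univalencia}, while your argument gives holomorphicity only (which is all this theorem needs). Two small repairs: citing ``standard holomorphic dependence on initial data'' for Carath\'eodory ODEs is exactly what must be proved here, so keep the Picard argument as the actual proof; and in the absorption claim the phrase ``$\varphi_{s,t}(0)$ moves at controlled hyperbolic speed'' is not a correct justification (no uniform hyperbolic speed bound is available), the right argument being the one in the paper and in Lemma \ref{EF-acotacion}: $\rho_\D(\varphi_{s,t}(0),\varphi_{0,t}(0))=\rho_\D(\varphi_{s,t}(0),\varphi_{s,t}(\varphi_{0,s}(0)))\leq\rho_\D(0,\varphi_{0,s}(0))$, which is bounded on $[0,T]$ because the single trajectory $s\mapsto\varphi_{0,s}(0)$ is continuous with compact image in $\D$.
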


\begin{proof}
By Theorem \ref{semicompletezza}, the Herglotz vector field $G$
is a semi-complete weak holomorphic vector field on the unit
disc. Therefore, the value $\phi _{s,z}(t)$ is well-defined for
all $0\leq s\leq t<+\infty$ and for all $z\in\mathbb{D}.$
Moreover, by uniqueness of solutions of ODE's, it follows that
$\varphi_{s,t}=\varphi_{u,t}\circ\varphi_{s,u}$ for all $0\leq
s\leq u\leq t<+\infty.$ Thus EF1, EF2 hold, and we are left to
prove EF3 and the holomorphicity of $\varphi_{s,t}.$

We prove that $\varphi_{s,t}:\mathbb{D}\rightarrow\mathbb{D}$
is holomorphic for all $0\leq s\leq u\leq t<+\infty$.

First, we claim that for each $0<T<+\infty$ and $0<r<1,$ there
exists $R=R(r,T)<1$ such that
\begin{equation}\label{claim2}
|\varphi_{s,t}(z)|\leq R
\end{equation}
for all $0\leq s\leq t\leq T$ and $|z|\leq r.$

Seeking for a contradiction, assume \eqref{claim2} is not true.
Then there exist three sequences $(z_{n}),$ $(s_{n}),$ and
$(t_{n})$ such that $|z_{n}|\leq r,$ $z_{n}\rightarrow z_{0},$
$s_{n},t_{n}\in\lbrack0,T],$ $s_{n}\leq t_{n},$
$s_{n}\rightarrow s_{0},$ $t_{n}\rightarrow t_{0},$ and
$|\varphi_{s_{n},t_{n}}(z_{n})|\rightarrow1.$ Since the map
$\varphi _{s_{n},t_{n}}$ is a contraction for the hyperbolic
metric (see the proof of
Theorem \ref{Herglotz-implica-VF}), we have that $\rho_{\mathbb{D}}%
(\varphi_{s_{n},t_{n}}(z_{n}),\varphi_{s_{n},t_{n}}(z_{0}))\leq\rho
_{\mathbb{D}}(z_{n},z_{0})\rightarrow0.$ Then $|\varphi_{s_{n},t_{n}}%
(z_{0})|\rightarrow1.$ The map $t\mapsto\varphi_{0,t}(z_{0}) $
is continuous (because $\phi_{0,z_{0}}$ is a positive
trajectory of the semi-complete vector field $G$). Moreover,
\begin{align*}
\rho_{\mathbb{D}}(\varphi_{0,t_{n}}(z_{0}),\varphi_{s_{n},t_{n}}(z_{0}))
&
=\rho_{\mathbb{D}}(\varphi_{s_{n},t_{n}}(\varphi_{0,s_{n}}(z_{0}%
)),\varphi_{s_{n},t_{n}}(z_{0}))\\
&
\leq\rho_{\mathbb{D}}(\varphi_{0,s_{n}}(z_{0}),z_{0})\rightarrow
\rho_{\mathbb{D}}(\varphi_{0,s}(z_{0}),z_{0})<+\infty.
\end{align*}
Again this implies that
$|\varphi_{0,t_{n}}(z_{0})|\rightarrow1.$ But
$\varphi_{0,t_{n}}(z_{0})\rightarrow\varphi_{0,t}(z_{0})\in\mathbb{D}$.
A contradiction. Hence \eqref{claim2} holds.

Fix $s<t$ and $z\in\mathbb{D}$. Let $|z|<r<1$, $T>t$ and let
$R=R(r,T)$ be given in \eqref{claim2}. Write
$\widehat{R}=(R+1)/2.$ By the very definition of weak
holomorphic vector field and by Lemma \ref{WHVF4} there
exist two non-negative functions $k_{R,T},\widehat{k}_{R,T}\in L^{d}%
([0,T],\mathbb{R})$ such that
\[
|G(w,u)|\leq k_{R,T}(u)
\]
and
\[
|G(w_{1},u)-G(w_{2},u)|\leq\widehat{k}_{R,T}(u)|w_{1}-w_{2}|
\]
for all $|w_{1}|,|w_{2}|,|w|\leq R$ and for almost every
$u\in\lbrack0,T].$

The map $u\mapsto G^{\prime}(z,u)$ is clearly measurable. Thus,
the function $t\mapsto G^{\prime}(\varphi_{s,t}(z),t)$ is also
measurable. Therefore,
\[
|G^{\prime}(\varphi_{s,u}(z),u)|=\frac{1}{2\pi}\left\vert
{\displaystyle\int\nolimits_{C(0,\widehat{R})^{+}}}
\frac{G(\xi,u)}{\xi-\varphi_{s,u}(z)}d\xi\right\vert \leq k_{R,T}(u)\frac
{2R}{1-R}.
\]
Therefore, the map $u\mapsto G^{\prime}(\varphi_{s,u}(z),u)$
belongs to $L^{d}([s,T],\mathbb{R}).$ Once we know that this
function is integrable, we claim that
\[
\lim_{h\rightarrow0}\frac{\varphi_{s,u}(z+h)-\varphi_{s,u}(z)}{h}=\exp\left(
\int_{s}^{t}G^{\prime}(\varphi_{s,u}(z),u)du\right)  .
\]
To simplify the notation we write $H(u):=\exp\left(
-\int_{s}^{u}G_{\tau
,p}^{\prime}(\varphi_{s,\xi}(z),\xi)d\xi\right)  .$ Moreover,
for $|h|<R-|z|$ we define
$\theta(u):=|\varphi_{s,u}(z+h)-\varphi_{s,u}(z)|$ and
$f_{h}(u):=\frac{\varphi_{s,u}(z+h)-\varphi_{s,u}(z)}{h}.$ We
have
\begin{align*}
\theta(u)  & =\left\vert h+\int_{s}^{u}G(\varphi_{s,\xi}%
(z+h),\xi)d\xi-\int_{s}^{u}G(\varphi_{s,\xi}(z),\xi)d\xi\right\vert
\\
& \leq|h|+\int_{s}^{u}\theta(\xi)\widehat{k}_{R,T}(\xi)d\xi=\theta(s)+\int
_{s}^{u}\theta(\xi)\widehat{k}_{R,T}(\xi)d\xi.
\end{align*}
Lemma \ref{Gronwall} implies that
\[
\theta(u)\leq\theta(s)\exp\left(  \int_{s}^{u}\widehat{k}_{R,T}(\xi
)d\xi\right)  =|h|\exp\left(  \int_{s}^{u}\widehat{k}_{R,T}(\xi)d\xi\right)  .
\]
That is, $|f_{h}(u)|\leq\exp\left(  \int_{s}^{u}\widehat{k}_{R,T}(\xi
)d\xi\right)  .$ In a similar way, for all $0\leq s\leq v\leq u\leq t,$ we
have that
\begin{align*}
\theta(v)  & \leq\theta(u)+\left\vert
\int_{v}^{u}G(\varphi_{s,\xi
}(z+h),\xi)d\xi-\int_{v}^{u}G(\varphi_{s,\xi}(z),\xi)d\xi\right\vert
\\
& \leq\theta(u)+\int_{v}^{u}\theta(\xi)\widehat{k}_{R,T}(\xi)d\xi.
\end{align*}
Using again Lemma \ref{Gronwall}, we have that
\[
|\varphi_{s,v}(z+h)-\varphi_{s,v}(z)|\leq|\varphi_{s,u}(z+h)-\varphi
_{s,u}(z)|\exp\left(  \int_{v}^{u}\widehat{k}_{R,T}(\xi)d\xi\right)  \text{
for all }s\leq v\leq u\leq t.
\]
In particular,
\begin{equation}
|h|\leq|\varphi_{s,u}(z+h)-\varphi_{s,u}(z)|\exp\left(  \int_{s}^{t}%
\widehat{k}_{R,T}(\xi)d\xi\right)  .\label{Eq_Univalencia}%
\end{equation}
This means that if $h\neq0,$ then
$\varphi_{s,u}(z+h)\neq\varphi_{s,u}(z) $ for all $u\in\lbrack
s,t].$ Fix $h>0.$ Then there is a  set $A=A(h)$ of zero measure
such that for all $u\in\lbrack s,T]\setminus A(h),$ we have
\begin{align*}
f_{h}^{\prime}(u)  & =\frac{\overset{\bullet}{\varphi}_{s,u}(z+h)-\overset
{\bullet}{\varphi}_{s,u}(z)}{h}\\
& =\frac{G(\varphi_{s,u}(z+h),u)-G(\varphi_{s,u}(z),u)}{h}\\
& =G^{\prime}(\varphi_{s,u}(z),u)f_{h}(u)+L_{h}(u)
\end{align*}
where
\[
L_{h}(u)=f_{h}(u)\left[  \frac{G(\varphi_{s,u}(z+h),u)-G_{\tau
,p}(\varphi_{s,u}(z),u)}{\varphi_{s,u}(z+h)-\varphi_{s,u}(z)}-G_{\tau
,p}^{\prime}(\varphi_{s,u}(z),u)\right]  .
\]
Note $L_{h}(u)$ is well-defined because $\varphi_{s,u}(z+h)\neq\varphi
_{s,u}(z).$

Then, by the very definition of $H,$ it holds
\[
\frac{d(H(u)f_{h}(u))}{du}=H(u)f_{h}^{\prime}(u)-H(u)G^{\prime
}(\varphi_{s,u}(z),u)f_{h}(u)=H(u)L_{h}(u)\text{ \quad a.e. on
}u\in\lbrack s,t].
\]
Integrating on $u\in\lbrack s,t],$ we obtain
\[
H(t)f_{h}(t)-1=\int_{s}^{t}H(u)L_{h}(u)du.
\]
Moreover
\begin{multline*}
|H(u)L_{h}(u)|\leq\\
\leq\left\vert \exp\left(
-\int_{s}^{u}G^{\prime}(\varphi_{s,\xi
}(z),\xi)d\xi\right)  \right\vert \exp\left(  \int_{s}^{u}\widehat{k}%
_{R,T}(\xi)d\xi\right)  \left[  \widehat{k}_{R,T}(u)+k_{R,T}(u)\frac{2R}%
{r-1}\right]  .
\end{multline*}
Since this bound does not depend on $h$ and
\[
\lim_{h\rightarrow0}\frac{G(\varphi_{s,u}(z+h),u)-G%
(\varphi_{s,u}(z),u)}{\varphi_{s,u}(z+h)-\varphi_{s,u}(z)}=G^{\prime
}(\varphi_{s,u}(z),u),
\]
by the Lebesgue dominated convergence theorem, we have $\lim_{h\rightarrow
0}\int_{s}^{t}H(u)L_{h}(u)du=0.$ Therefore,
\[
\lim_{h\rightarrow0}\frac{\varphi_{s,u}(z+h)-\varphi_{s,u}(z)}{h}=\frac
{1}{H(t)},
\]
proving that $\varphi_{s,u}(z)$ is holomorphic for all $0\leq
s\leq t<+\infty$.

To end up the proof we need to check property EF3. Let $0\leq
s\leq u\leq t\leq T$, $z\in\mathbb{D}$ and let $R=R(T,|z|)$ be
the number given by \eqref{claim2}. Then
\begin{align*}
|\varphi_{s,u}(z)-\varphi_{s,v}(z)|  & =\left\vert \int_{u}^{v}\overset
{\bullet}{\varphi}_{s,\xi}(z)d\xi\right\vert \\
& =\left\vert
\int_{u}^{v}G(\varphi_{s,\xi}(z),\xi)d\xi\right\vert
\leq\int_{u}^{v}k_{R,T}(\xi)d\xi.
\end{align*}
Note that this also implies that if $p$ is of order $d$ for
some $d\in\lbrack1,+\infty],$ then $(\varphi_{s,t})$ is also of
order $d$.
\end{proof}

\section{From evolution families to Herglotz vector fields}

In this section we prove the converse of Theorem
\ref{Herglotz-implica-EF}. Part of the proof relies on the
following result on measurable selections:

\begin{theorem}
\label{seleccion medible} \cite[Theorem III.30, page 80]{Castaing-Valadier}
Let $(\Omega,\Sigma,\mu)\,\,$be a positive $\sigma$-finite complete measure
space, $[X,d]$\ a separable and complete metric space and $\Gamma$\ a
multifunction from $\Omega$\ to the subsets of $X.$\ Assume that:

(i) For every $\omega\in\Omega,$\ $\Gamma(\omega)$\ is a closed
non-empty subset of $X$.

(ii) For every $x\in X$\ and every $r>0,$\
$\{\omega\in\Omega:\Gamma (\omega)\cap
B(x,r)\neq\emptyset\}\in\Sigma.$\ (As usual, $B(x,r)$\ denotes
the open unit ball in $X$\ with center $x$\ and radius $r).$

Then $\Gamma$\ admits a measurable selector
$\sigma:\Omega\longrightarrow X$; namely, for every
$\omega\in\Omega,$\ we have $\sigma(\omega)\in \Gamma(\omega)$\
and the inverse image by $\sigma$\ of any borelian in $X$\
belongs to $\Sigma.$
\end{theorem}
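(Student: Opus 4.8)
This is the classical measurable selection theorem of Kuratowski and Ryll-Nardzewski, so the plan is simply to reproduce its standard successive-approximation proof; I note in passing that the measure $\mu$ and its $\sigma$-finiteness and completeness are not actually needed for this argument, only the $\sigma$-algebra $\Sigma$ and hypotheses (i)--(ii) being used. To begin I would fix a countable dense set $\{x_n:n\in\N\}\subset X$, available since $X$ is separable, together with the distance-to-a-set function $\delta_\omega(\cdot):=d(\cdot,\Gamma(\omega))$, which is $1$-Lipschitz and, since $\Gamma(\omega)$ is closed by (i), vanishes exactly on $\Gamma(\omega)$.

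The heart of the matter is to construct, by induction on $k\ge 0$, measurable maps $\sigma_k:\Omega\to X$ each taking only countably many values and such that, for every $\omega\in\Omega$, one has $\delta_\omega(\sigma_k(\omega))<2^{-k}$ and, for $k\ge 1$, also $d(\sigma_k(\omega),\sigma_{k-1}(\omega))<2^{-k+1}$. For $k=0$ I would put $A_n:=\{\omega:\Gamma(\omega)\cap B(x_n,1)\neq\emptyset\}$; each $A_n$ lies in $\Sigma$ by hypothesis (ii), and $\bigcup_n A_n=\Omega$ by (i) and density, so one may set $\sigma_0(\omega):=x_{n(\omega)}$ with $n(\omega)$ the least $n$ such that $\omega\in A_n$, and this is measurable because $\{n(\omega)=n\}=A_n\setminus\bigcup_{m<n}A_m\in\Sigma$. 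For the inductive step, given $\sigma_{k-1}$, one first checks that an admissible choice always exists: writing $v=\sigma_{k-1}(\omega)$ one has $\delta_\omega(v)<2^{-k+1}$, so picking $y\in\Gamma(\omega)$ with $d(v,y)<2^{-k+1}$ and then, by density, an index $n$ with $d(x_n,y)<\min\{2^{-k},\,2^{-k+1}-d(v,y)\}$ yields simultaneously $d(x_n,v)<2^{-k+1}$ and $\Gamma(\omega)\cap B(x_n,2^{-k})\neq\emptyset$. Then I would define $\sigma_k(\omega):=x_{m(\omega)}$ with $m(\omega)$ the least $m$ satisfying $d(x_m,\sigma_{k-1}(\omega))<2^{-k+1}$ and $\Gamma(\omega)\cap B(x_m,2^{-k})\neq\emptyset$; measurability is then clear, since $\sigma_{k-1}$ is constant on each piece of a countable measurable partition of $\Omega$, on each piece the first condition is a fixed numerical condition on $m$, and the second condition cuts out a set in $\Sigma$ by (ii).

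To conclude, since $d(\sigma_k(\omega),\sigma_{k+1}(\omega))<2^{-k}$ for all $k$, the sequence $(\sigma_k(\omega))_k$ is uniformly Cauchy and hence converges in the complete space $X$ to a limit $\sigma(\omega)$; the map $\sigma$ is $\Sigma$-measurable, being a pointwise limit of $\Sigma$-measurable maps into a metric space, so in particular the inverse image under $\sigma$ of every Borel subset of $X$ lies in $\Sigma$. Letting $k\to\infty$ in $\delta_\omega(\sigma_k(\omega))<2^{-k}$ and using the continuity of $\delta_\omega$ gives $\delta_\omega(\sigma(\omega))=0$, that is $\sigma(\omega)\in\Gamma(\omega)$, so $\sigma$ is the desired measurable selector. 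The one genuinely delicate step is the induction: one must keep the distance-to-$\Gamma$ bound and the Cauchy bound alive at the same time while preserving measurability, and the device that makes this work is always to select the \emph{least} admissible index, so that each $\sigma_k$ is assembled from a countable measurable partition of $\Omega$; the remaining ingredients (separability, completeness, the Lipschitz property of $\delta_\omega$) are entirely routine.
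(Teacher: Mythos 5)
Your proof is correct: it is the standard Kuratowski--Ryll-Nardzewski successive-approximation argument, and each step checks out (the base case via the sets $A_n$, the existence of an admissible index in the induction thanks to the triangle-inequality choice $d(x_n,y)<\min\{2^{-k},\,2^{-k+1}-d(v,y)\}$, measurability via the least-index device on a countable measurable partition, and the passage to the limit using completeness and the continuity of $d(\cdot,\Gamma(\omega))$). Note, however, that the paper itself gives no proof of this statement: it is quoted verbatim from Castaing--Valadier (Theorem III.30), so there is no internal argument to compare with; your proof is essentially the one found in that reference, and your side remark that the measure $\mu$ (and its completeness and $\sigma$-finiteness) is never used -- only the $\sigma$-algebra $\Sigma$ together with hypotheses (i)--(ii) -- is accurate as well.
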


Now we are going to prove the main result of this section:

\begin{theorem}
\label{EF-implica-VF} Let $(\varphi_{s,t})$ be an evolution
family of order $d$ in the unit disc. Then there exists a
Herglotz vector field $G$ which has positive trajectories
$(\varphi_{s,t})$; namely, for any $(z,s)\in\mathbb{D}\times
\lbrack0,+\infty)$, the positive trajectory of the vector field
$G$ with initial data $(z,s)$ is exactly $\lbrack s,+\infty)\ni
t\rightarrow\varphi_{s,t}(z)$.
\end{theorem}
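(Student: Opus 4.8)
The plan is to reconstruct the vector field $G$ from the evolution family by differentiating $\varphi_{s,t}(z)$ in $t$ at $t=s$, but since we only have $L^d$-regularity we cannot differentiate at a single point; instead I would proceed as follows. By Proposition \ref{EF-continuidadabsoltuta}, for each fixed $z\in\mathbb D$ and $s\geq 0$ the map $t\mapsto\varphi_{s,t}(z)$ is locally absolutely continuous, hence differentiable for almost every $t$. The key point is to produce a \emph{single} measurable function $G(w,t)$, holomorphic in $w$, such that for almost every $t$ one has $\tfrac{\partial}{\partial t}\varphi_{s,t}(z)=G(\varphi_{s,t}(z),t)$ simultaneously for all $s$ and all $z$. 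The natural candidate: fix a countable dense set $D\subset\mathbb D$ and a countable dense set $S\subset[0,+\infty)$; for $s\in S$ and $z\in D$, the derivative $\tfrac{\partial}{\partial t}\varphi_{s,t}(z)$ exists for a.e. $t\geq s$, and using the cocycle identity EF2 one expects the quantity $\tfrac{\partial}{\partial t}\varphi_{s,t}(z)$ to depend, for almost every $t$, only on the value $w=\varphi_{s,t}(z)$ and on $t$, not on $(s,z)$ separately. This is because $\varphi_{s,t}=\varphi_{u,t}\circ\varphi_{s,u}$ gives, formally, $\tfrac{\partial}{\partial t}\varphi_{s,t}(z)=\tfrac{\partial}{\partial t}\varphi_{u,t}(\varphi_{s,u}(z))$, so the "time-derivative at $t$" is a function of the current position alone.

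First I would make this rigorous by defining, for a fixed base point $s_0$ (say $s_0$ ranging over a countable dense set) and the flow lines through it, the right logarithmic derivative and showing that along different trajectories passing through the same point $w$ at the same time $t$ the derivatives agree for a.e. $t$; this is where the measurable-selection Theorem \ref{seleccion medible} enters, to choose for each $(w,t)$ a trajectory passing through $w$ at time $t$ in a measurable way, and to define $G(w,t)$ as the corresponding derivative. Concretely, consider the multifunction assigning to $t$ the set of pairs $(z,s)$ (with $s$ rational, $z$ in a countable dense set) such that $\varphi_{s,t}(z)$ is close to a prescribed point; after checking the measurability hypothesis (ii) using joint continuity from Proposition \ref{EF-continuidad}, one extracts a measurable selector and sets $G(w,t):=\lim$ of difference quotients along the selected trajectory. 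Then one shows $G(\cdot,t)$ is holomorphic in $w$ (for a.e. $t$) by exploiting that the difference quotients $\tfrac{\varphi_{s,t+h}(z)-\varphi_{s,t}(z)}{h}$ are holomorphic in $z$ for each $h$ and pass to a holomorphic limit in the Fréchet space $\mathrm{Hol}(\mathbb D,\mathbb C)$, using the uniform bounds coming from EF3 together with Lemma \ref{EF-acotacion} and Cauchy estimates as in Lemma \ref{WHVF4}; the $L^d$-bound on $t\mapsto\sup_{z\in K}|G(z,t)|$ follows from EF3 and the Cauchy-estimate argument, giving WHVF1--WHVF3.

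Next, having constructed the weak holomorphic vector field $G$ of order $d$, I would verify that it actually generates $(\varphi_{s,t})$: for fixed $z,s$, the function $x(t):=\varphi_{s,t}(z)$ is locally absolutely continuous and, by the construction of $G$ together with the cocycle identity, satisfies $\overset{\bullet}{x}(t)=G(x(t),t)$ for almost every $t\geq s$; by uniqueness of solutions to the Carathéodory ODE (which applies since $G$ satisfies WHVF1--WHVF3, cf. Lemma \ref{WHVF4}), $\varphi_{s,t}(z)$ \emph{is} the positive trajectory, in particular the escaping time is $+\infty$. Finally, to see that $G$ is a \emph{Herglotz} vector field one must check that $G(\cdot,t)\in\mathrm{Gen}(\mathbb D)$ for a.e. $t$: this uses the fact that for $s<t$ the map $\varphi_{s,t}$ is a holomorphic self-map of $\mathbb D$, hence $\varphi_{s,t}-\mathrm{id}$ is an infinitesimal generator (the example recalled after Proposition \ref{BP-continuidad}, \cite[Corollary 3.3.1]{Shoikhet}), and then $G(\cdot,t)=\lim_{h\to 0^+}\tfrac{1}{h}(\varphi_{t,t+h}-\mathrm{id})$ is a limit in $\mathrm{Hol}(\mathbb D,\mathbb C)$ of elements of the closed convex cone $\mathrm{Gen}(\mathbb D)$, hence belongs to $\mathrm{Gen}(\mathbb D)$; one must argue that this limit through positive increments $h\to 0^+$ coincides with $G(\cdot,t)$ for a.e. $t$, which again follows from absolute continuity and the density/selection setup.

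The main obstacle I expect is the \emph{well-definedness and measurability of $G(w,t)$}: showing that the time-derivative along a trajectory at time $t$ depends only on the current position, not on which trajectory or which starting data, \emph{for almost every $t$ with a null set independent of $(w,s,z)$}, and packaging this into a genuinely measurable, holomorphic-in-$w$ function. Handling the interchange of the "almost every $t$" quantifier with the uncountably many choices of $(s,z)$ is exactly what forces the use of countable dense sets plus the measurable-selection theorem, and getting the holomorphicity of $G(\cdot,t)$ for a.e. $t$ (rather than merely that $G(w,\cdot)$ is measurable for each fixed $w$) requires the Vitali/normal-families argument combined with the uniform $L^d$-type bounds. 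Once $G$ is constructed with these properties, verifying the ODE and the generator property is comparatively routine given the earlier lemmas.
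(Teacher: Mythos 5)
Your overall skeleton (construct a candidate $G$, check WHVF1--WHVF3, verify the ODE along each trajectory, get $G(\cdot,t)\in\mathrm{Gen}(\mathbb D)$ from closedness of the cone) agrees with the paper, and you correctly identify the measurable selection theorem as the device for handling the a.e.\ quantifiers. However, there are two genuine gaps in the construction of $G$. First, defining $G(w,t)$ as ``the limit of difference quotients along a trajectory selected through $(w,t)$'' is not well founded. A generic point $w$ at time $t$ lies on no trajectory issuing from your countable dense families (the images $\varphi_{s,t}(\mathbb D)$ are proper subdomains, and at each fixed time the selected trajectories occupy only countably many points), so the only trajectory through $(w,t)$ always available is the one starting there; then the quantity you need is the forward derivative of $h\mapsto\varphi_{t,t+h}(w)$ at $h=0$, i.e.\ a derivative at the single initial time, which Proposition \ref{EF-continuidadabsoltuta} does not provide: absolute continuity gives differentiability a.e.\ in the running variable for fixed initial data, and the initial time may well lie in the exceptional set. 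The paper never asserts that this limit exists: it only obtains, via EF3 and the Hardy--Littlewood maximal theorem, \emph{boundedness} of $n(\varphi_{t,t+1/n}(z)-z)$ at the two points $z=0,1/2$ for a.e.\ $t$, takes the multifunction of \emph{accumulation points} of $g_{n,t}:=n(\varphi_{t,t+1/n}-\id)$ in $\mathrm{Hol}(\mathbb D,\mathbb C)$, and applies Theorem \ref{seleccion medible} to choose one measurably; the genuine derivative is matched with $G$ only afterwards, along each fixed trajectory and at times where the a.e.\ derivative in the running variable exists, using EF2 exactly as you suggest.

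Second, the compactness and order-$d$ bounds cannot be extracted from EF3 together with Cauchy estimates, as you propose. EF3 gives, for each fixed $z$, a majorant $k_{z,T}$ and an exceptional null set both depending on $z$; one cannot take a supremum over a compact set (or over a circle, as a Cauchy estimate requires) of uncountably many such bounds. The ingredient that makes the paper's argument work --- and which you invoke only at the very end, to conclude $G(\cdot,t)\in\mathrm{Gen}(\mathbb D)$ --- is that $g_{n,t}\in\mathrm{Gen}(\mathbb D)$ already, since $\varphi_{t,t+1/n}-\id$ is an infinitesimal generator and $\mathrm{Gen}(\mathbb D)$ is a cone. This is what allows Lemma \ref{Lema de los dos puntos} (Berkson--Porta representation plus distortion estimates for Carath\'eodory functions) to convert boundedness at just two points into normality, hence nonemptiness of the multifunction; and the analogous representation $G(z,s)=a_s-\overline{a_s}z^2-zq_s(z)$ converts the pointwise bounds at $z=0$ and $z=1/2$ into the WHVF3 bound on compacta. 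Without using the generator structure at this early stage, both the holomorphy in $w$ of your limit and the $L^d$ estimate on compacta remain unsupported.
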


\begin{proof}
The proof of this theorem is rather long and has three main
parts which will be exposed separately. In short: (a)
construction of a candidate function
$G:\mathbb{D}\times\lbrack0,+\infty)$ $\mathbb{\rightarrow C}$
verifying that $G(\cdot,s)\in \mathrm{Gen}(\mathbb{D}),$ for
all $s\geq0$; (b) checking that $G$ is a weak holomorphic
vector field; (c) verification of the assertion of the theorem.

\textit{Part (a):} We are going to apply Theorem \ref{seleccion
medible} to a suitably chosen $\Gamma: [0,+\infty)\to
2^{\rm{Hol}(\D,\C)}$, where the set $[0,+\infty)$ is endowed
with the Lebesgue measure and ${\rm{Hol}(\D,\C)}$ has its
natural structure of Fr\'echet space.

Fix $z\in\mathbb{D}$ and $T>0$. Let $k:=k_{z,T}\in
L^{d}([0,T+1],\mathbb{R})$ be the non-negative function given
by EF3. We extend $k$ to all of $\R$ by setting zero outside
the interval $[0,T+1]$. Then for $0\leq s\leq T$ and every
$n\in\mathbb{N}$
\begin{equation*}
n\left\vert
\varphi(z,s,s+\frac{1}{n})-\varphi(z,s,s)\right\vert \leq
n\int_{s}^{s+1/n}k(\xi)d\xi\leq {\sf Max}_{k}(s),
\end{equation*}
where
\[
{\sf Max}_{k}(s):=\sup\left\{
\frac{1}{|I|}\int_{I}k(\xi)d\xi:\text{ }I\text{ is a closed
interval of the real line and }s\in I\right\}
\]
is the so-called maximal function associated with $k$. Since
 $k\in L^{1}(\mathbb{R},\mathbb{R})$, by
Hardy-Littlewood maximal theorem there exists a subset
$N(T,z)\subset\lbrack0,+\infty)$ of zero measure such that
${\sf Max}_{k}(s)<+\infty$ for every $s\in\lbrack0,T]\setminus
N(T,z).$ Let $N(z):=\cup_{m\in \N}N(m,z)$. Then for all
$s\in\lbrack
0,+\infty)\setminus N(z)$%
\begin{equation}\label{claima}
\sup_{n}\left\vert n(\varphi(z,s,s+\frac{1}{n})-z)\right\vert
<+\infty.
\end{equation}

Let $M:=N(0)\cup N(1/2).$ Clearly, $M$ is a  subset of
$[0,+\infty)$ of zero measure. We let
\[
\Gamma:[0,+\infty)\rightarrow2^{\mathrm{Hol}(\mathbb{D},\mathbb{C)}},\text{
}s\mapsto\Gamma(s)=\left\{
\begin{array}
[c]{ll}%
{\sf ac}(g_{n,s}) & s\notin M,\\
\{\id\} & s\in M,
\end{array}
\right.
\]
where
$g_{n,s}:=n(\varphi_{s,s+1/n}-\id)\in\mathrm{Hol}(\mathbb{D},\mathbb{C)}$
and ${\sf ac}(g_{n,s})$ denotes the accumulation points of the
sequence $\{g_{n,s}\}_{n}$ in the metric space
$\mathrm{Hol}(\mathbb{D},\mathbb{C)}$. The multifunction
$\Gamma$ is well-defined and, since
$\mathrm{Hol}(\mathbb{D},\mathbb{C)}$ is a metric space,
$\Gamma(s)$ is a closed subset of
$\mathrm{Hol}(\mathbb{D},\mathbb{C)}$ for every $s\geq0.$

Next step is to prove that $\Gamma(s)$ is  non-empty for all
$s\geq 0$. This is true by definition if $s\in M$. Thus, fix
$s\in [0,+\infty)\setminus M$. As recalled in section
\ref{preli}, $\varphi_{s,s+1/n}-\id$ belongs to
$\mathrm{Gen}(\mathbb{D})$ for all $n\in\mathbb{N}$. Moreover,
$\mathrm{Gen}(\mathbb{D})$ is a real cone in
$\mathrm{Hol}(\mathbb{D},\mathbb{C)}$, thus $\{g_{n,s}\}$ is a
sequence in $\mathrm{Gen}(\mathbb{D})$. By the very definition
of $M$,
\[
\max\{\sup_{n}|g_{n,s}(0)|,\sup_{n}|g_{n,s}(1/2)|\}<+\infty.
\]
Hence, we can apply Lemma \ref{Lema de los dos puntos} and
conclude that the sequence $\{g_{n,s}\}$ has accumulation
points in $\mathrm{Hol}(\mathbb{D},\mathbb{C)}$, so that
$\Gamma(s)$ is not empty. Thus $\Gamma$ satisfies hypothesis
(i) of Theorem~\ref{seleccion medible}.

In order to check condition (ii) in Theorem \ref{seleccion
medible} for $\Gamma$, we fix
$f\in\mathrm{Hol}(\mathbb{D},\mathbb{C)}$ and $r>0.$ Since $M$
has zero measure, we have only to prove that
\[
A_{f,r}=\{s\in\lbrack0,+\infty)\setminus M:\exists g\in {\sf
ac}(g_{n,s})\text{ with }d_{H}(f,g)<r\}
\]
is Lebesgue measurable, where $d_{H}$ is the canonical
Fr\'{e}chet distance defining the topology of
$\mathrm{Hol}(\mathbb{D},\mathbb{C)}$. Bearing in mind Lemma
\ref{Lema de los dos puntos} and the argument above, we see
that
\[
A_{f,r}:=%
{\textstyle\bigcup_{l=2}^{\infty}}
{\textstyle\bigcap_{n=1}^{\infty}}
{\textstyle\bigcup_{k=n}^{\infty}}
\{s\in\lbrack0,+\infty)\setminus M:\text{ }d_{H}(f,g_{k,s})<r\left(
1-\frac{1}{l}\right)  \}.
\]
Hence, it is enough to prove that, for every $k\in\mathbb{N}$, every $s\geq0$
and every $r^{\ast}>0,$ the subset
\[
B_{k,f,r^{\ast}}:=\{s\in\lbrack0,+\infty)\setminus M:\text{ }d_{H}%
(f,g_{k,s})<r^{\ast}\}
\]
is Lebesgue measurable. Since the functions $\lbrack0,+\infty
)\ni s
\mapsto\varphi_{s,s+1/k}\in\mathrm{Hol}(\mathbb{D},\mathbb{C)}$
are continuous (see Proposition \ref{EF-continuidad}), then
$\lbrack0,+\infty)\ni s\mapsto P_{k}(s):=g_{k,s}\in\mathrm{Hol}(\mathbb{D}%
,\mathbb{C)}$ is also continuous for every $k\in\mathbb{N}.$ Therefore, the
inverse image by $P_{k}$ of $B(f,r^{\ast})$ (the open ball in $\mathrm{Hol}%
(\mathbb{D},\mathbb{C)}$ with center $f$ and radius $r^{\ast})$
is an open subset of $\ [0,+\infty)$. Since
\[
B_{k,f,r^{\ast}}=P_{k}^{-1}(B(f,r^{\ast}))\setminus M,
\]
then $B_{k,f,r^{\ast}}$ is Lebesgue measurable.

Therefore, the multifunction $\Gamma$ satisfies the hypotheses
of Theorem \ref{seleccion medible}.  Thus there exists a
measurable selector $\sigma
:[0,+\infty)\rightarrow\mathrm{Hol}(\mathbb{D},\mathbb{C)}$ for
$\Gamma.$ We define
$G:\mathbb{D}\times\lbrack0,+\infty)\rightarrow\mathbb{C} $ by
\[
G(z,s):=\sigma\lbrack s](z),\quad\text{ for
}z\in\mathbb{D}\text{ and }s\geq0.
\]
Bearing in mind the definition of accumulation points in metric
spaces, we deduce that, for every
$s\in\lbrack0,+\infty)\setminus M,$ there exists a  strictly
increasing sequence $\{n_{k}(s)\}$ of natural numbers such
that, for all $z\in\mathbb{D},$
\begin{equation}\label{Glimit}
G(z,s):=\lim_{k\to \infty}n_{k}(s)(\varphi(z,s,s+1/n_{k}(s))-z)
\end{equation}
and the convergence is uniform on compacta of $\mathbb{D}.$ In
particular, because $\mathrm{Gen}(\mathbb{D})$ is a closed
subset of $\mathrm{Hol}(\mathbb{D},\mathbb{C)}$ (see
\cite[Consequence of Theorem 1.4.14]{Abate} or
\cite[p.76]{Shoikhet}), we see that $z\mapsto G(z,s)$ belongs
to $\mathrm{Gen}(\mathbb{D}),$ for every
$s\in\lbrack0,+\infty)\setminus M$. Moreover, bearing in mind
that $z\mapsto G(z,s)=z,$ for every $s\in M,$ we  deduce that
$z\mapsto G(z,s)$ belongs to $\mathrm{Gen}(\mathbb{D}),$ for
every $s\in\lbrack0,+\infty).$

\textit{Part (b): } According to Definition
\ref{Definicion-VF}, we have to check WHVF1, WHVF2 and WHVF3.
Fixing $z\in\mathbb{D},$ we see that by the very definition,
\[
\lbrack0,+\infty)\ni s\mapsto G(z,s)\in\mathbb{C}
\]
is the composition of the measurable selector $\sigma$ and the
 continuous functional of
$\mathrm{Hol}(\mathbb{D},\mathbb{C)}$ given by evaluation at
$z$. Thus,  WHVF1 holds. Also, WHVF2 holds trivially by the
very definition.

To prove property WHVF3, we argue as follows. Fix $z\in \D$ and
$T>0$. By EF3, there exists $k_z\in L^{d}([0,T+1],\mathbb{R})$
non-negative such that
\[
\left\vert \varphi(z,t,t+\frac{1}{n})-\varphi(z,t,t)\right\vert
\leq\int _{t}^{t+1/n}k_z(\xi)d\xi,
\]
for $0\leq t\leq T$ and every $n\in\mathbb{N}.$ The map
$s\mapsto\int_{s}^{s+1/n}k_z(\xi)d\xi$ is differentiable with
derivative $k(s)$ in $\lbrack0,T]$ outside a set $N_0(z,T)$ of
zero measure. Let $N(z,T):=M\cup N_{0}(z,T)$. Then  for every
$s\in\lbrack0,T]\setminus N(z,T),$
\[
\left\vert n_{k}(s)(\varphi(z,s,s+1/n_{k}(s))-z)\right\vert
\leq n_{k}(s)\int_{s}^{s+1/n_{k}(s)}k_z(\xi)d\xi.
\]
Taking limit in $k,$  by \eqref{Glimit}, we conclude that
\begin{equation}\label{Gtrivia}
|G(z,s)|\leq k_{z}(s)
\end{equation}
for almost every $s\in[0,T]$.

Now fix $r\in(0,1)$ and $T>0.$ By Part (a), we know that
$z\mapsto G(z,s)$ belongs to $\mathrm{Gen}(\mathbb{D}),$ for
every $s\in\lbrack0,+\infty)$. By \cite[Section
3.5]{Shoikhet}, there exist $a_{s}\in\mathbb{C}$ and $q_{s}\in\mathrm{Hol}%
(\mathbb{D},\mathbb{C})$ with $\Re q_{s}\geq0$ and
\[
G(z,s)=a_{s}-\overline{a_{s}}z^{2}-zq_{s}(z),\text{ }z\in\mathbb{D},\text{
}s\geq0.
\]
Since $G(0,s)=a_{s},$ equation \eqref{Gtrivia} provides a
function $k_{0}\in L^{d}([0,T],\mathbb{R})$ such that
\[
|a_{s}-\overline{a_{s}}z^{2}|\leq2k_{0}(t),\quad\text{ for
}s\in\lbrack0,T]\text{ and }|z|\leq r.
\]
Again by \eqref{Gtrivia}, we can find another function
$k_{1/2}\in L^{d}([0,T],\mathbb{R})$ such that
\[
|G(1/2,s)|\leq k_{1/2}(s),\text{ for }s\in\lbrack0,T].
\]
Therefore, for $s\in\lbrack0,T],$ we have%
\[
|q_{s}(1/2)|\leq2|a_{s}-\frac{1}{4}\overline{a_{s}}|+2|G(1/2,s)|\leq
3k_{0}(s)+2k_{1/2}(s).
\]
Since $s\mapsto G(z,s)$ is measurable for all fixed $z\in \D$,
it follows that both maps $s\mapsto a_{s}$ and $s\mapsto
q_{s}(1/2)$ belong to $L^{d}([0,T],\mathbb{C}).$ Now,  the
distortion theorem for Carath\'{e}odory functions \cite[pages
39-40]{Pommerenke} shows that, when $|z|\leq r$ and
$s\in\lbrack0,T]$
\begin{align*}
|G(z,s)|  & \leq2k_{0}(s)+|q_{s}(z)|\leq2k_{0}(s)+\frac{1+|z|}{1-|z|}%
|q_{s}(0)|\\
& \leq2k_{0}(s)+\frac{1+|z|}{1-|z|}\frac{1+1/2}{1-1/2}|q_{s}(1/2)|\\
& \leq2k_{0}(s)+\frac{1+r}{1-r}(9k_{0}(s)+6k_{1/2}(s)),
\end{align*}
showing WHVF3.

\textit{Part (c): }We have to prove that, given
$(z,s)\in\mathbb{D} \times\lbrack0,+\infty)$, the positive
trajectory of the weak holomorphic vector field $G$ with
initial data $(z,s)$ is exactly
\[
\lbrack s,+\infty)\ni t\rightarrow\varphi(z,s,t).
\]
Recall that by Proposition \ref{EF-continuidadabsoltuta}, this
function is absolutely continuous in $[s,+\infty)$ and
$\varphi(z,s,s)=z$. Thus we have only to show that  for almost
every $t\in(s,+\infty)$
\[
\frac{\partial\varphi}{\partial t}(z,s,t):=\lim_{h\longrightarrow0}%
\frac{\varphi(z,s,t+h)-\varphi(z,s,t)}{h}=G(\varphi(z,s,t),t).
\]
Let us fix $z\in\mathbb{D}$ and $s\geq0$. Let
$N_{1}(z,s)\subset\lbrack s,+\infty)$ be a set of zero measure
such that $\lbrack s,+\infty )\ni t\rightarrow\varphi(z,s,t)$
is differentiable  for every $t\in(s,+\infty)\setminus
N_{1}(z,s)$. Let $M$ be the set of zero measure defined in Part
(a). Then, for every $t\in(s,+\infty)\setminus(N_{1}(z,s)\cup
M)$,
\begin{align*}
\frac{\partial\varphi}{\partial t}(z,s,t)  & =\lim_{k}\frac{\varphi
(z,s,t+1/n_{k}(t))-\varphi(z,s,t)}{1/n_{k}(t)}\\
& =\lim_{k}n_{k}(t)\left(  \varphi(\varphi(z,s,t),t,t+1/n_{k}(t))-\varphi
(z,s,t)\right) \\
& =G(\varphi(z,s,t),t),
\end{align*}
and we are done.
\end{proof}

As a consequence of the previous results we have the following
interesting fact:

\begin{corollary}
\label{Univalencia} Let $(\varphi_{s,t})$ be an evolution
family of order $d\geq 1$ in the unit disc. Then, every
$\varphi_{s,t}$ is univalent.
\end{corollary}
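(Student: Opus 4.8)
The plan is to reduce univalence to a backward uniqueness statement for the Carath\'eodory ODE associated with the evolution family. By Theorem~\ref{EF-implica-VF} there is a Herglotz vector field $G$ such that, for every $(z,s)$, the map $[s,+\infty)\ni \xi\mapsto\varphi_{s,\xi}(z)$ is the positive trajectory of $G$ through $(z,s)$; in particular it is locally absolutely continuous (Proposition~\ref{EF-continuidadabsoltuta}) and satisfies the integral equation $\varphi_{s,\xi}(z)=\varphi_{s,v}(z)+\int_v^\xi G(\varphi_{s,\eta}(z),\eta)\,d\eta$ for $s\le v\le\xi$. Fix $0\le s<t$ and suppose $\varphi_{s,t}(z_1)=\varphi_{s,t}(z_2)$ for some $z_1,z_2\in\D$; I must show $z_1=z_2$.

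First I would pick $r\in(0,1)$ with $|z_1|,|z_2|\le r$ and $T>t$, and apply Lemma~\ref{EF-acotacion} to obtain $R=R(r,T)<1$ such that $|\varphi_{s,\xi}(z_j)|\le R$ for all $\xi\in[s,t]$ and $j=1,2$; hence both trajectories stay in the compact set $K:=\overline{\D(0,R)}\subset\D$. By Lemma~\ref{WHVF4} there is a non-negative $\widehat k:=\widehat k_{K,T}\in L^d([0,T],\R)\subset L^1([0,T],\R)$ with $|G(z,\eta)-G(w,\eta)|\le\widehat k(\eta)|z-w|$ for all $z,w\in K$ and almost every $\eta$. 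Setting $\theta(\xi):=|\varphi_{s,\xi}(z_1)-\varphi_{s,\xi}(z_2)|$, which is continuous by Proposition~\ref{EF-continuidadabsoltuta}, and subtracting the integral equations for $z_1$ and $z_2$ written between times $v$ and $t$, I get for all $s\le v\le t$
\[
\theta(v)\le\theta(t)+\int_v^t\widehat k(\eta)\,\theta(\eta)\,d\eta .
\]
The backward form of Gronwall's Lemma~\ref{Gronwall} with $C=\theta(t)$ then yields $\theta(v)\le\theta(t)\exp\big(\int_v^t\widehat k(\eta)\,d\eta\big)$, and evaluating at $v=s$ gives
\[
|z_1-z_2|=\theta(s)\le|\varphi_{s,t}(z_1)-\varphi_{s,t}(z_2)|\exp\Big(\int_s^t\widehat k(\eta)\,d\eta\Big)=0 ,
\]
so $z_1=z_2$. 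Together with the trivial case $t=s$ (where $\varphi_{s,s}=\id_\D$), this shows every $\varphi_{s,t}$ is univalent.

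I do not expect a genuine obstacle here: the estimate is exactly of the type already carried out in the proof of Theorem~\ref{Herglotz-implica-EF} to establish inequality \eqref{Eq_Univalencia}, only now performed globally rather than for small $h$. The one point that requires a little care is making sure that \emph{both} trajectories lie in a single compact subset of $\D$ over the whole interval $[s,t]$, so that the spatial Lipschitz bound of Lemma~\ref{WHVF4} applies with a fixed $L^1$ function $\widehat k$; this is supplied by Lemma~\ref{EF-acotacion}. Conceptually, the corollary is nothing more than backward uniqueness for $\dot x=G(x,\eta)$ with a right-hand side that is locally in $x$ Lipschitz with $L^1$ constant.
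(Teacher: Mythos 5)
Your proof is correct and is essentially the paper's own argument: the authors deduce the corollary from Theorem \ref{EF-implica-VF} together with inequality \eqref{Eq_Univalencia}, which is exactly the backward Gronwall estimate you derive from the Lipschitz bound of Lemma \ref{WHVF4} and Lemma \ref{Gronwall}. The only difference is that you carry out the estimate for an arbitrary pair of points $z_1,z_2$ (using Lemma \ref{EF-acotacion} to keep both trajectories in one compact set), whereas the paper's displayed inequality is stated for $z$ and $z+h$ and is implicitly applied in this global form.
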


\begin{proof}
By Theorem \ref{EF-implica-VF} the elements of the evolution
family $(\varphi_{s,t})$ are trajectories of a weak holomorphic
vector fields. By inequality (\ref{Eq_Univalencia}) they are
univalent in the unit disc.
\end{proof}

\begin{theorem}
\label{Derivacion independiente de z (en la t)} Let
$(\varphi_{s,t})$ be an evolution family of order $d\geq 1$ in
the unit disc.

\begin{enumerate}
\item For every $s\geq0,$ there exists a set $M(s)\subset\lbrack s,+\infty)$ (not depending on $z$) of
zero measure  such that, for every $t\in(s,+\infty )\setminus
M(s)$, the function
\[
\mathbb{D}\ni z\mapsto\frac{\partial\varphi}{\partial
t}(z,s,t)=\lim
_{h\rightarrow0}\frac{\varphi_{s,t+h}(z)-\varphi_{s,t}(z)}{h}\in\mathbb{C}%
\]
is a well-defined holomorphic function on $\mathbb{D}.$

\item Let $G:\mathbb{D}\times\lbrack0,+\infty)$ $\mathbb{\rightarrow C}$ be a
Herglotz vector field whose positive trajectories are
$(\varphi_{s,t})$. Fixed $s\geq0$. Then there exists a set
$M(s)\subset\lbrack s,+\infty)$ (not depending on $z$) of zero
measure  such that, for every
$t\in(s,+\infty)\setminus M(s)$ and every $z\in\mathbb{D}$, it holds that%
\begin{equation}\label{diffGev}
\frac{\partial\varphi}{\partial t}(z,s,t)=G(\varphi_{s,t}(z),t).
\end{equation}
\end{enumerate}
\end{theorem}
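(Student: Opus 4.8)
The plan is to derive both assertions from the identity principle for holomorphic functions together with Vitali's convergence theorem, once a single exceptional null set $M(s)$, independent of $z$, has been produced. Fix $s\ge 0$ and let $G$ be a Herglotz vector field of order $d$ whose positive trajectories are $(\varphi_{s,t})$: for part~(1) such a $G$ exists by Theorem~\ref{EF-implica-VF}, and for part~(2) we take $G$ to be the one given in the statement. Then for every $z\in\D$ the map $[s,+\infty)\ni\xi\mapsto\varphi_{s,\xi}(z)$ is locally absolutely continuous with $\overset{\bullet}{\varphi}_{s,\xi}(z)=G(\varphi_{s,\xi}(z),\xi)$ for a.e.\ $\xi$, whence
\begin{equation*}
\varphi_{s,t+h}(z)-\varphi_{s,t}(z)=\int_t^{t+h}G(\varphi_{s,\xi}(z),\xi)\,d\xi
\end{equation*}
for all $t>s$ and all $h$ with $|h|$ small enough that $t+h>s$.

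First I would fix a countable dense set $D=\{z_j:j\in\N\}\subset\D$ and, for each $j$, let $N_j\subset(s,+\infty)$ be the null set of those $t$ at which either $t\mapsto\varphi_{s,t}(z_j)$ is not differentiable or $\overset{\bullet}{\varphi}_{s,t}(z_j)\neq G(\varphi_{s,t}(z_j),t)$ (a null set by Proposition~\ref{EF-continuidadabsoltuta}(1) and the trajectory property of $G$). To obtain the bounds needed below, fix $r_m\uparrow 1$ and $T_m\uparrow+\infty$ with $T_m>s$: by Lemma~\ref{EF-acotacion} there is $R_m=R(r_m,T_m)<1$ with $|\varphi_{s,t}(z)|\le R_m$ whenever $|z|\le r_m$ and $s\le t\le T_m$, and by WHVF3 applied to $G$ there is a non-negative $\kappa_m\in L^d([0,T_m],\R)$ with $|G(w,\xi)|\le\kappa_m(\xi)$ whenever $|w|\le R_m$, for a.e.\ $\xi\in[0,T_m]$. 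Extending $\kappa_m$ by $0$ to $\R$, the Hardy--Littlewood maximal theorem yields a null set $P_m$ outside of which the maximal function ${\sf Max}_{\kappa_m}$ is finite. Then set
\begin{equation*}
M(s):=\Bigl(\bigcup_{j\in\N}N_j\Bigr)\cup\Bigl(\bigcup_{m\in\N}P_m\Bigr),
\end{equation*}
a null subset of $(s,+\infty)$ not depending on $z$.

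The key step is a uniform estimate on difference quotients. Fix $t\in(s,+\infty)\setminus M(s)$. Given $z\in\D$, choose $m$ with $|z|\le r_m$ and $t<T_m$; then for $h\neq 0$ small enough that $t+h\in(s,T_m)$ one has $|\varphi_{s,\xi}(z)|\le R_m$ for every $\xi$ between $t$ and $t+h$, so the displayed integral identity and the bound $|G(\varphi_{s,\xi}(z),\xi)|\le\kappa_m(\xi)$ give
\begin{equation*}
\left|\frac{\varphi_{s,t+h}(z)-\varphi_{s,t}(z)}{h}\right|\le\frac{1}{|h|}\left|\int_t^{t+h}\kappa_m(\xi)\,d\xi\right|\le{\sf Max}_{\kappa_m}(t)<+\infty,
\end{equation*}
uniformly for $|z|\le r_m$ and all such $h$. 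Hence the holomorphic functions $f_h:=(\varphi_{s,t+h}-\varphi_{s,t})/h$ form a locally uniformly bounded family on $\D$; since $t\notin N_j$, $f_h(z_j)$ converges as $h\to0$ for every $j$, so, applying Vitali's theorem to arbitrary sequences $h_n\to 0$, the family $f_h$ converges uniformly on compacta of $\D$ as $h\to 0$ to a holomorphic function, which is exactly $z\mapsto\frac{\partial\varphi}{\partial t}(z,s,t)$. This is assertion~(1). For assertion~(2), note that $z\mapsto G(\varphi_{s,t}(z),t)$ is holomorphic, being the composition of the holomorphic map $\varphi_{s,t}$ with $G(\cdot,t)$ (WHVF2), and that it coincides with $z\mapsto\frac{\partial\varphi}{\partial t}(z,s,t)$ on the dense set $D$ because $t\notin N_j$ for all $j$; by the identity principle the two holomorphic functions coincide on all of $\D$, which is \eqref{diffGev}.

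The main obstacle is producing the estimate on $f_h$ that is uniform simultaneously in $z$ over compact subsets of $\D$ and in the small increments $h$, for a \emph{fixed} $t$: the naive estimate $\frac{1}{|h|}\int_t^{t+h}\kappa_m\to\kappa_m(t)$ holds only at Lebesgue points of $\kappa_m$ and is not uniform in $h$, so one must replace it by the uniform-in-$h$ bound ${\sf Max}_{\kappa_m}(t)$, which is finite at a.e.\ $t$ by Hardy--Littlewood. The remaining ingredients --- local absolute continuity of the trajectories, the $L^d$-domination of $G$ along $\xi\mapsto\varphi_{s,\xi}(z)$ provided by WHVF3 together with Lemma~\ref{EF-acotacion}, Vitali's theorem, and the identity principle --- are routine.
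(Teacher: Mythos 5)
Your proposal is correct, but it takes a genuinely different route from the paper for part (1). The paper never bounds the difference quotients via the vector field: for $h>0$ it writes $\frac{1}{h}(\varphi_{s,t+h}-\varphi_{s,t})=f_h\circ\varphi_{s,t}$ with $f_h=\frac{1}{h}(\varphi_{t,t+h}-\id)\in\mathrm{Gen}(\mathbb{D})$ (and similarly for $h<0$), proves relative compactness by contradiction using Lemma \ref{Lema de los dos puntos} at the two points $\varphi(1/2,s,t)$, $\varphi(1/3,s,t)$ (distinct by univalence), taking the exceptional set $N(s)=\bigcup_n N(\tfrac{1}{n+1},s)$ built only from differentiability of $t\mapsto\varphi_{s,t}(\tfrac{1}{n+1})$, and then identifies all sublimits via the identity principle on the sequence $\{\tfrac{1}{n+1}\}$. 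You instead invoke Theorem \ref{EF-implica-VF} to get the Herglotz field $G$, write the difference quotient as an average of $G$ along the trajectory, and dominate it by ${\sf Max}_{\kappa_m}(t)$ via WHVF3, Lemma \ref{EF-acotacion} and the Hardy--Littlewood maximal theorem (the same device the paper uses in Part (a) of the proof of Theorem \ref{EF-implica-VF}), after which Vitali on a countable dense set gives locally uniform convergence. Both approaches ultimately rest on Theorem \ref{EF-implica-VF} (the paper through Corollary \ref{Univalencia}), but the paper's argument for (1) is ``soft'': it needs only the cone structure of $\mathrm{Gen}(\mathbb{D})$ and pointwise differentiability at countably many interior points, with no quantitative input from $G$; yours is more direct and quantitative, yielding an explicit locally uniform bound on the difference quotients and a unified treatment in which part (2) (identity principle applied to $z\mapsto G(\varphi_{s,t}(z),t)$ and the limit function, agreeing on a countable set with accumulation points) is carried out exactly as in the paper, which uses a sequence $z_n\to 0$ rather than a dense set.
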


\begin{proof}
(1) Fix $s\geq0.$ By Proposition \ref{EF-continuidadabsoltuta}
 the map $\lbrack s,+\infty)\ni t
\mapsto \varphi(z,s,t)\in\mathbb{C}$ is absolutely continuous
in $[s,+\infty)$ for all fixed $z\in\mathbb{D}$. Thus there
exists a  set of zero measure $N(z,s)\subset\lbrack s,+\infty)$
such that, for every $t\in(s,+\infty)\setminus N(z,s)$ the
following limit exists
\[
D_{s,t}(z)=\frac{\partial\varphi}{\partial t}(z,s,t)=\lim_{h\rightarrow0}%
\frac{\varphi(z,s,t+h)-\varphi(z,s,t)}{h}.
\]
Now, define
\[
N(s):=%
{\textstyle\bigcup_{n=1}^{\infty}}
N(\frac{1}{n+1},s).\text{ }%
\]
The set $N(s)$ has zero measure and it is independent of $z$.
We are going to show that this is the subset we are looking
for; namely
$\lim_{h\rightarrow0}(\varphi(z,s,t+h)-\varphi(z,s,t))/h$
exists  for all $t\in(s,+\infty)\setminus N(s)$ uniformly on
compacta of $\mathbb{D}$.

First of all we show that for every $t\in(s,+\infty)\setminus
N(s),$ the family
\[
\mathcal{F}_{s,t}:=\{\frac{1}{h}(\varphi_{s,t+h}-\varphi_{s,t}):0<|h|<t-s\}
\]
is relatively compact in $\mathrm{Hol}(\mathbb{D},\mathbb{C})$.
To this aim, we will work separately two cases: $(a)$
$0<h<t-s;$ $(b)$ $s-t<h<0.$

Case $(a)$:$\,$\ Since $h>0$ and by EF2,
\[
\mathcal{F}_{s,t}=\left\{
f_{h}\circ\varphi_{s,t}:0<h<t-s\right\}  ,
\]
where
$f_{h}:=\frac{1}{h}(\varphi_{t,t+h}-\id)\in\mathrm{Hol}(\mathbb{D}
,\mathbb{C})$. Since $\varphi_{s,t}$ is holomorphic in
$\mathbb{D},$ and by Montel's theorem,  we only need to check
that
\[
\mathcal{F}_{s,t}^{\ast}:=\left\{  f_{h}:0<h<t-s\right\}
\]
is a bounded subset of $\mathrm{Hol}(\mathbb{D},\mathbb{C})$.
 Assume this is not the case.
Then there exists a sequence $\{f_{n}\}$ (with $f_n
:=f_{h_{n}}$) in $\mathcal{F}_{s,t}^{\ast}$ and $r\in(0,1)$
such that
\begin{equation}\label{star}
\lim_{n\to \infty} \max\left\{ |f_{n}(z)|:|z|\leq r\right\}
=+\infty.
\end{equation}
Since the sequence $\{\varphi_{t,t+h_{n}}-\id\}_{n}$ belongs to
$H^{\infty}(\mathbb{D}),$ we may assume that $\lim_{n}h_{n}=0$.
Moreover, letting $z_{1}:=\varphi(1/2,s,t)$ and
$z_{2}:=\varphi(1/3,s,t)$, then $z_1\neq z_2$ because
$\varphi_{s,t}$ is univalent (see Corollary \ref{Univalencia}).
Hence, since $h_{n}>0$ and $t\not\in N(1/2,s)$
\begin{align*}
D_{s,t}(1/2)  & =\lim_{n}\frac{\varphi(1/2,s,t+h_{n})-\varphi(1/2,s,t)}{h_{n}%
}\\
& =\lim_{n}\frac{\varphi(\varphi(1/2,s,t),t,t+h_{n})-\varphi(1/2,s,t)}{h_{n}%
}\\
&
=\lim_{n}\frac{\varphi(z_{1},t,t+h_{n})-z_{1}}{h_{n}}=\lim_{n}f_{n}(z_{1}).
\end{align*}
Similarly, one can check the existence of the limit $\lim_{n}
f_{n}(z_{2})$. Now,  note that $\mathcal{F}_{s,t}^{\ast}\subset
\mathrm{Gen}(\D)$ since $h>0$. Therefore, we can apply Lemma
\ref{Lema de los dos puntos} to the sequence $\{f_{n}\}$ and
the two points $z_{0},z_{1}$, contradicting \eqref{star}.

Case $(b)$: the proof of this case is similar to that of Case
$(a)$ and we only sketch it. Since $h<0$ and by EF2, we see
that $\mathcal{F}_{s,t}:=\left\{
f_{h}\circ\varphi_{s,t+h}:s-t<h<0\right\}$, where
$f_{h}:=-\frac{1}{h}(\varphi_{t+h,t}-\id)\in\mathrm{Hol}(\mathbb{D}
,\mathbb{C})$. By Proposition \ref{EF-continuidad} and Montel's
theorem, we only have to check that
$\mathcal{F}_{s,t}^{\ast}:=\left\{  f_{h}:s-t<h<0\right\}$ is a
bounded subset of $\mathrm{Hol}(\mathbb{D},\mathbb{C})$. Again,
we argue by contradiction assuming the existence of a  sequence
$\{f_{n}\}\subset\mathcal{F}_{s,t}^{\ast}$ which is not bounded
on some compact subset of $\D$. This time we define
$z_{1,n}:=\varphi(1/2,s,t+h_{n})$, $
z_{2,n}:=\varphi(1/3,s,t+h_{n})$. Because  $\varphi_{s,t}$ is
univalent, we find that $\lim_{n}z_{1,n}\neq \lim_{n}z_{2,n}$
and
\begin{align*}
D_{s,t}(1/2)  & =\lim_{n}\frac{\varphi(1/2,s,t+h_{n})-\varphi(1/2,s,t)}{h_{n}%
}\\
& =\lim_{n}\frac{\varphi(1/2,s,t+h_{n})-\varphi(\varphi(1/2,s,t+h_{n}%
),t+h_{n},t)}{h_{n}}\\
& =\lim_{n}\frac{z_{1,n}-\varphi(z_{1,n},t+h_{n},t)}{h_{n}}=\lim_{n}%
f_{n}(z_{1,n}).
\end{align*}
In a similar way, it can be checked the existence of the limit
$\lim_{n} f_{n}(z_{2,n})$. Again, this forces a contradiction
to Lemma \ref{Lema de los dos puntos}.

Thus the family $\mathcal{F}_{s,t}$ is  relatively compact in
$\mathrm{Hol}(\mathbb{D},\mathbb{C})$. Let $\psi, \phi$ be two
of its limits. By the very definition of $N(s)$
\[
D_{s,t}(\frac{1}{m+1})=\psi(\frac{1}{m+1})=\phi(\frac{1}{m+1}),
\]
for every $m\in\mathbb{N}$. But $\{\frac{1}{m+1}\}$ is a
sequence accumulating at $0$, hence by the identity principle
$\psi=\phi$. This shows that
\[
\lim_{h\rightarrow0}\frac{\varphi(z,s,t+h)-\varphi(z,s,t)}{h},
\]
exists, for all $t\in(s,+\infty)\setminus N(s)$ uniformly on
compacta of $\mathbb{D}$, ending the proof of (1).

(2) Fix $s\geq0.$ By  part (1), there exists a set
$N_{0}(s)\subset\lbrack s,+\infty)$ of zero measure (not
depending on $z$) such that, for every
$t\in(s,+\infty)\setminus N_{0}(s)$, the function
$\mathbb{D}\ni z\mapsto\frac{\partial\varphi}{\partial
t}(z,s,t)$ is a well defined holomorphic function on $\D$. Let
$\{z_n\}$ be any sequence converging to $0$. Then for all $n\in
\N$ there exists a set of zero measure $N(z_n, s)$ such that
\begin{equation}\label{equaln}
\frac{\partial\varphi}{\partial
t}(z_{n},s,t)=G(\varphi(z_{n},s,t),t)
\end{equation}
for all $t\in [s,+\infty)\setminus N(z_n,s)$. Let
$N(s)=N_0(s)\cup \bigcup_{n} N(z_n,s)$. Then $N(s)$ has measure
zero and for all $t\in [s,+\infty)\setminus N(s)$ equation
 \eqref{equaln} holds. By the identity principle for
 holomorphic maps, the two holomorphic functions $\frac{\partial\varphi}{\partial
t}(\cdot,s,t)$ and $G(\varphi(\cdot,s,t),t)$ are then equal on
$\D$, proving (2).
\end{proof}

\begin{corollary}\label{uniqueHerglotz}
If $G, \tilde{G}$ are Herglotz vector fields with the same
positive trajectories then $G(z,t)=\tilde{G}(z,t)$ for almost
every $t\in [0,+\infty)$ and all $z\in \D$.
\end{corollary}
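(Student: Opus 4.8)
The plan is to deduce this directly from Theorem~\ref{Derivacion independiente de z (en la t)}(2) together with the univalence of the members of an evolution family (Corollary~\ref{Univalencia}). Since $G$ and $\tilde G$ have the same positive trajectories, they are associated with one and the same evolution family $(\varphi_{s,t})$; this is the family whose orbits are the common trajectories. In particular $(\varphi_{s,t})$ is an evolution family of order $d\geq 1$ for the relevant $d$, and by Theorem~\ref{EF-implica-VF} both $G$ and $\tilde G$ are Herglotz vector fields of the kind to which Theorem~\ref{Derivacion independiente de z (en la t)} applies.

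First I would fix $s=0$ and apply Theorem~\ref{Derivacion independiente de z (en la t)}(2) to $G$: there is a set $M_G\subset[0,+\infty)$ of zero measure such that
\[
\frac{\partial\varphi}{\partial t}(z,0,t)=G(\varphi_{0,t}(z),t)\qquad\text{for all }t\in(0,+\infty)\setminus M_G\text{ and all }z\in\D.
\]
Applying the same statement to $\tilde G$ yields a zero-measure set $M_{\tilde G}$ with the analogous identity. Hence for every $t\in(0,+\infty)\setminus(M_G\cup M_{\tilde G})$ and every $z\in\D$ we get
\[
G(\varphi_{0,t}(z),t)=\frac{\partial\varphi}{\partial t}(z,0,t)=\tilde G(\varphi_{0,t}(z),t).
\]

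Next I would invoke Corollary~\ref{Univalencia}: for each such $t$ the map $\varphi_{0,t}$ is univalent, hence a nonconstant holomorphic self-map of $\D$, so $\varphi_{0,t}(\D)$ is a nonempty open subset of $\D$. The displayed equality then says that the two holomorphic functions $z\mapsto G(z,t)$ and $z\mapsto \tilde G(z,t)$ coincide on the open set $\varphi_{0,t}(\D)$, and by the identity principle they coincide on all of $\D$. Since the exceptional set $M_G\cup M_{\tilde G}$ (together with $\{0\}$) has Lebesgue measure zero, we conclude $G(z,t)=\tilde G(z,t)$ for almost every $t\in[0,+\infty)$ and all $z\in\D$, which is the assertion.

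There is essentially no genuine obstacle here: once Theorem~\ref{Derivacion independiente de z (en la t)}(2) and Corollary~\ref{Univalencia} are in place, the argument is a one-line application of the identity principle. The only point requiring a word of care is bookkeeping of the null sets — one must take $s=0$ (or any fixed $s$) so that the exceptional set does not depend on $z$, which is exactly what Theorem~\ref{Derivacion independiente de z (en la t)} guarantees; choosing $s=0$ already covers almost every $t\in[0,+\infty)$, so no further gluing over $s$ is needed.
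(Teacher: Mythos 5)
Your proof is correct and follows essentially the same route as the paper: the paper likewise notes that the common trajectories form an evolution family whose elements are univalent (Corollary \ref{Univalencia}) and then concludes from the identity \eqref{diffGev}, which is exactly your combination of Theorem \ref{Derivacion independiente de z (en la t)}(2) with the identity principle on the open set $\varphi_{0,t}(\D)$. The only cosmetic slip is your citation of Theorem \ref{EF-implica-VF}; the relevant fact (that the trajectories form an evolution family) comes from Theorem \ref{Herglotz-implica-EF}, but this does not affect the argument.
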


\begin{proof}
By Theorem \ref{Herglotz-implica-EF} the positive trajectories
of $G$ and $\tilde{G}$ are evolution families of the unit disc.
In particular they are univalent by Corollary
\ref{Univalencia}. The claim follows then from~\eqref{diffGev}.
\end{proof}

The next result studies the dependence of evolution families
with respect to the ``$s$ variable''.

\begin{theorem}
\label{Derivacion indep. de z (en la s)} Let $(\varphi_{s,t})$
be an evolution family of order $d\geq 1$ in the unit disc.

\begin{enumerate}
\item For every $t>0,$ there exists a set $N(t)\subset\lbrack0,t]$ (not depending on $z$) of zero measure
such that, for every $s\in(0,t)\setminus N(t)$, the function
\[
\mathbb{D}\ni z\mapsto\frac{\partial\varphi}{\partial
s}(z,s,t):=\lim
_{h\rightarrow0}\frac{\varphi_{s+h,t}(z)-\varphi_{s,t}(z)}{h}\in\mathbb{C}%
\]
is a well-defined holomorphic function on $\mathbb{D}.$

\item Let $G:\mathbb{D}\times\lbrack0,+\infty)$ $\mathbb{\rightarrow C}$ be a
Herglotz vector field whose positive trajectories are
$(\varphi_{s,t})$. Fix $t>0.$ Then, there exists a set
$N(t)\subset\lbrack0,t]$ (not depending on $z$) of zero measure
such that, for every $s\in (0,t)\setminus N(t)$ and every
$z\in\mathbb{D}$
\begin{equation}\label{sGev}
\frac{\partial\varphi}{\partial s}(z,s,t)=-G(z,s)\varphi_{s,t}^{\prime}(z).
\end{equation}
\end{enumerate}
\end{theorem}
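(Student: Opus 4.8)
The plan is to mimic the proof of Theorem~\ref{Derivacion independiente de z (en la t)}, using now the cocycle law EF2 to express an increment in the $s$--variable as an increment of $\varphi_{s,s+h}$ (for $h>0$) or of $\varphi_{s+h,s}$ (for $h<0$) at its right endpoint. Fix $t>0$. For each $z$, Proposition~\ref{EF-continuidadabsoltuta}(2) gives that $s\mapsto\varphi_{s,t}(z)$ is absolutely continuous on $[0,t]$, hence differentiable outside a null set $N(z,t)$; taking $T:=t$ and letting $k_{z,t}\in L^{d}([0,t],\R)$ be the function given by EF3 (extended by $0$ outside $[0,t]$), I set
\[
N(t):=\bigcup_{n\in\N}\Big(N(\tfrac1{n+1},t)\cup\{s:{\sf Max}_{k_{1/(n+1),t}}(s)=+\infty\}\Big),
\]
a null subset of $[0,t]$, where ${\sf Max}$ denotes the Hardy--Littlewood maximal function.

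Next I would prove that for $s\in(0,t)\setminus N(t)$ the family $\mathcal F_{s,t}:=\{\tfrac1h(\varphi_{s+h,t}-\varphi_{s,t}):0<|h|<\min\{s,t-s\}\}$ is relatively compact in $\mathrm{Hol}(\D,\C)$. For $0<h<t-s$, from $\varphi_{s,t}=\varphi_{s+h,t}\circ\varphi_{s,s+h}$ and the fundamental theorem of calculus along the segment from $\varphi_{s,s+h}(z)$ to $z$,
\[
\frac{\varphi_{s+h,t}(z)-\varphi_{s,t}(z)}{h}=-F_h(z)\int_0^1\varphi_{s+h,t}'\big(\varphi_{s,s+h}(z)+\sigma(z-\varphi_{s,s+h}(z))\big)\,d\sigma,
\]
where $F_h:=\tfrac1h(\varphi_{s,s+h}-\id)$; symmetrically, for $-s<h<0$ one uses $\varphi_{s+h,t}=\varphi_{s,t}\circ\varphi_{s+h,s}$ and $\tilde F_h:=\tfrac1h(\varphi_{s+h,s}-\id)$. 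Since $\varphi-\id\in\mathrm{Gen}(\D)$ for every holomorphic self-map $\varphi$ and $\mathrm{Gen}(\D)$ is a cone, one has $F_h\in\mathrm{Gen}(\D)$ for $h>0$ and $-\tilde F_h\in\mathrm{Gen}(\D)$ for $h<0$; moreover, writing $F_h(w)=\tfrac1h(\varphi_{s,s+h}(w)-\varphi_{s,s}(w))$, EF3 and the maximal theorem yield $\sup_h|F_h(\tfrac12)|\le{\sf Max}_{k_{1/2,t}}(s)<+\infty$ and $\sup_h|F_h(\tfrac13)|<+\infty$ (and likewise for $\tilde F_h$), so Lemma~\ref{Lema de los dos puntos} makes every sequence $\{F_{h_n}\}$ with $h_n\to0$ admit a convergent subsequence; the case $h_n\not\to0$ being handled by the joint continuity of Proposition~\ref{EF-continuidad}, the families $\{F_h\}$ and $\{\tilde F_h\}$ are relatively compact. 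Feeding this, together with $\varphi_{s+h,t}\to\varphi_{s,t}$ and $\varphi_{s,s+h}\to\id$, into the displayed identity shows $\mathcal F_{s,t}$ is relatively compact.

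For $s\in(0,t)\setminus N(t)$, since $s\notin N(\tfrac1{n+1},t)$ the one-sided limits as $h\to0^{\pm}$ of $\tfrac1h(\varphi_{s+h,t}(\tfrac1{n+1})-\varphi_{s,t}(\tfrac1{n+1}))$ exist and equal $\tfrac{\partial\varphi}{\partial s}(\tfrac1{n+1},s,t)$ for all $n$; as $\{\tfrac1{n+1}\}$ accumulates at $0$, relative compactness and the identity principle force all subsequential limits of $\mathcal F_{s,t}$ as $h\to0$ to coincide, so $\tfrac1h(\varphi_{s+h,t}-\varphi_{s,t})$ converges in $\mathrm{Hol}(\D,\C)$ to some $\Phi_{s,t}$, proving (1). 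Reading off the displayed identity, $\Phi_{s,t}=-g_s\,\varphi_{s,t}'$ with $g_s:=\lim_{h\to0^+}F_h\in\mathrm{Hol}(\D,\C)$ — the limit exists because any subsequential limit $g$ satisfies $-g\varphi_{s,t}'=\Phi_{s,t}$ and $\varphi_{s,t}'$ is nonvanishing by Corollary~\ref{Univalencia}. For (2), let $G$ be a Herglotz vector field with positive trajectories $(\varphi_{s,t})$. By Proposition~\ref{EF-continuidadabsoltuta}(1), $\varphi_{s,s+h}(z)=z+\int_s^{s+h}G(\varphi_{s,\xi}(z),\xi)\,d\xi$; writing $G(\varphi_{s,\xi}(z),\xi)=G(z,\xi)+\big(G(\varphi_{s,\xi}(z),\xi)-G(z,\xi)\big)$, bounding the bracket by Lemma~\ref{WHVF4} and $|\varphi_{s,\xi}(z)-z|\le\int_s^\xi k_{z,t}$, and using that $s$ is a Lebesgue point of $\xi\mapsto G(z,\xi)$ and of the relevant Cauchy majorant, one gets $g_s(z)=\lim_{h\to0^+}\tfrac1h\int_s^{s+h}G(\varphi_{s,\xi}(z),\xi)\,d\xi=G(z,s)$ for each fixed $z$ outside a null set of $s$; enlarging $N(t)$ by the union of these null sets for $z=\tfrac1{n+1}$, $n\in\N$, and invoking the identity principle again gives $g_s=G(\cdot,s)$ on $\D$, hence $\tfrac{\partial\varphi}{\partial s}(z,s,t)=-G(z,s)\varphi_{s,t}'(z)$, which is \eqref{sGev}.

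The main obstacle I anticipate is the bookkeeping of null sets: one must check that all exceptional sets entering the maximal-function estimate and the Lebesgue-point argument (for the EF3 majorants $k_{z,t}$, the Cauchy-estimate majorants of Lemma~\ref{WHVF4}, and $\xi\mapsto G(z,\xi)$) depend only on fixed $\loc$ functions and not on the running base-time $s$, so that a single countable union over the sequence $z=\tfrac1{n+1}$ produces one null set $N(t)$ valid for all $z\in\D$ simultaneously; the rest is a routine adaptation of the argument for Theorem~\ref{Derivacion independiente de z (en la t)}.
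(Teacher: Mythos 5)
Your proof is correct, but it takes a genuinely different route from the paper's, most notably in part (2). For part (1) the paper does not use EF3 at the two points $1/2,1/3$ together with the maximal function and Lemma \ref{Lema de los dos puntos} (that is the scheme of the paper's proof of Theorem \ref{Derivacion independiente de z (en la t)}); instead it invokes Theorem \ref{EF-implica-VF} to get a Herglotz vector field $G$ already in part (1), writes $\varphi_{s+h,t}(z)-\varphi_{s,t}(z)=\varphi(z,s+h,t)-\varphi(\varphi(z,s,s+h),s+h,t)$, bounds this by a Cauchy-integral estimate on circles supplied by Lemma \ref{EF-acotacion}, and controls $|z-\varphi_{s,s+h}(z)|\le\int_s^{s+h}k_{R_n,t}(\xi)d\xi$ via the WHVF3 majorants, restricting $s$ to their differentiation points; your FTC-along-the-segment identity plus the two-point lemma achieves the same local boundedness without using $G$ at all in part (1), which is a mild gain in self-containedness, and the limit identification through the sequence $1/(n+1)$ and the identity principle is the same. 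For part (2) the paper is much shorter: it differentiates the identity $\varphi(z,0,t)=\varphi(\varphi(z,0,u),u,t)$ in $u$, uses \eqref{diffGev} for the inner flow, obtains $\varphi'(w,u,t)G(w,u)=-\frac{\partial\varphi}{\partial u}(w,u,t)$ on the open set $\varphi_{0,u}(\mathbb{D})$, and concludes by univalence and the identity principle; you instead identify the limit of the difference quotients as $-g_s\,\varphi_{s,t}'$ with $g_s=\lim_{h\to0^+}\frac1h(\varphi_{s,s+h}-\id)$ and prove $g_s=G(\cdot,s)$ for a.e.\ $s$ by Lebesgue differentiation applied to the trajectory equation, with Lemma \ref{WHVF4} controlling $G(\varphi_{s,\xi}(z),\xi)-G(z,\xi)$ and another identity-principle step over $z=1/(n+1)$. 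Your route is longer but avoids the chain-rule step for a composition in which both arguments depend on $u$ (which in the paper is justified by the locally uniform convergence from part (1)), and it re-derives in passing the a.e.\ identity $\lim_{h\to0^+}\frac1h(\varphi_{s,s+h}-\id)=G(\cdot,s)$; the paper's route buys brevity by leaning directly on Theorem \ref{Derivacion independiente de z (en la t)}(2). Your null-set bookkeeping is sound since all majorants and Lebesgue-point conditions refer to fixed $L^1$ functions, independent of the running $s$; the only details worth spelling out are that the segment from $\varphi_{s,s+h}(z)$ to $z$ stays in a fixed compact subset of $\mathbb{D}$ (Lemma \ref{EF-acotacion} plus Schwarz--Pick then bounds $\varphi_{s+h,t}'$ there uniformly in $h$) and that $\varphi_{s+h,t}'\to\varphi_{s,t}'$ locally uniformly by Proposition \ref{EF-continuidad} and the Weierstrass theorem, both routine.
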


\begin{proof}
(1) Fix $t>0.$ By Proposition \ref{EF-continuidadabsoltuta}
 the map $\lbrack0,t]\ni s
\mapsto \varphi(z,s,t)\in\mathbb{C}$ is absolutely continuous
in in $[0,t]$, for all fixed $z\in\mathbb{D}$. Thus there
exists a  set of zero measure $N_{1}(z,t)\subset\lbrack0,t]$
such that, for every $s\in (0,t)\setminus N_{1}(z,t),$ the
following limit exists
\[
D_{s,t}(z)=\frac{\partial\varphi}{\partial s}(z,s,t)=\lim_{h\rightarrow0}%
\frac{\varphi(z,s+h,t)-\varphi(z,s,t)}{h}.
\]
Moreover, let $\{r_{n}\}\subset(0,1)$ be a sequence converging
to $1$. For $R>0$, let $\D(R)=\{\zeta\in \C: |\zeta|<R\}$. By
Lemma \ref{EF-acotacion}, for all $n\in \N$ there exists
$R_{n}:=R(r_{n},t)\in(0,1)$ such that
\[
A_{n}:=\{\varphi(z,u,v):|z|\leq r_{n},\text{ }0\leq u\leq v\leq
t+1\}\subset \D(R_{n}).
\]
 Let $G:\mathbb{D}\times\lbrack0,+\infty)$ $\mathbb{\rightarrow C}$ be a
Herglotz vector field whose positive trajectories are
$(\varphi_{s,t})$ (such a vector field exists by Theorem
\ref{EF-implica-VF}). Let $k_{n}:=k_{R_{n},t}\in
L^{d}([0,t+1],\mathbb{R})$ be the non negative function given
by property WHVF3 in Definition \ref{Definicion-VF}. There
exists a  set $N_{2}(n,t)\subset\lbrack0,t]$ of zero measure
such that, for every $s\in(0,t)\setminus N_{2}(z,t)$
\[
k_{n}(s)=\lim_{h\rightarrow0}\frac{1}{h}\int_{s}^{s+h}k_{n}(\eta)d\eta.
\]
Let us define
\[
N(t):=\left(
{\textstyle\bigcup_{n=1}^{\infty}}
N_{1}(\frac{1}{n+1},t)\right)  \cup\left(
{\textstyle\bigcup_{n=1}^{\infty}}
N_{2}(n,t)\right)  .
\]
Obviously, $N(t)$ is a subset of $[0,t]$ of zero measure,
independent of $z$. We are going to prove that for all
$s\in(0,t)\setminus N(t)$  the following limit
\[
\lim_{h\rightarrow0}\frac{\varphi(z,s+h,t)-\varphi(z,s,t)}{h}
\]
exists  uniformly on compacta of $\mathbb{D}.$

First of all we show that for every $s\in(0,t)\setminus N(t)$
the family
\[
\mathcal{F}_{s,t}:=\{F_{h}:=\frac{1}{h}(\varphi_{s+h,t}-\varphi_{s,t}%
):0<h<t-s\text{ or }-s<h<0\}
\]
is a relatively compact in
$\mathrm{Hol}(\mathbb{D},\mathbb{C})$. To this aim, we consider
two  cases: $(a)$ $0<h<t-s;$ $(b)$ $-s<h<0.$

Case $(a)$: \ Fix $r\in(0,1).$ Let $n\in \N$ be such that
$r_{n}>r$, and let $\rho_{n}\in(0,1)$ be such that
$\rho_{n}>R_{n}$. Set $z_{h}:=\varphi(z,s,s+h)$. Then, for
every $|z|\leq r,$ the point $z_h\in A_{n}$ and
\begin{align*}
|F_{h}(z)|  & =\left\vert \frac{1}{h}\left(  \varphi(z,s+h,t)-\varphi
(\varphi(z,s,s+h),s+h,t)\right)  \right\vert \\
& =\frac{1}{h}\left\vert \frac{1}{2\pi i}\int_{C^{+}(0,\rho_{n})}\varphi
(\xi,s+h,t)\left(  \frac{1}{\xi-z}-\frac{1}{\xi-z_{h}}\right)  d\xi\right\vert
\\
& \leq\frac{1}{h}\rho_{n}\frac{|z-z_{h}|}{(\rho_{n}-r_{n})(\rho_{n}-R_{n})}.
\end{align*}
Setting $C:=C(r,t)=\dfrac{\rho_{n}}{(\rho_{n}-r_{n})(\rho_{n}-R_{n}%
)}>0$ and recalling the definition of $A_{n},$ we have that
there exists $\tilde{C}>0$ such that
\begin{align*}
|F_{h}(z)|  & \leq C\frac{1}{h}\left\vert \varphi(z,s,s+h)-z\right\vert
=C\frac{1}{h}\left\vert \int_{s}^{s+h}G(\varphi(z,s,\xi),\xi)d\xi\right\vert
\\
& \leq C\frac{1}{h}\int_{s}^{s+h}k_{n}(\xi)d\xi\leq
\tilde{C}<+\infty,
\end{align*}
where the last inequality follows from $s\notin N_{2}(n,t)$.
Hence, $\sup\{|F_{h}(z)|:|z|\leq r,$ $0<h<t-s\}<+\infty$ as
wanted.

Case $(b)$:  the proof is similar to that of  case $(a)$ and we
omit it .

Now, arguing as in the last part of the proof of part (1) of
Theorem \ref{Derivacion independiente de z (en la t)} we can
see that
$\lim_{h\rightarrow0}(\varphi(z,s+h,t)-\varphi(z,s,t))/h$
exists for all $s\in(0,t)\setminus N(t)$  uniformly on compacta
of $\mathbb{D}$, concluding the proof of (1).

(2) Fix $t>0.$ Let $N_{1}\subset\lbrack0,+\infty)$ be the set
of zero measure given by Theorem \ref{Derivacion independiente
de z (en la t)}.(1) such that $\frac{\partial\varphi}{\partial
t}(z,0,u)=G(\varphi(z,0,u),u)$ for all
$u\in(0,+\infty)\setminus N_{1}$ and for all $z\in \D$. Let
$N_{2}:=N_{2}(t)\subset\lbrack0,t]$ be the set of zero measure
prescribed by part (1) of this theorem.

Let $N:=N_{1}\cup N_{2}$. Differentiating with respect to $u$
the identity $\varphi(z,0,t)=\varphi(\varphi(z,0,u),u,t)$, for
 $z\in\mathbb{D}$ and $u\in(0,t)\setminus N$ we obtain
\begin{align*}
0  & =\varphi^{\prime}(\varphi(z,0,u),u,t)\frac{\partial\varphi}{\partial
u}(z,0,u)+\frac{\de \v}{\de u}(\varphi(z,0,u),u,t)\\
&
=\varphi^{\prime}(\varphi(z,0,u),u,t)G(\varphi(z,0,u),u)+\frac{\de
\v}{\de u}(\varphi(z,0,u),u,t).
\end{align*}
Therefore $\varphi^{\prime}(w,u,t)G(w,u)=-\frac{\de \v}{\de
u}(w,u,t)$ for all $w=\varphi(z,0,u)$. Since the
$\varphi_{0,u}$'s are univalent, the identity principle for
holomorphic maps implies the result.
\end{proof}

 Now we are going to show that the $\tau_s$ appearing in the
Berkson-Porta type decomposition formula for Herglotz vector
fields are related to  Denjoy-Wolff points of the elements of
the associated evolution families as in the classical
Berkson-Porta formula for semigroups:

\begin{theorem}
Let $(\varphi_{s,t})$ be an evolution family of order $d\geq 1$
in the unit disc, let $G(z,t)$ be the Herglotz vector field of
order $d\geq 1$ which solves \eqref{main-eq} and let
\[
G(z,s)=(z-\tau_{s})(\overline{\tau_{s}}z-1)p(z,s),\text{ }
z\in\mathbb{D},\text{ }s\geq0,
\]
be its Berkson-Porta type decomposition
\eqref{Herglotz-vf-main}. Let $Z:=\{s\in [0,+\infty): G(\cdot,
s)\not\equiv 0\}$. Then for almost every $s\in Z$ there exists
a decreasing sequence $\{t_{n}(s)\}$ converging to $s$ such
that $\varphi_{s,t_{n}(s)}\not\equiv \id_\D$ and, denoting by
$\tau(s,n)$ the Denjoy-Wolff point of $\varphi_{s,t_{n}(s)}$,
it holds
\[
\tau_{s}=\lim_{n\to\infty}\tau(s,n).
\]
\end{theorem}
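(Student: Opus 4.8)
The plan is to read off the sequence $\{t_n(s)\}$ directly from the construction of $G$ in the proof of Theorem~\ref{EF-implica-VF} and then to match Berkson--Porta points with Denjoy--Wolff points by means of Proposition~\ref{BP-continuidad}. By Corollary~\ref{uniqueHerglotz} we may assume that $G$ is exactly the Herglotz vector field built in the proof of Theorem~\ref{EF-implica-VF}, since the assertion concerns only almost every $s$.

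Recall from \eqref{Glimit} that there is a null set $M\subset[0,+\infty)$ such that for every $s\notin M$ there is a strictly increasing sequence $\{n_k(s)\}\subset\N$ with $g_k^{(s)}:=n_k(s)\bigl(\varphi_{s,s+1/n_k(s)}-\id\bigr)\to G(\cdot,s)$ in $\mathrm{Hol}(\D,\C)$. Fix $s\in Z\setminus M$, so $G(\cdot,s)\in\mathrm{Gen}(\D)\setminus\{\underline{0}\}$. Each $\varphi_{s,s+1/n_k(s)}$ is a holomorphic self-map of $\D$, hence $\varphi_{s,s+1/n_k(s)}-\id\in\mathrm{Gen}(\D)$, and since $\mathrm{Gen}(\D)$ is a cone with vertex at $0$ also $g_k^{(s)}\in\mathrm{Gen}(\D)$. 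As $g_k^{(s)}\to G(\cdot,s)\not\equiv\underline{0}$ in the Hausdorff space $\mathrm{Hol}(\D,\C)$, we have $g_k^{(s)}\not\equiv\underline{0}$ for all large $k$; discarding the finitely many $k$ for which $g_k^{(s)}\equiv\underline{0}$ and relabelling, set $t_n(s):=s+1/n_{k_n}(s)$. Then $\varphi_{s,t_n(s)}\not\equiv\id_\D$ for all $n$, and $\{t_n(s)\}$ is strictly decreasing to $s$ because $\{n_k(s)\}$ is strictly increasing.

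Next, scaling a nonzero element of $\mathrm{Gen}(\D)$ by a positive constant does not change its Berkson--Porta point (immediate from the uniqueness of the representation), so $BP_\tau(g_{k_n}^{(s)})=BP_\tau(\varphi_{s,t_n(s)}-\id)$; and by the continuity of $BP_\tau$ on $\mathrm{Gen}(\D)\setminus\{\underline{0}\}$ (Proposition~\ref{BP-continuidad}) we get $BP_\tau(g_{k_n}^{(s)})\to BP_\tau(G(\cdot,s))=\tau_s$. Hence the whole statement reduces to the following fact: for every non-identity holomorphic self-map $\varphi$ of $\D$, the point $BP_\tau(\varphi-\id)$ coincides with the Denjoy--Wolff point of $\varphi$. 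Granting this, $\tau(s,n)=BP_\tau(\varphi_{s,t_n(s)}-\id)\to\tau_s$, which is what we want.

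To prove that fact, write $\varphi-\id=(z-\tau)(\overline{\tau}z-1)q(z)$ with $\Re q\geq 0$ and $q\not\equiv\underline{0}$; then $q$ is zero-free in $\D$, for otherwise the minimum principle applied to $\Re q$ would force $q$ to be a nonzero imaginary constant. If $\varphi$ has an interior fixed point $p$, then $(\varphi-\id)(p)=0$, and since $q(p)\neq 0$ and $\overline{\tau}p\neq 1$ we obtain $\tau=p$, the Denjoy--Wolff point. If $\varphi$ has no interior fixed point, then $\tau\notin\D$ (otherwise $(\varphi-\id)(\tau)=0$ would give the interior fixed point $\varphi(\tau)=\tau$), so $|\tau|=1$; using $(z-\tau)(\overline{\tau}z-1)=\overline{\tau}(z-\tau)^2$ one finds $\varphi(z)-\tau=(z-\tau)\bigl(1+\overline{\tau}(z-\tau)q(z)\bigr)$, and the Herglotz representation of $q$ gives $\angle\lim_{z\to\tau}(z-\tau)q(z)=-2\tau\beta$ for some $\beta\geq 0$. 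Therefore $\angle\lim_{z\to\tau}\varphi(z)=\tau$, so $\tau$ is a boundary fixed point of $\varphi$, and $\varphi'(\tau)=1-2\beta\leq 1$; by Wolff's lemma (recalled in Section~\ref{preli}) a boundary fixed point with angular derivative $\leq 1$ must be the Denjoy--Wolff point, so $\tau$ is it. The main obstacle is precisely this last computation --- showing that the Berkson--Porta point of $\varphi-\id$ is a boundary fixed point of $\varphi$ with sub-unit angular derivative; the remaining measurability and limiting arguments are routine given Proposition~\ref{BP-continuidad} and \eqref{Glimit}.
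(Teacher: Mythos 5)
Your proposal is correct, and its skeleton is the same as the paper's: you take the same sequence $t_n(s)=s+1/n_{k}(s)$ coming from \eqref{Glimit}, discard the finitely many indices where the quotient vanishes identically, and reduce everything via Proposition~\ref{BP-continuidad} to the single claim that for a non-identity self-map $\varphi$ of $\D$ the Berkson--Porta point of $\varphi-\id$ is the Denjoy--Wolff point of $\varphi$ (scaling by $n_k(s)>0$ being harmless). The only genuine difference is how that claim is proved in the boundary case. The paper argues from the Denjoy--Wolff point towards the Berkson--Porta point: it observes that the Denjoy--Wolff point is a boundary critical point of the generator $n_k(\varphi-\id)$, computes the angular derivative there as $n_k(\varphi'(\tau)-1)$, and invokes the cited result of Contreras--D\'{\i}az-Madrigal--Pommerenke on boundary critical points to identify it with the Berkson--Porta point. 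You go in the opposite direction and keep it self-contained: starting from the representation $\varphi-\id=\overline{\tau}(z-\tau)^2 q$, the Herglotz representation of $q$ gives $\angle\lim(z-\tau)q(z)=-2\tau\beta$ with $\beta\geq0$, hence $\tau$ is a boundary fixed point of $\varphi$ with angular derivative $1-2\beta\leq1$, and the uniqueness statement recalled in Section~\ref{preli} (boundary fixed points other than the Denjoy--Wolff point have derivative $>1$) finishes it; the interior case is handled by the zero-freeness of $q$, and your minimum-principle argument for that is fine even if the phrase ``nonzero imaginary constant'' is slightly garbled (a zero of $q$ forces $\Re q\equiv0$, hence $q\equiv ic$, hence $q\equiv0$, contradiction). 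What each route buys: the paper's is shorter modulo the external reference on critical points of generators; yours avoids that reference entirely at the cost of a short Julia--Wolff/Herglotz computation, which also makes explicit why $\varphi'(\tau)\leq1$ at the Berkson--Porta point.
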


\begin{proof} By \eqref{Glimit} there exists a set of zero measure $M\subset
\lbrack0,+\infty),$ such that for every
$s\in(0,+\infty)\setminus M$ there exists a strictly increasing
sequence of natural numbers $\{n_{k}(s)\}$ such that, defining
\[
f_k(z,s):=n_{k}(s)(\varphi(z,s,s+1/n_{k}(s))-z),
\]
it follows that  $G(z,s)$ is the uniform limit on compacta of
$\D$ of the sequence $\{f_k(z,s)\}$. Note that by the classical
Berkson-Porta formula,  $G(\cdot,s)\in \hbox{Gen}(\D)$ for
$s\geq 0$ fixed and also  $f_k(\cdot,s)\in \hbox{Gen}(\D)$ for
$s\geq 0$ fixed and all $k\geq 0$ (see Section 2).

Fix $s\in Z\setminus M$. Therefore there exists $m(s)\in \N$
such that  $\varphi(\cdot ,s,s+1/n_{k}(s))\not\equiv\id_\D$ and
$f_{k}(\cdot,s)\not\equiv 0$ for $k\geq m(s)$.

We claim that $\{s+1/n_{k}(s)\}_{k\geq m(s)}$ is the sequence
(which we relabel $\{t_n(s)\}$) we are looking for. Let
$\tau(s,k)$ be the  Denjoy-Wolff point of
$\varphi_{s,s+1/n_{k}(s)}$.

We claim that $BP_{\tau}(f_{k}(\cdot,s))=\tau(s,k),$ for all
$k$. Once this is proved then the result follows at once from
Proposition \ref{BP-continuidad}.

In case $\tau(s,k)\in\mathbb{D}$ then clearly $f_k(\tau(s,k),
s)=0$ and hence $BP_{\tau}(f_{k}(\cdot,s))=\tau(s,k)$, as
wanted.

In case $\tau (s,k)\in\partial\mathbb{D}$, then
$\angle\lim_{z\rightarrow\tau(s,k)}f_{k}(z,s)=0$, so
$\tau(s,k)$ is a boundary critical point for the generator
$f_{k}(\cdot,s)$ (see \cite{Contreras-Diaz-Pommerenke:Scand}
for further details about critical points). Bearing in mind
that $\tau(s,k)\in\partial\mathbb{D}$ is the Denjoy-Wolff point
of $\varphi_{s,s+1/n_{k}(s)},$ we have that
\[
\angle\lim_{z\rightarrow\tau(s,k)}\varphi^{\prime}(z,s,s+1/n_{k}(s))\in(0,1].
\]
Hence
\[
\angle\lim_{z\rightarrow\tau(s,k)}f_{k}^{\prime}(z,s)=n_{k}(s)\left(
\angle\lim_{z\rightarrow\tau(s,k)}\varphi^{\prime}(z,s,s+1/n_{k}(s))-1\right)
\in\lbrack0,+\infty).
\]
According to \cite{Contreras-Diaz-Pommerenke:Scand}, this
implies that $BP_{\tau}(f_{k}(\cdot,s))=\tau(s,k)$, as needed.
\end{proof}

\section{Evolution families with a common fixed point}

\begin{theorem}
\label{Continuidad-absoluta-multiplicador} Let
$(\varphi_{s,t})$ be an evolution family of order $d\geq 1$ of
the unit disc with Berkson-Porta data $(p,\tau)$. Suppose that
$\tau(t)\equiv \tau\in \oD$ is constant. Then there exists a
unique locally absolutely continuous function
$\lambda:[0,+\infty )\rightarrow\mathbb{C}$ with $\lambda'\in
L_{loc}^{d}([0,+\infty),\mathbb{C})$, $\lambda(0)=0$ and $\Re
\lambda(t)\geq\Re \lambda(s)\geq0$ for all $0\leq s\leq
t<+\infty$  such that for all $s\leq t$
\[
\varphi_{s,t}^{\prime}(\tau)=\exp(\lambda (s)-\lambda(t)).
\]
Moreover, if $\tau\in\mathbb{D}$, then
\[
\lambda(t)=(1-|\tau|^{2})\int_{0}^{t}p(\tau,\xi)d\xi\quad\text{for
all }t\geq0,
\]
while, if $\tau\in\partial\mathbb{D}$, then
\[
\lambda(t)=\int_{0}^{t}\left(  \angle\lim_{z\rightarrow\tau}\frac
{2|\tau-z|^{2}p(\frac{\tau+z}{\tau-z},\xi)}{1-|z|^{2}}\right)  d\xi
\quad\text{for all }t\geq0.
\]
\end{theorem}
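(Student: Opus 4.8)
The plan is to reduce, in each of the two cases $\tau\in\mathbb D$ and $\tau\in\partial\mathbb D$, the statement to a scalar linear ODE for the multiplier $t\mapsto\varphi_{s,t}'(\tau)$ and to read the conclusions off its solution. Assume first $\tau\in\mathbb D$. Since $\tau(t)\equiv\tau$, the Berkson-Porta form \eqref{Herglotz-vf-main} gives $G(\tau,t)=0$ for a.e.\ $t$, so the constant map $\tau$ solves the Cauchy problem for $G$ at $\tau$ and, by uniqueness, $\varphi_{s,t}(\tau)=\tau$ for all $s\le t$. Differentiating \eqref{diffGev} in $z$, interchanging $\partial_z$ and $\partial_t$ (licit because $z\mapsto\partial_t\varphi_{s,t}(z)$ is holomorphic for a.e.\ $t$ by Theorem~\ref{Derivacion independiente de z (en la t)} and one may differentiate Cauchy's integral under the sign), and evaluating at $z=\tau$ with $G'(\tau,t)=(|\tau|^{2}-1)p(\tau,t)$, one obtains for a.e.\ $t$
\[
\frac{\partial}{\partial t}\varphi_{s,t}'(\tau)=(|\tau|^{2}-1)\,p(\tau,t)\,\varphi_{s,t}'(\tau),\qquad \varphi_{s,s}'(\tau)=1 .
\]
As $t\mapsto\varphi_{s,t}'(\tau)$ is locally absolutely continuous (Cauchy's formula and EF3), this integrates to $\varphi_{s,t}'(\tau)=\exp(\lambda(s)-\lambda(t))$ with $\lambda(t):=(1-|\tau|^{2})\int_{0}^{t}p(\tau,\xi)\,d\xi$; such $\lambda$ is locally absolutely continuous with $\lambda'\in L^{d}_{\mathrm{loc}}$ (HF1), has $\lambda(0)=0$, and $\Re\lambda$ non-decreasing and nonnegative because $\Re p(\tau,\cdot)\ge0$ (HF3) and $1-|\tau|^{2}>0$ --- this is the asserted interior formula.

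Now assume $\tau\in\partial\mathbb D$, so $(z-\tau)(\overline{\tau}z-1)=\overline{\tau}(z-\tau)^{2}$. Conjugating by $T_{\tau}(z)=\frac{\tau+z}{\tau-z}$ turns $(\varphi_{s,t})$ into an evolution family $g_{s,t}:=T_{\tau}\circ\varphi_{s,t}\circ T_{\tau}^{-1}$ of $\mathbb H$ whose generating vector field is $\widetilde{G}(w,t)=2p(T_{\tau}^{-1}(w),t)$, so $\Re\widetilde{G}\ge0$. Hence, for fixed $s$ and $w\in\mathbb H$, the locally absolutely continuous map $t\mapsto\Re g_{s,t}(w)$ has a.e.\ derivative $\Re\widetilde{G}(g_{s,t}(w),t)\ge0$; thus $\Re g_{s,t}(w)\ge\Re w$ for $t\ge s$, i.e.\ each $\varphi_{s,t}$ sends every horocycle at $\tau$ into itself. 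A short argument using this (no interior fixed point, by the open mapping theorem applied to $w\mapsto g_{s,t}(w)-w$; the real parts of the iterates stay $\ge\Re w>0$, excluding a finite boundary fixed point) shows that $\infty$ is the Denjoy-Wolff point of $g_{s,t}$, so by the Julia-Wolff-Carath\'eodory theorem $\alpha_{s,t}:=\angle\lim_{w\to\infty}g_{s,t}(w)/w$ is finite and $\ge1$; by the conjugacy recalled in Section~\ref{preli} one has $\varphi_{s,t}'(\tau)=1/\alpha_{s,t}\in(0,1]$, so in particular $\tau$ is a boundary fixed point of every $\varphi_{s,t}$.

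From EF2 and the chain rule for angular derivatives at a common boundary fixed point, $\alpha_{s,t}=\alpha_{u,t}\,\alpha_{s,u}$, so $\lambda(t):=\log\alpha_{0,t}$ is nonnegative, non-decreasing, $\lambda(0)=0$, and $\varphi_{s,t}'(\tau)=1/\alpha_{s,t}=\exp(\lambda(s)-\lambda(t))$. To show $\lambda$ is locally absolutely continuous with $\lambda'\in L^{d}_{\mathrm{loc}}$ and to compute $\lambda'$, I would differentiate along the positive real axis: for $x>0$ the map $t\mapsto g_{s,t}(x)$ is locally absolutely continuous with $\partial_{t}g_{s,t}(x)=\widetilde{G}(g_{s,t}(x),t)$, whence $g_{s,t}(x)/x=1+\frac1x\int_{s}^{t}\widetilde{G}(g_{s,\xi}(x),\xi)\,d\xi$; letting $x\to+\infty$ and passing the limit under the integral by dominated convergence (using $\Re g_{s,\xi}(x)\ge x$, the a priori bound $\alpha_{s,\xi}\le\alpha_{0,T}$ on $[s,T]$, and the distortion estimate $|p(z,\xi)|\le\frac{1+|z|}{1-|z|}|p(0,\xi)|$ to dominate the integrand by a fixed $L^{d}$ function) one gets $\log\alpha_{s,t}=\int_{s}^{t}\gamma_{\xi}\,d\xi$, where $\gamma_{\xi}:=\angle\lim_{w\to\infty}\widetilde{G}(w,\xi)/w=-G'(\tau,\xi)\in[0,+\infty)$ is minus the angular derivative of the generator $G(\cdot,\xi)$ at $\tau$, and $\gamma_{\xi}\le2|p(0,\xi)|\in L^{d}_{\mathrm{loc}}$ by the Herglotz representation of $p(\cdot,\xi)$. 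Hence $\lambda'=\gamma\in L^{d}_{\mathrm{loc}}$, and $\lambda=\Re\lambda$ is non-decreasing and nonnegative; unwinding the Cayley transform (using $\widetilde G(w,\xi)=2p(T_{\tau}^{-1}(w),\xi)$ and $\Re T_{\tau}(z)=\frac{1-|z|^{2}}{|\tau-z|^{2}}$) rewrites $\gamma_{\xi}$ as the displayed angular-limit expression for $\lambda'$, giving the boundary formula.

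Finally, uniqueness: if $\lambda,\mu$ both satisfy the conclusions then $(\lambda(s)-\lambda(t))-(\mu(s)-\mu(t))\in2\pi i\,\mathbb Z$ for all $s\le t$; fixing $s$, this is a continuous $2\pi i\mathbb Z$-valued function of $t$ vanishing at $t=s$, hence identically $0$, so $\lambda-\mu$ is constant and $=0$ by $\lambda(0)=\mu(0)=0$. I expect the main obstacle to be entirely in the boundary case: one must genuinely establish that each $\varphi_{s,t}$ fixes $\tau$ with finite positive angular derivative (the horocycle/Denjoy-Wolff analysis above), and then make rigorous the passage of the limit $w\to\infty$ through the $t$-integral defining the multiplier --- the required uniform-in-$w$, locally $L^{1}$-in-$t$ domination, resting on $\Re g_{s,\xi}(x)\ge x$ together with the distortion estimate, is the technical heart; identifying $\gamma_{\xi}$ with the stated angular limit is then routine bookkeeping with the Cayley transform.
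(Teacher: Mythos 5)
Your interior case is correct and takes a genuinely different (and clean) route from the paper: instead of differentiating the Loewner equation at the fixed point, the paper first establishes the explicit formula $\varphi_{s,t}(z)=z\exp\bigl(-\int_{s}^{t}p(\varphi_{s,\xi}(z),\xi)\,d\xi\bigr)$ (for $\tau=0$) and then lets $z\to0$ using $|p(z,\xi)-p(0,\xi)|\le\frac{2|z|}{1-|z|}|p(0,\xi)|$ and dominated convergence; your Cauchy-integral/Fubini justification of $\frac{\partial}{\partial t}\varphi_{s,t}'(\tau)=G'(\tau,t)\varphi_{s,t}'(\tau)$, together with $\varphi_{s,t}(\tau)=\tau$ from uniqueness of Carath\'eodory solutions, is sound, and the uniqueness argument for $\lambda$ is fine. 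The skeleton of your boundary case (Cayley transform, horocycle invariance, Denjoy--Wolff point at $\infty$, Julia--Wolff--Carath\'eodory, dominated convergence) is the same as the paper's, but the computational device differs: the paper works multiplicatively with real parts, writing $\Re \phi_{s,t}(n)=n\exp\bigl(\int_{s}^{t}\frac{\Re P(\phi_{s,\xi}(n),\xi)}{\Re \phi_{s,\xi}(n)}d\xi\bigr)$ and squeezing with $\widehat{\lambda}(\xi)=\inf_{w}\frac{\Re P(w,\xi)}{\Re w}$, whereas you integrate the full complex ODE additively along the positive real axis.

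The gap sits exactly where you locate the technical heart. First, the ingredients you list do not yield the domination: $\Re g_{s,\xi}(x)\ge x$ together with $|p(z,\xi)|\le\frac{1+|z|}{1-|z|}|p(0,\xi)|$ only give $\frac{|\widetilde{G}(g_{s,\xi}(x),\xi)|}{x}\le C\,\frac{(|g_{s,\xi}(x)|+1)^{2}}{x\,\Re g_{s,\xi}(x)}\,|p(0,\xi)|$, and the factor $|g_{s,\xi}(x)|/x$ is controlled neither by $\Re g_{s,\xi}(x)\ge x$ (the imaginary part is unconstrained; think of maps close to $w\mapsto w+iM$ with $M$ huge, where the ratio blows up for moderate $x$) nor by the bound $\alpha_{s,\xi}\le\alpha_{0,T}$, which only concerns the limit $x\to\infty$ for each fixed $\xi$. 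What is needed is that the points $g_{s,\xi}(x)$ stay in a Stolz angle at $\infty$ uniformly in $\xi\in[s,t]$ and $x\ge1$; this can be obtained from $\Re\bigl(g_{s,\xi}(w)-\alpha_{s,\xi}w\bigr)\ge0$ plus the Carath\'eodory-class growth estimate and the compactness of $\{g_{s,\xi}(1):\xi\in[s,t]\}$ (Lemma \ref{EF-acotacion}), but it is an additional argument, and the same uniform Stolz control is also required before you may evaluate the angular limit $\gamma_{\xi}$ along the path $x\mapsto g_{s,\xi}(x)$. Second, once the limit does pass, the integrand tends to $\gamma_{\xi}\,\alpha_{s,\xi}$, not to $\gamma_{\xi}$, so you arrive at $\alpha_{s,t}=1+\int_{s}^{t}\gamma_{\xi}\,\alpha_{s,\xi}\,d\xi$ and must still solve this linear integral equation (absolute continuity of $t\mapsto\alpha_{s,t}$ plus a Gronwall-type integration) before reaching $\log\alpha_{s,t}=\int_{s}^{t}\gamma_{\xi}\,d\xi$; your text jumps directly to the logarithmic formula. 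Both defects are repairable, but as written they are genuine omissions; the paper's decision to work with $\Re P/\Re\phi$ and the infimum characterization of the angular derivative is precisely what makes its lower bound trivial and its domination one-sided and easy.
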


\begin{proof}
\textit{Case  $\tau\in\mathbb{D}$}. Firstly, we assume that
$\tau=0.$ We write $\lambda(t)=\int_{0}^{t}p(0,\xi)d\xi$ for
all $t\geq0$. Fixed $s<t$ we have only to prove that
$\varphi_{s,t}^{\prime}(0)=\exp(\lambda(s)-\lambda (t)).$

\textbf{Claim 1. }\textit{For all $z\in\mathbb{D}$ and for all
$0\leq s<t,$ the function  $\lbrack s,t]\ni\xi\mapsto
p(\varphi_{s,\xi}(z),\xi)$ belongs to
$L^{d}([s,t],\mathbb{C}).$ Moreover,}
\begin{equation}\label{formav}
\varphi_{s,t}(z)=z\exp\left(
-\int_{s}^{t}p(\varphi_{s,\xi}(z),\xi )d\xi\right) .
\end{equation}

Assuming the claim,  since $\v'_{s,t}(0)=\lim_{z\to
0}\frac{\varphi_{s,t}(z)}{z}$, by \eqref{formav} we are left to
prove  that
\begin{equation}\label{clancy}
\lim_{z\rightarrow0}\int_{s}^{t}p(\varphi_{s,\xi}(z),\xi)d\xi=\int_{s}%
^{t}p(0,\xi)d\xi.
\end{equation}
Fix $\xi$. If $\Re p(z,\xi)=0$ for some $z\in \D$ then
$p(\cdot,\xi)\equiv i a_\xi$ for some $a_\xi\in \R$. If $\Re
p(\cdot,\xi)>0$, the holomorphic map $\D\ni
z\mapsto\frac{p(z,\xi)-p(0,\xi)}{p(z,\xi)+\overline{p(0,\xi)}}
$ sends the unit disc  into itself and fixes the point zero.
Then
\begin{align*}
\left\vert p(z,\xi)-p(0,\xi)\right\vert  & \leq|z|\left\vert p(z,\xi
)+\overline{p(0,\xi)}\right\vert \leq|z||p(z,\xi)|+|z||p(0,\xi)|\\
& \leq|z|\frac{1+|z|}{1-|z|}|p(0,\xi)|+|z||p(0,\xi)|=\frac{2\left\vert
z\right\vert }{1-\left\vert z\right\vert }\left\vert p(0,\xi)\right\vert ,
\end{align*}
where in the last inequality we have used \cite[pages 39-40]{Pommerenke}.
Therefore,
\[
\left\vert p(\varphi_{s,\xi}(z),\xi)-p(0,\xi)\right\vert \leq\frac{2\left\vert
\varphi_{s,\xi}(z)\right\vert }{1-\left\vert \varphi_{s,\xi}(z)\right\vert
}\left\vert p(0,\xi)\right\vert \leq\frac{2\left\vert z\right\vert
}{1-\left\vert z\right\vert }\left\vert p(0,\xi)\right\vert .
\]
Since the function $\lbrack s,t]\ni \xi\mapsto p(0,\xi)$
belongs to $L^{d}([s,t],\mathbb{C}),$ we have
\[
\left\vert \int_{s}^{t}p(\varphi_{s,\xi}(z),\xi)d\xi-\int_{s}^{t}p(0,\xi
)d\xi\right\vert \leq\frac{2\left\vert z\right\vert }{1-\left\vert
z\right\vert }\int_{s}^{t}\left\vert p(0,\xi)\right\vert d\xi
\]
and \eqref{clancy} follows from the dominated convergence
theorem.

In case $\tau\in\D\setminus\{0\}$, we conjugate $(\v_{s,t})$
with the automorphism
$\psi:=\frac{\tau-z}{1-\overline{\tau}z}$. The evolution family
$(\psi_{s,t})$ (defined by $\psi_{s,t}:=\psi\circ \v_{s,t}\circ
\psi$) has Berkson-Porta data $((1-|\tau|^2)p(\psi(z),t),0)$.
Since $\varphi_{s,t}^{\prime}(\tau)=\psi_{s,t}^{\prime}(0)$,
the result follows from the previous case.

\textit{Case } $\tau\in\partial\mathbb{D}.$ Conjugating with
the Cayley transform $T_\tau:z\mapsto\frac{\tau+z}{\tau-z}$, we
define a  family $\phi_{s,t}:=T_{\tau}\circ\varphi_{s,t}\circ
T_{\tau}^{-1}$ of holomorphic self maps of $\Ha:=\{w\in \C: \Re
w>0\}$. For $w\in \Ha$ and $t\in[0,+\infty)$ we let
$P(w,t):=2p(T_{\tau }^{-1}(w),t)$.

\textbf{Claim 2. }\textit{For all $w\in\mathbb{H}$ and for all
$0\leq s<t$ the function $\lbrack s,t]\ni \xi \mapsto\frac{\Re
P(\phi_{s,\xi}(w),\xi )}{\Re \phi_{s,\xi}(w)} \in
L^{d}([s,t],\mathbb{C}).$ Moreover,
\begin{equation}\label{claimRe}
\Re \phi_{s,t}(w)=\Re w\exp\left(  \int_{s}^{t}%
\frac{\Re P(\phi_{s,\xi}(w),\xi)}{\Re \phi_{s,\xi
}(w)}d\xi\right)  \text{ \quad for all }w\in\mathbb{H}
\end{equation}
and $\infty$ is the Denjoy-Wolff point of $\phi_{s,t}.$}

We  assume that $\Re p(w,\xi)>0$ for all $w$ and for all
$\xi\geq0$ (leaving to the reader the obvious modifications in
case $\Re p(\cdot,\xi)\equiv a_\xi i$, $a_\xi\in \R$ for some
$\xi$). By the Julia-Wolff-Carath\'{e}odory theorem (see, {\sl
e.g.}, \cite{Abate}), the number
\[
\widehat{\lambda}(\xi):=\inf\left\{ \frac{\Re P(w,\xi )}{\Re
w}:w\in\mathbb{H}\right\} =\angle\lim_{w\rightarrow
\infty}\frac{P(w,\xi)}{w}=\angle\lim_{w\rightarrow\infty}\frac
{\Re P(w,\xi)}{\Re w}%
\]
is well-defined for all $\xi$. Moreover, the function
$\xi\in\lbrack0,+\infty)\mapsto\widehat{\lambda}(\xi)$ is
measurable since
$\widehat{\lambda}(\xi)=\lim_{n\rightarrow\infty}
\frac{P(n,\xi)}{n}$ and $\xi\mapsto P(w,\xi)$ is measurable for
all $w\in \Ha$. In addition, since $\lbrack0,+\infty)\ni \xi
\mapsto \Re P(1,\xi)$ belongs to $L_{loc}^{d}([0,+\infty))$ and
$0\leq\widehat{\lambda}(\xi)\leq\Re P(1,\xi)$, we conclude that
the function $\lbrack0,+\infty)\ni\xi\mapsto\widehat{\lambda
}(\xi)$ also belongs to $L_{loc}^{d}([0,+\infty))$.

By \eqref{claimRe}
\begin{equation*}
\varphi_{s,t}^{\prime}(\tau)^{-1}   =\lim_{n\rightarrow+\infty}
\frac{\Re \phi_{s,t}(n)}{n} =\exp\left(  \lim_{n\rightarrow+\infty}\int_{s}^{t}\frac{\Re %
P(\phi_{s,\xi}(n),\xi)}{\Re \phi_{s,\xi}(n)}d\xi\right).
\end{equation*}
Now, for all fixed $n\in \N$
\[
\int_{s}^{t}\widehat{\lambda}(\xi)d\xi=\int_{s}^{t}\inf\left\{
\frac {\Re P(w,\xi)}{\Re w}:w\in\mathbb{H}\right\}
d\xi\leq\int_{s}^{t}\frac{\Re P(\phi_{s,\xi}(n),\xi )}{\Re
\phi_{s,\xi}(n)}d\xi.
\]
Thus, $\int_{s}^{t}\widehat{\lambda}(\xi)d\xi\leq\lim_{n\rightarrow+\infty
}\int_{s}^{t}\frac{\Re P(\phi_{s,\xi}(n),\xi)}{\Re %
\phi_{s,\xi}(n)}d\xi.$ Moreover, since $\infty$ is the
Denjoy-Wolff point of the function $\phi_{s,t}$ we have that
$\Re \phi_{s,t}(n)\geq n$ and, by \cite[(3.2)]{Pomm79} and
\cite[pages 39-40]{Pommerenke} we have
\[
\frac{\Re P(\phi_{s,\xi}(n),\xi)}{\Re \phi_{s,\xi
}(n)}\leq\frac{\Re P(n,\xi)}{n}\leq 4\Re P(1,\xi).
\]
Then, the sequence of measurable functions $\lbrack
s,t]\ni\xi\mapsto \frac{\Re P(n,\xi)}{n}$ is uniformly bounded
by a $L_{loc}^{d}([0,+\infty))$-function and converges
pointwise to $\widehat{\lambda}.$ Thus, the dominated
convergence theorem shows that
\[
\lim_{n}\int_{s}^{t}\frac{\Re P(\phi_{s,\xi}(n),\xi
)}{\Re \phi_{s,\xi}(n)}d\xi\leq\lim_{n}\int_{s}^{t}%
\frac{\Re P(n,\xi)}{n}d\xi=\int_{s}^{t}\left(  \lim_{n}%
\frac{\Re P(n,\xi)}{n}\right)
d\xi=\int_{s}^{t}\widehat{\lambda }(\xi)d\xi.
\]
Summing up, we have  $\varphi_{s,t}^{\prime}(\tau)=\exp\left(
-\int _{s}^{t}\widehat{\lambda}(\xi)d\xi\right)$ as wanted.

Now, we are left to prove Claim 1 and Claim 2.

\textbf{Proof of Claim 1.} Fix $z$ and $0\leq s<t<+\infty.$
Since the function $\xi\mapsto p(z,\xi)$ is measurable and
$\xi\mapsto\varphi_{s,\xi}(z)$ is continuous by Proposition
\ref{EF-continuidad}, it follows that the function $\lbrack
s,t]\ni\xi\mapsto p(\varphi_{s,\xi}(z),\xi)$ is measurable.
Moreover, for all $\xi,$ by \cite[pages 39-40]{Pommerenke}, we
have
\[
|p(\varphi_{s,\xi}(z),\xi)|\leq\frac{1+|\varphi_{s,\xi}(z)|}{1-|\varphi
_{s,\xi}(z)|}|p(0,\xi)|\leq\frac{2}{1-|z|}|p(0,\xi)|.
\]
Therefore, the map $\lbrack s,t]\ni\xi\mapsto
p(\varphi_{s,\xi}(z),\xi)\in L^{d}([s,t],\mathbb{C})$. Hence,
the function $\phi(u):=z\exp\left(
-\int_{s}^{u}p(\varphi_{s,\xi}(z),\xi)d\xi\right)  $ is
absolutely continuous in $[s,t]$ and
$\phi^{\prime}(u)=-\phi(u)p(\varphi_{s,u}(z),u)$. Assume that
$z\neq0.$ By \eqref{diffGev} and \eqref{Herglotz-vf}, recalling
that $\tau\equiv 0$, it follows
$\frac{\partial\varphi_{s,u}(z)}{\partial
u}=-\varphi_{s,u}(z)p(\varphi_{s,u}(z),u)$ almost everywhere,
thus
\[
\frac{\partial}{\partial u}\left(
\frac{\phi(u)}{\varphi_{s,u}(z)}\right)  \equiv 0
\]
for almost every $u\in\lbrack s,t]$ (notice that since
$\varphi_{s,u}$ is univalent then $\varphi_{s,u}^{-1}(0)=0$ and
the above quotient is well-defined for $z\neq 0$). Therefore,
there exists $c$ such that $\varphi_{s,u}(z)=c\phi(u)$ for all
$u.$ But, $\varphi_{s,s}(z)=z$ and $\phi(s)=z.$ Hence, $c=1$
and the claim is proved.

\textbf{Proof of Claim 2.} A direct computation shows that
equation \eqref{diffGev} translates to $\Ha$ in
\begin{equation}\label{diffP}
\frac{\partial\phi_{s,t}(w)}{\partial t}=P(\phi_{s,t}(w),t),
\end{equation}
which holds for almost every $t$ and every $w\in \Ha$.

Fix $w$ and $0\leq s<t<+\infty.$ Since the function $\xi\mapsto
P(w,\xi)$ is measurable and $\xi\mapsto\phi_{s,\xi}(z)$ is
continuous by Proposition \ref{EF-continuidad}, the function
$\lbrack s,t]\ni\xi\mapsto P(\phi_{s,\xi}(z),\xi)$ is
measurable. Moreover by the distortion theorem for
Carath\'eodory functions (see \cite{Pomm79}), for each
$\xi\in\lbrack s,t],$
\begin{equation}\label{distorP}
\frac{\Re P(\phi_{s,\xi}(w),\xi)}{\Re \phi_{s,\xi
}(w)}\leq\frac{|\phi_{s,\xi}(w)+1|^{2}}{(\Re \phi_{s,\xi}
(w))^{2}}\Re P(1,\xi).
\end{equation}
Fix a compact set $K$ in $\mathbb{H}.$ By Lemma
\ref{EF-acotacion}, the set $\{\phi_{s,\xi}(w):w\in
K,\xi\in\lbrack s,t]\}$ is  compact in $\mathbb{H}.$ Since the
function $\lbrack s,t]\ni\xi\mapsto \Re P(1,\xi) \in
L^{d}([s,t],\mathbb{C})$, equation \eqref{distorP} shows that
\[
\lbrack s,t]\ni\xi\mapsto\frac{\Re P(\phi_{s,\xi}(w),\xi )}{\Re
\phi_{s,\xi}(w)}
\]
belongs to $L^{d}([s,t],\mathbb{C}).$

Hence, the function $\phi(u):=\Re w\exp\left(  \int_{s}^{u}
\frac{\Re P(\phi_{s,\xi}(w),\xi)}{\Re \phi_{s,\xi
}(w)}d\xi\right)  $ is absolutely continuous in $[s,t]$ and
$\phi^{\prime}(u)=\phi(u)\frac{\Re P(\phi_{s,u}(w),u)} {\Re
\phi_{s,u}(w)}$ for almost every $u\in [s,t]$. By
\eqref{distorP}
\[
\frac{\partial}{\partial u}\left(  \frac{\phi(u)}{\Re \phi
_{s,u}(w)}\right)  \equiv 0
\]
for almost every $u\in\lbrack s,t]$. That is there exists $c$ such that $\Re %
\phi_{s,u}(w)=c\phi(u)$ for all $u.$ But, $\Re \phi
_{s,s}(w)=\Re w=\phi(s).$ That is, $c=1$ and the claim is
proved.
\end{proof}

\begin{corollary}
\label{DWpoint} Let $(\varphi_{s,t})$ be an evolution family of
the unit disc with Berkson-Porta data $(p,\tau)$. Suppose
$\tau$ is constant.  Then for all $0\leq s\leq t<+\infty$
either $\tau$ is the Denjoy-Wolff point of $(\v_{s,t})$ or
$\varphi_{s,t}=\id$.
\end{corollary}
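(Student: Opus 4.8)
The plan is to split the argument into the two cases $\tau\in\D$ and $\tau\in\partial\D$, which behave quite differently. Throughout, let $G$ be the Herglotz vector field associated with $(\v_{s,t})$ (it exists by Theorem~\ref{EF-implica-VF}); since its Berkson-Porta data is $(p,\tau)$ with $\tau$ constant, $G(z,u)=(z-\tau)(\overline{\tau}z-1)p(z,u)$ for a.e.\ $u\ge0$ and all $z\in\D$, and fix $0\le s\le t<+\infty$. When $\tau\in\D$ the argument is short: the factor $(z-\tau)$ vanishes at $z=\tau$ and $p(\tau,\cdot)$ is finite a.e., so $G(\tau,u)=0$ for a.e.\ $u$; hence the constant function $u\mapsto\tau$ solves $\overset{\bullet}{x}=G(x,u)$, $x(s)=\tau$, and by uniqueness of positive trajectories it is the trajectory through $(\tau,s)$, whence $\v_{s,t}(\tau)=\tau$. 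Thus $\v_{s,t}$ is a holomorphic self-map of $\D$ fixing the interior point $\tau$, and if $\v_{s,t}\ne\id$ then by the Schwarz-Pick lemma $\tau$ is its unique fixed point in $\D$, i.e.\ its Denjoy-Wolff point.

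For $\tau\in\partial\D$ I would pass to the right half-plane by the Cayley map $T_\tau(z)=\frac{\tau+z}{\tau-z}$, setting $\phi_{s,u}:=T_\tau\circ\v_{s,u}\circ T_\tau^{-1}$ (holomorphic self-maps of $\Ha$) and $P(w,u):=2p(T_\tau^{-1}(w),u)$, so that $\Re P(\cdot,u)\ge0$ on $\Ha$. By the computation carried out in the proof of Theorem~\ref{Continuidad-absoluta-multiplicador} (equation~\eqref{diffP}) one has $\frac{\partial\phi_{s,u}(w)}{\partial u}=P(\phi_{s,u}(w),u)$ for a.e.\ $u$ and every $w\in\Ha$, while $u\mapsto\phi_{s,u}(w)$ is locally absolutely continuous (by Proposition~\ref{EF-continuidadabsoltuta} together with Lemma~\ref{EF-acotacion}, which keeps $\v_{s,u}(T_\tau^{-1}(w))$ in a compact subset of $\D$ for $u$ in a bounded interval). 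Taking real parts and integrating then gives the key inequality
\[
\Re\phi_{s,t}(w)-\Re w=\int_s^t\Re P(\phi_{s,u}(w),u)\,du\ge0\qquad(w\in\Ha).
\]

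To finish the boundary case, consider the non-negative harmonic function $h(w):=\Re\phi_{s,t}(w)-\Re w$ on $\Ha$. If $h$ vanishes at some point of $\Ha$, the minimum principle for harmonic functions forces $h\equiv0$, so $\phi_{s,t}(w)-w$ is holomorphic with constant real part and hence equals a constant $ic$, $c\in\R$; then either $c=0$ and $\v_{s,t}=\id$, or $c\ne0$ and the iterates satisfy $\phi_{s,t}^{\circ n}(w)=w+nic\to\infty$, so $\infty$ is the Denjoy-Wolff point of $\phi_{s,t}$. If instead $h>0$ everywhere, then $\phi_{s,t}$ has no fixed point in $\Ha$ and therefore (Denjoy-Wolff theorem) a boundary Denjoy-Wolff point $q$ with $\phi_{s,t}^{\circ n}(w)\to q$ for all $w$; iterating the key inequality gives $\Re\phi_{s,t}^{\circ n}(w)\ge\Re w>0$ for every $n$, which excludes any finite $q\in i\R$, so $q=\infty$. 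In all cases $\phi_{s,t}=\id$ or $\infty$ is the Denjoy-Wolff point of $\phi_{s,t}$; transporting back by $T_\tau$, which preserves Denjoy-Wolff points and sends $\infty$ to $\tau$, yields the corollary.

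The substantive ingredient is the monotonicity $\Re\phi_{s,t}(w)\ge\Re w$, which is exactly the half-plane reflection of $\Re P\ge0$; everything after that is standard Denjoy-Wolff theory. The only delicate bookkeeping is to carry the degenerate alternative $\v_{s,t}=\id$ through every branch, and to make sure $u\mapsto\Re\phi_{s,u}(w)$ is absolutely continuous so the fundamental theorem of calculus applies — which, as noted, follows from Proposition~\ref{EF-continuidadabsoltuta} and Lemma~\ref{EF-acotacion}. (One could instead read off ``$\infty$ is the Denjoy-Wolff point of $\phi_{s,t}$'' from Claim~2 in the proof of Theorem~\ref{Continuidad-absoluta-multiplicador}, but that argument presupposes $\v_{s,t}\ne\id$ and leaves a sub-case to the reader, so the self-contained route above is preferable.)
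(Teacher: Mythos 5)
Your proof is correct. It rests on the same two pillars as the paper's argument: in the interior case, that $\tau$ is a common fixed point of the family, and in the boundary case, the half-plane inequality $\Re\phi_{s,t}(w)\ge\Re w$ which is the reflection of $\Re p\ge 0$. The paper, however, disposes of the corollary in one line by citing Claim 1 and Claim 2 from the proof of Theorem \ref{Continuidad-absoluta-multiplicador}: Claim 1 gives the multiplicative formula $\varphi_{s,t}(z)=z\exp\left(-\int_s^t p(\varphi_{s,\xi}(z),\xi)\,d\xi\right)$ (after normalizing $\tau=0$), from which $\varphi_{s,t}(0)=0$ is immediate, while Claim 2 contains, as part of its statement, the assertion that $\infty$ is the Denjoy-Wolff point of $\phi_{s,t}$ (its written proof only establishes the exponential formula for $\Re\phi_{s,t}(w)$, leaving the Denjoy--Wolff identification implicit). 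You instead obtain the interior fixed point from uniqueness of Carath\'eodory solutions (since $G(\tau,\cdot)=0$ a.e.), derive the boundary inequality additively by integrating \eqref{diffP} (justifying absolute continuity via Proposition \ref{EF-continuidadabsoltuta} and Lemma \ref{EF-acotacion}, which is the right justification), and then complete the identification of the Denjoy-Wolff point yourself via the minimum principle applied to the nonnegative harmonic function $\Re(\phi_{s,t}(w)-w)$ together with iteration of the inequality, carefully separating the degenerate alternative $\varphi_{s,t}=\id$ and the translation case $\phi_{s,t}(w)=w+ic$, which the paper's one-line citation glosses over. What your route buys is self-containedness and an explicit treatment of the identity/automorphism sub-cases; what the paper's route buys is brevity, since the exponential formulas of Claims 1 and 2 had already been established for Theorem \ref{Continuidad-absoluta-multiplicador}.
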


\begin{proof}
It follows at once from Claim 1 and Claim 2 in the proof of
Theorem \ref{Continuidad-absoluta-multiplicador}.
\end{proof}

Our next result shows that a nice behavior of the derivative at
the common fixed point $\tau$ allows us to replace the
topological property EF3 in the definition of evolution family
by a much weaker hypothesis. In order to understand the
naturality of our hypothesis on the first derivative at the
Denjoy-Wolff point, we remark that if $(\v_{s,t})$ is an
evolution family on the unit disc with common Denjoy-Wolff
point $\tau\in \D$, then by univalence, $\v'_{0,t}(\tau)\neq 0$
for all $t\geq 0$. If $\tau\in\de\D$ then the same (as angular
limit) is true by the classical Julia lemma (see, {\sl e.g.},
\cite{Abate}).

\begin{theorem}
\label{EFwithcommonDW} Let $(\varphi_{s,t})$ be a family of
holomorphic self-maps of the unit disc  having a common
Denjoy-Wolff point $\tau \in\overline{\mathbb{D}}$. Assume that
$(\varphi_{s,t})$ satisfies EF1 and EF2 and  $\varphi
_{0,t}^{\prime}(\tau)\neq0$ for all $t\geq 0$. Then the
following are equivalent:

\begin{enumerate}
\item $(\varphi_{s,t})$ is an evolution family of order $d\geq 1$.

\item The following properties are satisfied:

\begin{enumerate}
\item[2.1] the map $\lbrack0,+\infty)\ni t\mapsto\mu(t):=\varphi_{0,t}^{\prime
}(\tau)$ is absolutely continuous and $\mu'\in
L_{loc}^{d}([0,+\infty),\mathbb{R})$,

\item[2.2] If $\tau\in\de \D$, there exists a point $z_{0}\in\mathbb{D}$ such that for all $T>0$
there exists a non-negative function $k_{z_{0},T}\in
L^{d}([0,T],\mathbb{R})$ such that
\[
|\varphi_{s,u}(z_{0})-\varphi_{s,t}(z_{0})|\leq\int_{u}^{t}k_{z_{0},T}%
(\xi)d\xi
\]
for all $0\leq s\leq u\leq t\leq T.$
\end{enumerate}
\end{enumerate}
\end{theorem}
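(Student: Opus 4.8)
I would prove the two implications separately, and inside the converse implication I would split the two cases $\tau\in\D$ and $\tau\in\partial\D$, since only the second one really uses condition~(2.2).

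\emph{From (1) to (2).} Assume $(\varphi_{s,t})$ is an $L^d$-evolution family. Then EF3 holds at every point, so (2.2) is automatic (take any $z_0\in\D$). For (2.1) I would first produce, via Theorem~\ref{EF-implica-VF} and Theorem~\ref{Berkson-Porta-for-Herglotz}, a Herglotz vector field $G$ with Berkson--Porta data $(p,\tau(\cdot))$ having $(\varphi_{s,t})$ as positive trajectories. Since every non-identity $\varphi_{s,t}$ has Denjoy--Wolff point $\tau$, the theorem preceding Section~7 (which identifies $\tau_s$ as a limit of Denjoy--Wolff points of maps $\varphi_{s,t_n(s)}\neq\id$) forces $\tau(s)=\tau$ for almost every $s$; modifying $\tau(\cdot)$ on the set where $G(\cdot,s)\equiv0$ (allowed by essential uniqueness), I may take $\tau(\cdot)\equiv\tau$ constant. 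Theorem~\ref{Continuidad-absoluta-multiplicador} then applies and gives a locally absolutely continuous $\lambda$ with $\lambda'\in L^d_{\mathrm{loc}}$, $\lambda(0)=0$, such that $\varphi_{0,t}'(\tau)=e^{-\lambda(t)}$; hence $\mu(t)=\varphi_{0,t}'(\tau)$ is locally absolutely continuous with $\mu'=-\lambda'e^{-\lambda}\in L^d_{\mathrm{loc}}$, which is (2.1).

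\emph{From (2) to (1), case $\tau\in\D$.} Only EF3 needs checking. Conjugating by a disc automorphism carrying $\tau$ to $0$ (which preserves EF1--EF2, the hypotheses, and EF3), I may assume every $\varphi_{s,t}$ fixes $0$, so $\varphi_{s,t}'(0)=\mu(t)/\mu(s)$ by EF2, with $\mu(0)=1$. By the Schwarz lemma $|\varphi_{s,u}(z)|\le|z|$ and $|\mu|$ is non-increasing; since $\mu(T)\neq0$ by hypothesis, $|\mu(u)|\ge|\mu(T)|>0$ on $[0,T]$. Writing $\varphi_{u,t}(\zeta)=\zeta\,h_{u,t}(\zeta)$ with $h_{u,t}\colon\D\to\oD$, $h_{u,t}(0)=\mu(t)/\mu(u)$, a short Schwarz--Pick computation gives $|h_{u,t}(\zeta)-1|\le\frac{1+|\zeta|}{1-|\zeta|}|1-h_{u,t}(0)|$. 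Taking $\zeta=w:=\varphi_{s,u}(z)$ and using $\varphi_{s,t}(z)=\varphi_{u,t}(w)$,
\[
|\varphi_{s,u}(z)-\varphi_{s,t}(z)|=|w|\,|h_{u,t}(w)-1|\le\frac{|z|(1+|z|)}{(1-|z|)\,|\mu(T)|}\,|\mu(u)-\mu(t)|,
\]
and since $|\mu(u)-\mu(t)|\le\int_u^t|\mu'(\xi)|\,d\xi$, EF3 holds with $k_{z,T}(\xi)=\frac{|z|(1+|z|)}{(1-|z|)|\mu(T)|}|\mu'(\xi)|\in L^d([0,T],\R)$.

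\emph{From (2) to (1), case $\tau\in\partial\D$.} Fix $z\in\D$ and $0\le s\le u\le t\le T$, set $w=\varphi_{s,u}(z)$, so $\varphi_{s,u}(z)-\varphi_{s,t}(z)=w-\varphi_{u,t}(w)$. By \cite[Corollary 3.3.1]{Shoikhet}, $\varphi_{u,t}-\id\in\mathrm{Gen}(\D)$, and since $\varphi_{u,t}$ has Denjoy--Wolff point $\tau$, so does the generated semigroup; hence the Berkson--Porta formula reads $\varphi_{u,t}(\zeta)-\zeta=\overline{\tau}(\zeta-\tau)^2q_{u,t}(\zeta)$ with $\Re q_{u,t}\ge0$, so $|\varphi_{u,t}(\zeta)-\zeta|=|\tau-\zeta|^2|q_{u,t}(\zeta)|$. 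Since $\varphi_{s,u}'(\tau)\le1$, Julia's inequality shows that $\varphi_{s,u}$ maps the closed horocycle $\overline{E(\tau,R_z)}$, $E(\tau,R)=\{\zeta\in\D:|\tau-\zeta|^2<R(1-|\zeta|^2)\}$ and $R_z=\frac{|\tau-z|^2}{1-|z|^2}$, into itself; in particular $w\in\overline{E(\tau,R_z)}$, whence $\frac{|\tau-w|^2}{1-|w|}\le 2R_z$. Applying the distortion inequality for Carath\'eodory functions at the point $z_0$ of (2.2) — together with $1-\big|\tfrac{w-z_0}{1-\overline{z_0}w}\big|\ge c(z_0)(1-|w|^2)$ for some $c(z_0)>0$ — gives $|q_{u,t}(w)|\le\frac{C(z_0)}{1-|w|}|q_{u,t}(z_0)|$, and $|q_{u,t}(z_0)|=|z_0-\tau|^{-2}|\varphi_{u,t}(z_0)-z_0|$. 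Combining and then invoking (2.2) with both first arguments equal to $u$,
\[
|\varphi_{s,u}(z)-\varphi_{s,t}(z)|=|\tau-w|^2|q_{u,t}(w)|\le\frac{C(z_0)}{|z_0-\tau|^2}\cdot\frac{|\tau-w|^2}{1-|w|}\cdot\int_u^t k_{z_0,T}(\xi)\,d\xi\le\frac{2R_zC(z_0)}{|z_0-\tau|^2}\int_u^t k_{z_0,T}(\xi)\,d\xi,
\]
so EF3 holds with $k_{z,T}=\frac{2R_zC(z_0)}{|z_0-\tau|^2}k_{z_0,T}\in L^d([0,T],\R)$; the degenerate subcase $\Re q_{u,t}\equiv0$ (so $q_{u,t}$ constant) is handled directly using $|\tau-w|^2\le4$.

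\emph{Main obstacle.} The delicate point is precisely this boundary case: the Carath\'eodory factor $q_{u,t}$ in the Berkson--Porta decomposition of $\varphi_{u,t}-\id$ may blow up as its argument tends to $\tau$, and the trajectory $u\mapsto\varphi_{s,u}(z)$ may genuinely tend to $\tau$. The whole point is that the product $|\tau-w|^2|q_{u,t}(w)|=|\varphi_{u,t}(w)-w|$ stays controlled because Julia's lemma confines $w=\varphi_{s,u}(z)$ to a fixed horocycle at $\tau$, on which $\frac{|\tau-w|^2}{1-|w|}$ is bounded; this is what allows one to transport the single-point information of (2.2) to all of $\D$. The $\tau\in\D$ case, by contrast, is soft, resting only on Schwarz's lemma and the non-vanishing hypothesis on $\varphi_{0,t}'(\tau)$.
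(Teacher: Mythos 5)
Your proposal is correct, and while your forward implication and your interior case $\tau\in\D$ essentially reproduce the paper's argument (Schwarz lemma plus the Carath\'eodory two-point estimate applied to $1-h_{u,t}$; the paper merely writes $\mu=e^{a+ib}$ to treat modulus and rotation separately, while you absorb the rotation into $h_{u,t}(0)=\mu(t)/\mu(u)$), your boundary case follows a genuinely different route. The paper passes to the half-plane by the Cayley map, uses Julia's inequality in the form $\Re\bigl[\phi_{s,t}(w)-\tfrac{\mu(s)}{\mu(t)}w\bigr]\ge0$, a Carath\'eodory bound of $\phi_{s,t}(w)-\tfrac{\mu(s)}{\mu(t)}w$ against its value at $1$, and compactness of the orbits $\{\phi_{s,t}(w)\}$ (obtained from 2.2 together with hyperbolic contraction), so its final majorant combines $\int_u^t|\mu'|\,d\xi$ with $\int_u^t k_{z_0,T}\,d\xi$ and thus uses both 2.1 and 2.2. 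You instead stay in the disc: you write $\varphi_{u,t}-\id=\overline{\tau}\,(\cdot-\tau)^2q_{u,t}$ with $\Re q_{u,t}\ge0$ — the identification of the Berkson--Porta point of the generator $\varphi_{u,t}-\id$ with the Denjoy--Wolff point of $\varphi_{u,t}$ is precisely the fact the paper itself invokes from \cite{Contreras-Diaz-Pommerenke:Scand} in the last theorem of Section 6, and deserves that citation — then confine $w=\varphi_{s,u}(z)$ to the horocycle $\overline{E(\tau,R_z)}$ by Julia's lemma and control $q_{u,t}(w)$ by $q_{u,t}(z_0)$ via the two-point distortion estimate, so that only 2.2 enters. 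What your approach buys: it shows that for $\tau\in\partial\D$ hypothesis 2.1 is in fact a consequence of EF1, EF2, the common Denjoy--Wolff point and 2.2 (it then follows from the already established implication $(1)\Rightarrow(2.1)$), a slightly sharper statement than the theorem; what the paper's approach buys is that it works directly in the half-plane normalization exploited in Section 8 and avoids the boundary-critical-point characterization of generators. Finally, in the direction $(1)\Rightarrow(2)$ you make explicit a step the paper leaves implicit before quoting Theorem \ref{Continuidad-absoluta-multiplicador}, namely that the Berkson--Porta datum $\tau(\cdot)$ of the associated Herglotz vector field may be taken constantly equal to $\tau$, almost everywhere and after modification on the set where $G(\cdot,s)\equiv0$; your justification via the Denjoy--Wolff-limit theorem at the end of Section 6 and essential uniqueness is the intended one.
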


\begin{proof}
By Theorem \ref{Continuidad-absoluta-multiplicador} and the
very definition of evolution family,  (1) implies (2).

Conversely, suppose (2) is satisfied. Again we have to split up
the inner and boundary cases.

Firstly, suppose that $\tau\in\mathbb{D}$. Up to conjugation,
we may assume  $\tau=0.$ Fix $0<T<+\infty.$ Since $\mu(t)\neq0$
for all $t$, there exist two absolutely continuous functions
$a,b:[0,+\infty )\rightarrow\mathbb{R}$ such that
$\mu(t)=e^{a(t)+ib(t)}$ for all $t$ and $a',b'\in
L_{loc}^{d}([0,+\infty),\mathbb{R})$.

By the chain rule for derivatives, $\varphi
_{s,t}^{\prime}(\tau)=\mu(t)/\mu(s)$ for all $0\leq s\leq
t<+\infty$ and since
$|\varphi_{s,t}^{\prime}(0)|=|\mu(t)/\mu(s)|=e^{a(t)-a(s)}\leq1,$
the map $a$ is decreasing.

Let $h_{s,t}(z):=\dfrac{\varphi_{s,t} (z)}{e^{i(b(t)-b(s))}z}$
for $z\in\mathbb{D}\setminus\{0\}$ and $h_{s,t}
(0):=e^{a(t)-a(s)}$ for all $0\leq s\leq t$. The map $h_{s,t}$
is holomorphic and $\Re (1-h_{s,t})\geq0.$ Therefore, by
\cite[pages 39-40]{Pommerenke},
\begin{align*}
|1-h_{s,t}(z)|  & \leq\frac{1+|z|}{1-|z|}|1-h_{s,t}(0)|=\frac{1+|z|}%
{1-|z|}(1-e^{a(t)-a(s)})\\
& =\frac{1+|z|}{1-|z|}e^{-a(s)}(e^{a(s)}-e^{a(t)})\leq\frac{1+|z|}%
{1-|z|}e^{-a(T)}(e^{a(s)}-e^{a(t)})
\end{align*}
whenever $z\in\mathbb{D}$ and $0\leq s\leq t\leq T.$

Now, fix $z\in\mathbb{D}$ and $0\leq s\leq u\leq t\leq T.$ On the one hand, if
$\varphi_{s,u}(z)=0,$ then
\[
|\varphi_{s,u}(z)-\varphi_{s,t}(z)|=|\varphi_{u,t}(\varphi_{s,u}%
(z))|=|\varphi_{u,t}(0)|=0.
\]
On the other hand, if $w:=\varphi_{s,u}(z)\neq0,$ then
\begin{align*}
|\varphi_{s,u}(z)-\varphi_{s,t}(z)|  & =|w-\varphi_{u,t}(w)|\leq\left\vert
1-\frac{\varphi_{u,t}(w)}{w}\right\vert \\
& \leq\left\vert 1-h_{u,t}(w)\right\vert +\left\vert \frac{\varphi_{u,t}%
(w)}{w}\right\vert \left\vert \dfrac{1}{e^{i(b(t)-b(u))}}-1\right\vert \\
& \leq\frac{1+|w|}{1-|w|}e^{-a(T)}(e^{a(u)}-e^{a(t)})+\left\vert
e^{ib(t)}-e^{ib(u)}\right\vert \\
& \leq\frac{1+|z|}{1-|z|}e^{-a(T)}(e^{a(u)}-e^{a(t)})+\left\vert
e^{ib(t)}-e^{ib(u)}\right\vert .
\end{align*}
In any case, we have that
\[
|\varphi_{s,u}(z)-\varphi_{s,t}(z)|\leq\frac{1+|z|}{1-|z|}e^{-a(T)}\left(
(e^{a(u)}-e^{a(t)})+\left\vert e^{ib(t)}-e^{ib(u)}\right\vert \right)  .
\]
 Then, the function
\[
k_{z,T}(\xi):=\frac{1+|z|}{1-|z|}e^{-a(T)}\frac{d}{d\xi}\left(
e^{a(\xi )}+\left\vert e^{ib(\xi)}\right\vert \right)
\]
belongs to $L^{d}([0,T],\mathbb{C}))$ and
\[
|\varphi_{s,u}(z)-\varphi_{s,t}(z)|\leq\int_{u}^{t}k_{z,T}(\xi)d\xi,
\]
proving the result in this case.

Next, suppose that $\tau\in\partial\mathbb{D}$. Again by the
chain rule for angular derivatives, $\varphi
_{s,t}^{\prime}(\tau)=\mu(t)/\mu(s)$ for all $0\leq s\leq
t<+\infty$ and, since $\varphi _{s,t}^{\prime}(\tau)\in(0,1]$,
it follows that $0<\mu (t)\leq\mu(s)\leq1$ for all $0\leq s\leq
t.$ Without loss of generality we assume $z_{0}=0.$ Again, we
move to the right half-plane by means of the Cayley transform
$T_{\tau}$ given by $T_{\tau} (z)=\frac{\tau+z}{\tau-z}$ and we
 let $(\phi_{s,t})$ be the family of holomorphic  self-maps
of the right half-plane given by
$\phi_{s,t}:=T_{\tau}\circ\varphi_{s,t}\circ T_{\tau}^{-1}$ for
all $0\leq s\leq t<+\infty.$ The Denjoy-Wolff point of
$\phi_{s,t}$ is $\infty$ with multiplier
$1/\varphi_{s,t}^{\prime} (\tau)=\mu(s)/\mu(t)$. Thus the
function $\Re[\phi_{s,t}(w)-\frac{\mu (s)}{\mu(t)}w]\geq0$ for
all $w\in \Ha$. Then, by \cite[pages 39-40]{Pommerenke},
\[
\left\vert \phi_{s,t}(w)-\frac{\mu(s)}{\mu(t)}w\right\vert \leq\frac
{|w+1|+|w-1|}{|w+1|-|w-1|}\left\vert \phi_{s,t}(1)-\frac{\mu(s)}{\mu
(t)}\right\vert .
\]
Therefore,
\begin{equation*}
|w-\phi_{s,t}(w)|   \leq\left\vert
1-\frac{\mu(s)}{\mu(t)}\right\vert |w|+\frac{|w+1|+|w-1|}
{|w+1|-|w-1|}\left\vert
\phi_{s,t}(1)-\frac{\mu(s)}{\mu(t)}\right\vert .
\end{equation*}
Fix $0<T<+\infty.$ By hypothesis and arguing as in the proofs
of Proposition \ref{EF-continuidad} and Lemma
\ref{EF-acotacion}, there is a number $R$ such that
$|\varphi_{s,t}(0)|\leq R$ for all $0\leq s\leq t\leq T$ and
then the set $\{\phi_{s,t}(1):$ $0\leq s\leq t\leq T\}$ is a
compact subset of the right half-plane.

Now fix $w\in\mathbb{H}$. If $0\leq s\leq t\leq T,$ then
$\rho_{\mathbb{H}}\left(  \phi_{s,t}(w),\phi_{s,t}(1)\right)
\leq \rho_{\mathbb{H}}\left(  w,1\right)$. Hence  there is a
compact set $K$ in $\mathbb{H}$ such that $\phi_{s,t}(w)\in K $
for all $0\leq s\leq t\leq T.$ Therefore there exists $M>0$
such that
\[
\max_{0\leq s\leq t\leq T}\left\{  |\phi_{s,t}(w)|,\frac{|\phi_{s,t}%
(w)+1|+|\phi_{s,t}(w)-1|}{|\phi_{s,t}(w)+1|-|\phi_{s,t}(w)-1|}\right\}  \leq
M.
\]

Fix $0\leq s\leq u\leq t\leq T.$ Write $v=\phi_{s,u}(w).$ Then
\begin{align*}
|\phi_{s,u}(w)-\phi_{s,t}(w)|  & =|v-\phi_{u,t}(v)|\leq\left\vert 1-\frac
{\mu(u)}{\mu(t)}\right\vert |v|+\frac{|v+1|+|v-1|}{|v+1|-|v-1|}\left\vert
\phi_{u,t}(1)-\frac{\mu(u)}{\mu(t)}\right\vert \\
& \leq M\left(  2\left\vert 1-\frac{\mu(u)}{\mu(t)}\right\vert +\left\vert
\phi_{u,t}(1)-1\right\vert \right)  \\
& \leq M\left(  \frac{2}{\mu(T)}\left\vert \mu(t)-\mu(u)\right\vert
+\frac{\left\vert \varphi_{u,t}(0)\right\vert }{1-\left\vert \varphi
_{u,t}(0)\right\vert }\right)  \\
& \leq M\left(  \frac{2}{\mu(T)}\left\vert \mu(t)-\mu(u)\right\vert +\frac
{1}{1-R}\left\vert \varphi_{u,t}(0)\right\vert \right)  .
\end{align*}
From this inequality, one can easily finish the proof arguing
as in the previous case.
\end{proof}

\begin{remark}
We point out that hypothesis 2.2 is not needed in case $\tau\in
\D$. While, in case $\tau\in\de \D$ hypothesis 2.2 in Theorem
\ref{EFwithcommonDW} cannot be removed. Indeed, the family
$\varphi_{s,t}(z)=T^{-1}\left( T(z)+ic(s)-ic(t)\right) $ where
$c$ is a continuous function from $[0,+\infty)$ into
$\mathbb{R}$ which is not absolutely continuous and
$T(z)=\frac{1+z}{1-z}$ satisfies EF1, EF2 and 2.1 (being
$\mu\equiv 1$) but it is not an evolution family.
\end{remark}

Classically, evolution families that comes out from Loewner
types equations are those $(\varphi_{s,t})$ with a common fixed
point $0$ and such that $\varphi_{0,t}^{\prime }(0)=e^{-t}$
(see, {\sl e.g.}, \cite{Loewner}, \cite{Pommerenke-65},
\cite{Pommerenke}, and \cite{Rosenblum-Rovnyak}). The above
result shows why it is not necessary to assume EF3 in this
classical case: it follows automatically from the normalization
hypothesis on the first derivative at~$0$.

We end up this section with a technical result which better
relates the classical definition of evolution family with the
definition introduced in this paper.

\begin{proposition}
Let $(\varphi_{s,t})$ be  an evolution family on the unit disc.
Then for all $r<1$ and for all $T<+\infty,$ the set of
functions $\{[0,T]\ni
t\mapsto\varphi_{0,t}(z)\in\mathbb{D}:|z|\leq r\}$ is uniformly
absolutely continuous.

Conversely, assume $(\varphi_{s,t})$ is a family of holomorphic
self-maps of the unit disc which satisfies EF1 and EF2 and has
a common Denjoy-Wolff point $\tau\in\mathbb{D}$. Assume
moreover that $\varphi_{0,t}^{\prime}(\tau)\neq0$ for all
$t\geq 0$. If for all $r<1$ and for all $T<+\infty,$ the set of
functions $\{[0,T]\ni
t\mapsto\varphi_{0,t}(z)\in\mathbb{D}:|z|\leq r\}$ is uniformly
absolutely continuous then $(\varphi_{s,t})$ is an evolution
family.
\end{proposition}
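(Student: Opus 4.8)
The plan for the first (direct) assertion is to pass through the vector field. By Theorem~\ref{EF-implica-VF} the evolution family $(\varphi_{s,t})$ (of some order $d\in[1,+\infty]$) is the family of positive trajectories of a Herglotz vector field $G$; in particular $G$ is a weak holomorphic vector field, so for every compact $K\subset\D$ and every $T>0$ property WHVF3 yields a non-negative $k_{K,T}\in L^{d}([0,T],\R)\subset L^{1}([0,T],\R)$ bounding $|G(\cdot,t)|$ on $K$. Fixing $r<1$ and $T<+\infty$, I would first apply Lemma~\ref{EF-acotacion} to get $R=R(r,T)<1$ with $|\varphi_{0,t}(z)|\le R$ for all $t\in[0,T]$ and $|z|\le r$, and then take $K:=\overline{\D(0,R)}$. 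Since $t\mapsto\varphi_{0,t}(z)$ is the positive trajectory of $G$ through $(z,0)$, it is locally absolutely continuous, $\overset{\bullet}{\varphi}_{0,t}(z)=G(\varphi_{0,t}(z),t)$ a.e., and $\varphi_{0,t}(z)\in K$; hence for $0\le a\le b\le T$,
\[
|\varphi_{0,b}(z)-\varphi_{0,a}(z)|=\Big|\int_{a}^{b}G(\varphi_{0,\xi}(z),\xi)\,d\xi\Big|\le\int_{a}^{b}k_{K,T}(\xi)\,d\xi .
\]
For any finite collection of pairwise disjoint intervals $(a_i,b_i)\subset[0,T]$ this gives $\sum_i|\varphi_{0,b_i}(z)-\varphi_{0,a_i}(z)|\le\int_{\bigcup_i(a_i,b_i)}k_{K,T}$, a bound independent of $z$; the uniform absolute continuity then follows at once from the absolute continuity of the indefinite integral of the single $L^1$ function $k_{K,T}$.

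For the converse the plan is to reduce everything to Theorem~\ref{EFwithcommonDW}. Since the common Denjoy--Wolff point $\tau$ lies in $\D$, hypothesis (2.2) of that theorem is vacuous, so it is enough to verify (2.1) with $d=1$, i.e.\ that $\mu(t):=\varphi_{0,t}'(\tau)$ is locally absolutely continuous with $\mu'\in L^{1}_{\mathrm{loc}}([0,+\infty))$; this will produce an evolution family (of order $1$). To prove this, fix $T<+\infty$ and pick $r$ with $|\tau|<r<1$. By the Cauchy integral formula for the first derivative, for $0\le a\le b\le T$,
\[
\mu(b)-\mu(a)=\frac{1}{2\pi i}\int_{|\xi|=r}\frac{\varphi_{0,b}(\xi)-\varphi_{0,a}(\xi)}{(\xi-\tau)^{2}}\,d\xi ,
\]
so $|\mu(b)-\mu(a)|\le\frac{r}{(r-|\tau|)^{2}}\max_{|\xi|=r}|\varphi_{0,b}(\xi)-\varphi_{0,a}(\xi)|$. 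Given pairwise disjoint $(a_i,b_i)\subset[0,T]$, I would sum this inequality, interchange $\sum_i$ with the contour integral, and invoke the uniform absolute continuity hypothesis at level $r$: for $\epsilon>0$ there is $\delta>0$ such that $\sum_i|\varphi_{0,b_i}(\xi)-\varphi_{0,a_i}(\xi)|<\epsilon$ for every $|\xi|\le r$ whenever $\sum_i(b_i-a_i)<\delta$, whence $\sum_i|\mu(b_i)-\mu(a_i)|\le r\epsilon/(r-|\tau|)^{2}$. Thus $\mu$ is absolutely continuous on $[0,T]$ for every $T$, so $\mu'\in L^{1}_{\mathrm{loc}}$, condition (2.1) holds, and Theorem~\ref{EFwithcommonDW} gives that $(\varphi_{s,t})$ is an evolution family.

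The delicate point --- the step I expect to be the real obstacle in the converse --- is that one must genuinely exploit the \emph{uniformity} in $z$ of the absolute continuity, not merely absolute continuity of each individual curve $t\mapsto\varphi_{0,t}(z)$: it is precisely the uniform smallness of $\sum_i|\varphi_{0,b_i}(\xi)-\varphi_{0,a_i}(\xi)|$ over the whole circle $|\xi|=r$ that licenses pulling $\sum_i$ inside the contour integral. This is also what forces the use of Theorem~\ref{EFwithcommonDW} rather than a direct construction of the dominating $L^1$ function of EF3; and the assumption $\tau\in\D$ is essential, since for $\tau\in\partial\D$ the (non-automatic) hypothesis (2.2) of Theorem~\ref{EFwithcommonDW} would be needed as well.
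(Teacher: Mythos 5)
Your proposal is correct and follows essentially the same route as the paper: the direct implication is obtained exactly as in the paper by combining Lemma \ref{EF-acotacion}, the Herglotz vector field given by Theorem \ref{EF-implica-VF}, and the $L^1$ bound from WHVF3, while the converse verifies absolute continuity of $t\mapsto\varphi_{0,t}'(\tau)$ via the Cauchy formula and then invokes Theorem \ref{EFwithcommonDW}, which is precisely the paper's (briefly sketched) argument. Your version merely spells out the Cauchy-formula estimate for general $\tau\in\mathbb{D}$ instead of normalizing $\tau=0$, and correctly identifies that the uniformity in $z$ is what makes that estimate work.
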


\begin{proof}
 By Lemma \ref{EF-acotacion}, there exists
$R>0$ such that $|\varphi_{s,t}(z)|\leq R$ for all $0\leq s\leq
t\leq T$ and $|z|\leq r$.  Let $G(z,t)$ be the Herglotz vector
field which solves \eqref{main-eq}, and let $k_{R,T}\in
L^{1}([0,T],\mathbb{R})$ be the function given by WHVF3. Then
\begin{align*}
\sup_{|z|\leq r}%
{\displaystyle\sum\limits_{k=1}^{n}} \left\vert
\varphi_{0,b_{k}}(z)-\varphi_{0,a_{k}}(z)\right\vert &
=\sup_{|z|\leq r}%
{\displaystyle\sum\limits_{k=1}^{n}} \left\vert
\int_{a_{k}}^{b_{k}}\frac{\partial\varphi_{0,\xi}(z)}{\partial\xi
}d\xi\right\vert \\
& \leq\sup_{|z|\leq r}%
{\displaystyle\sum\limits_{k=1}^{n}}
\int_{a_{k}}^{b_{k}}\left\vert G(\v_{0,\xi}(z),\xi)\right\vert d\xi\\
& \leq%
{\displaystyle\sum\limits_{k=1}^{n}}
\int_{a_{k}}^{b_{k}}k_{R,T}(\xi)d\xi.
\end{align*}

Conversely, assuming $\tau=0$, it is not  difficult to see that
the absolutely continuity and Cauchy formula imply that
 $\lbrack0,+\infty)\ni t\mapsto \varphi_{0,t}^{\prime}
(\tau)$ is absolutely continuous. Hence the result follows from
Theorem \ref{EFwithcommonDW}.
\end{proof}

\section{Evolution families with a common boundary fixed point}

In this section, we concentrate in the study of evolution
families $(\varphi_{s,t})$ with Denjoy-Wolff points a constant
$\tau\in
\partial\mathbb{D}$.

As we have remarked in the introduction, this case is much more
complicated and apart from a couple of papers due to Goryainov
and Ba \cite{Gorjainov} ,\cite{Goryainov-Ba}, there are no
references till the end of the nineties where a series of paper
of G.F. Lawler, O. Schramm,  W. Werner and Bauer appeared
\cite{Schramm}, \cite{Lawler-Schramm-Werner-I},
\cite{Lawler-Schramm-Werner-II}, \cite{Bauer}.

As usual, when the Denjoy-Wolff point is at the boundary, it is
better to translate to the right half-plane. Let $\Ha:=\{w\in
\C: \Re w>0\}$ be the right half-plane. As a matter of
notation, we say that a family $(\phi_{s,t})$ of holomorphic
self-maps of $\Ha$ is an {\sl evolution family of order $d\geq
1$} if there exists a biholomorphic map $T:\Ha\to \D$ such that
$(T\circ \phi_{s,t}\circ T^{-1})$ is an evolution family of
order $d$ in $\D$. Similar definition are given for Herglotz
vector fields and Herglotz functions.

Translating Theorems \ref{EF-implica-VF} and
\ref{Herglotz-implica-EF} to the right half-plane, we can state
the following result

\begin{theorem}\label{semipiano}
Let $(\phi_{s,t})$ be a family of holomorphic self-maps of the
right half-plane $\Ha$.  Then $(\phi_{s,t})$ is an evolution
family of order $d\geq 1$ in the right half-plane with $\infty$
as common boundary fixed point if and only if there exists a
Herglotz function $P(w,t)$ of order $d$ in the right half-plane
such that, given $s\geq0$, there exists a set $M=M(s)\subset
\lbrack s,+\infty)$ (not depending on $w$) of zero measure such
that, for every $t\in(s,+\infty)\setminus M$ and every
$w\in\mathbb{H}$, it holds that
\[
\frac{\partial\phi_{s,t}(w)}{\partial t}=P(\phi_{s,t}(w),t).
\]

Let $P$ be a Herglotz function of order $d\geq 1$ in the right
half-plane. For all $s\geq0$ and $w\in\mathbb{H},$ let
$\psi_{s,w}$ be the solution of the problem
\[
\left\{
\begin{array}
[c]{l}%
\overset{\bullet}{w}(t)=P(w(t),t)\quad \text{ for a. e. }t\in\lbrack s,+\infty)\\
w(s)=w.
\end{array}
\right.  \text{ }%
\]
Then defining $\phi_{s,t}(w):=\psi_{s,w}(t)$ for all $0\leq
s\leq t<+\infty$ and for all $w\in\mathbb{H},$ the family
$(\phi_{s,t})$ is an evolution family of order $d$ in the right
half-plane with $\infty$ as common boundary fixed point.
\end{theorem}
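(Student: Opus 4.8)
The plan is to reduce the statement to the results already proved in the unit disc by conjugating with a Cayley transform; the only real content is to check that the relevant objects transform correctly. I would fix $\tau\in\partial\mathbb{D}$ and let $T_{\tau}(z)=\frac{\tau+z}{\tau-z}$ be the Cayley transform carrying $\mathbb{D}$ biholomorphically onto $\mathbb{H}$ with $T_{\tau}(\tau)=\infty$. Since conjugation by a fixed automorphism of $\mathbb{D}$ preserves evolution families, Herglotz vector fields and Herglotz functions together with their orders (for EF3 this uses Lemma \ref{EF-acotacion} and the Lipschitz character of automorphisms on compact subsets of $\mathbb{D}$), it is harmless to use $T_{\tau}^{-1}$ as \emph{the} biholomorphism $\mathbb{H}\to\mathbb{D}$ in all the half-plane definitions. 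The computation carried out in the proof of Claim 2 of Theorem \ref{Continuidad-absoluta-multiplicador} shows that, for a Herglotz function $p$ of order $d$ on $\mathbb{D}$, the $T_{\tau}$-push-forward of the vector field $z\mapsto (z-\tau)(\overline{\tau}z-1)p(z,t)$ is $w\mapsto P(w,t):=2p(T_{\tau}^{-1}(w),t)$, \emph{with no quadratic factor}; as $\Re P=2\Re p\geq 0$, $P$ is holomorphic in $w$, and (by Theorem \ref{Herglotz-implica-VF}, which gives $p(0,\cdot)\in L^{d}_{loc}$, together with the distortion theorem for Carath\'eodory functions) $P$ is locally $L^{d}$ in $t$ on compacta of $\mathbb{H}$, so $P$ is precisely a Herglotz function of order $d$ in $\mathbb{H}$; conversely every such $P$ arises from $p(z,t):=\tfrac12 P(T_{\tau}(z),t)$.

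For the ``only if'' part, given an evolution family $(\phi_{s,t})$ of order $d$ in $\mathbb{H}$ with $\infty$ as common (hence Denjoy--Wolff) boundary fixed point, I would set $\varphi_{s,t}:=T_{\tau}^{-1}\circ\phi_{s,t}\circ T_{\tau}$, an evolution family of order $d$ in $\mathbb{D}$ with common Denjoy--Wolff point $\tau$, and invoke Theorem \ref{EF-implica-VF} and part (2) of Theorem \ref{Derivacion independiente de z (en la t)} to obtain a Herglotz vector field $G$ of order $d$ on $\mathbb{D}$ with positive trajectories $(\varphi_{s,t})$ and with $\partial\varphi_{s,t}(z)/\partial t=G(\varphi_{s,t}(z),t)$ for $t$ outside a $z$-independent null set $M(s)$. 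The essential point is that the Berkson--Porta data $(p_{s},\tau_{s})$ of $G$ can be chosen with $\tau_{s}\equiv\tau$: for a.e.\ $s$ with $G(\cdot,s)\not\equiv 0$ the limit formula \eqref{Glimit} and the continuity of $BP_{\tau}$ (Proposition \ref{BP-continuidad}) give $\tau_{s}=\lim_{k}BP_{\tau}(g_{k,s})$ with $g_{k,s}=n_{k}(s)(\varphi_{s,s+1/n_{k}(s)}-\id)\in\mathrm{Gen}(\mathbb{D})$, and the Julia--Wolff--Carath\'eodory argument used in the final theorem of Section 6 shows that $BP_{\tau}(g_{k,s})$ is the Denjoy--Wolff point of $\varphi_{s,s+1/n_{k}(s)}$, hence equals $\tau$ whenever that map is not the identity; on the complementary (null) set we set $\tau_{s}:=\tau$. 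Writing then $G(z,s)=(z-\tau)(\overline{\tau}z-1)p(z,s)$ with $p$ a Herglotz function of order $d$ (Theorem \ref{Herglotz-implica-VF}), the chain rule and the first paragraph give, for every $w\in\mathbb{H}$ and $t\in(s,+\infty)\setminus M(s)$,
\[
\frac{\partial\phi_{s,t}(w)}{\partial t}=T_{\tau}'\big(\varphi_{s,t}(T_{\tau}^{-1}(w))\big)\,G\big(\varphi_{s,t}(T_{\tau}^{-1}(w)),t\big)=P(\phi_{s,t}(w),t),
\]
with $P(w,t)=2p(T_{\tau}^{-1}(w),t)$ a Herglotz function of order $d$ in $\mathbb{H}$.

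For the converse and the constructive assertion, I would start from a Herglotz function $P$ of order $d$ in $\mathbb{H}$, set $p(z,t):=\tfrac12 P(T_{\tau}(z),t)$, form the Herglotz vector field $G(z,t):=(z-\tau)(\overline{\tau}z-1)p(z,t)$ of order $d$ on $\mathbb{D}$ (Theorem \ref{Herglotz-implica-VF}), and let $(\varphi_{s,t})$ be its flow, which by Theorem \ref{Herglotz-implica-EF} is an evolution family of order $d$ in $\mathbb{D}$, with common Denjoy--Wolff point $\tau$ (unless $\varphi_{s,t}=\id$) by Corollary \ref{DWpoint}; then $\phi_{s,t}:=T_{\tau}\circ\varphi_{s,t}\circ T_{\tau}^{-1}$ is, by definition, an evolution family of order $d$ in $\mathbb{H}$ with common boundary fixed point $T_{\tau}(\tau)=\infty$. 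The chain rule together with part (2) of Theorem \ref{Derivacion independiente de z (en la t)} yields $\partial\phi_{s,t}(w)/\partial t=P(\phi_{s,t}(w),t)$ for $t$ outside a $w$-independent null set; moreover $w\mapsto P(w,t)$ is the $T_{\tau}$-push-forward of the semicomplete field $G$ (Theorem \ref{semicompletezza}), hence a semicomplete weak holomorphic vector field on $\mathbb{H}$ whose trajectory through $(w,s)$ is $t\mapsto T_{\tau}(\varphi_{s,t}(T_{\tau}^{-1}(w)))=\phi_{s,t}(w)$; so by uniqueness of solutions of ODEs the solution $\psi_{s,w}$ of the stated Cauchy problem satisfies $\psi_{s,w}(t)=\phi_{s,t}(w)$, proving the constructive assertion, and the ``if'' direction of the first assertion then follows from it together with the same uniqueness.

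The step I expect to be the main obstacle is the claim, in the ``only if'' part, that the Berkson--Porta point $\tau_{s}$ of the Herglotz vector field attached to $(\varphi_{s,t})$ is constantly $\tau$; everything else is routine transfer of structure under a conformal change of variable. This is exactly where one must use that $\infty$ (equivalently $\tau$) is the common \emph{Denjoy--Wolff} point and not merely a common boundary fixed point --- a common repulsive boundary fixed point would push forward to a vector field on $\mathbb{H}$ carrying a genuine quadratic factor, which is not a Herglotz function. Establishing $\tau_{s}=\lim_{k}BP_{\tau}(g_{k,s})=\tau$ rests on the approximation \eqref{Glimit}, the continuity of $BP_{\tau}$ from Proposition \ref{BP-continuidad}, and the identification, via the Julia--Wolff--Carath\'eodory theorem, of $BP_{\tau}$ on a generator of the form $c(\varphi-\id)$ with the Denjoy--Wolff point of $\varphi$, as done in the final theorem of Section 6.
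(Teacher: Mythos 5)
Your proposal is correct and takes essentially the same route as the paper, which in fact gives no separate proof of Theorem \ref{semipiano}: it is presented as the translation, via the Cayley transform, of the disc results, and your write-up is precisely that translation with the details supplied. The one genuinely non-routine point, that the Berkson--Porta point of the Herglotz vector field attached to $(\varphi_{s,t})=(T_{\tau}^{-1}\circ\phi_{s,t}\circ T_{\tau})$ is constantly $\tau$, you settle with exactly the tools the paper has set up (\eqref{Glimit}, Proposition \ref{BP-continuidad}, and the last theorem of Section 6), and the push-forward computation $P(w,t)=2p(T_{\tau}^{-1}(w),t)$ is the same one appearing in Claim 2 of Theorem \ref{Continuidad-absoluta-multiplicador}.

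Concerning the worry you raise at the end: your instinct is right, and the Denjoy--Wolff hypothesis is not dispensable. Read literally, with ``common boundary fixed point'', the ``only if'' direction is false: the family $\phi_{s,t}(w)=e^{-(t-s)}w$ consists of automorphisms of $\Ha$ fixing $\infty$, is an evolution family of order $\infty$, but its vector field is $P(w,t)=-w$, whose real part is negative on $\Ha$; by (the half-plane transfer of) Corollary \ref{uniqueHerglotz} no Herglotz function can satisfy the stated equation for this family. So the intended hypothesis is the one announced at the start of Section 8 (constant Denjoy--Wolff point at the boundary, here $\infty$), which is what your argument for $\tau_{s}\equiv\tau$ uses and which, in the converse direction, holds automatically by Corollary \ref{DWpoint}. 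A last, minor point, inherited from the paper's own formulation rather than a defect of your argument: to get the ``if'' direction of the first assertion from the constructive part via uniqueness of solutions one also needs $\phi_{s,s}=\id$ and local absolute continuity of $t\mapsto\phi_{s,t}(w)$, since the almost-everywhere differential equation alone does not identify $\phi_{s,t}(w)$ with $\psi_{s,w}(t)$; it would be worth saying this explicitly.
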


From the very beginning of this century, there have been many
authors interested on a very particular case of Herglotz
functions in the right half-plane (see, \cite{Schramm},
\cite{Marshall-Rohde}, \cite{Prokhorov-Vasiliev}). Namely, let
$h:[0,+\infty)\rightarrow i\mathbb{R}$ be a measurable function
(in fact, those papers always assume that $h$ is continuous).
Then $P(w,t)=\frac{1}{w+h(t)}$ is clearly a Herglotz function
in $\mathbb{H}$ of order $\infty$ since $|P(w,t)|\leq\frac
{1}{\Re w}$ for all $w\in\mathbb{H}$ and $t\geq0.$ Moreover, it
is clear that
$\angle\lim\limits_{w\rightarrow\infty}\frac{P(w,\xi)}{w}=0$
for all $\xi.$ Therefore, by Theorem \ref{semipiano}, if
$(\phi_{s,t})$ is the evolution family in the right half-plane
with Herglotz function $P,$ then all the functions $\phi_{s,t}$
are parabolic, that is,
$\angle\lim\limits_{w\rightarrow\infty}\frac{\phi_{s,t}(w)}{w}=1$.
By Claim 2 in the proof of Theorem
\ref{Continuidad-absoluta-multiplicador}, we have that for all
$w\in\mathbb{H}$\ and for all $0\leq s<t,$\ the function
\[
\lbrack s,t]\ni\xi\mapsto\frac{P(\phi_{s,\xi}(w),\xi)}{\phi_{s,\xi}(w)}%
\]
belongs to $L^{\infty}([s,t],\mathbb{C})$ and\textit{\ }%
\[
\phi_{s,t}(w)=w\exp\left(  \int_{s}^{t}\frac{P(\phi_{s,\xi}(w),\xi)}%
{\phi_{s,\xi}(w)}d\xi\right)  \text{ \quad for all }w\in\mathbb{H}.
\]
Now, write $k(w)=%
{\displaystyle\int_{s}^{t}}
\frac{P(\phi_{s,\xi}(w),\xi)}{\phi_{s,\xi}(w)}d\xi$ for all $w\in\mathbb{H}.$
Bearing in mind that $\phi_{s,\xi}$ is a parabolic function with $\infty$ as
Denjoy-Wolff point, if $w\in\mathbb{H}\cap\mathbb{R}$, then
\[
|\phi_{s,\xi}(w)|\geq\Re \phi_{s,\xi}(w)\geq w.
\]
Therefore,
\[
\left\vert w^{2}\frac{P(\phi_{s,\xi}(w),\xi)}{\phi_{s,\xi}(w)}\right\vert
\leq1
\]
for all $\xi$ and $\lim\limits_{w\in\mathbb{R},w\rightarrow+\infty}w^{2}%
\frac{P(\phi_{s,\xi}(w),\xi)}{\phi_{s,\xi}(w)}=1.$ Then, by the
dominated converge theorem, we have
\[
t-s=\lim\limits_{w\in\mathbb{R},w\rightarrow+\infty}w^{2}k(w)=\lim
\limits_{w\in\mathbb{R},w\rightarrow+\infty}w^{2}\left(  e^{k(w)}-1\right)
=\lim\limits_{w\in\mathbb{R},w\rightarrow+\infty}w(\phi_{s,t}(w)-w).
\]
Thus, by Lehto-Virtanen theorem, we obtain%
\[
\angle\lim_{w\rightarrow\infty}(\phi_{s,t}(w)-w)=0\quad\text{and}\quad
\angle\lim_{w\rightarrow\infty}w(\phi_{s,t}(w)-w)=t-s.
\]
That is,
\[
\phi_{s,t}(w)=w+\frac{t-s}{w}+\gamma_{s,t}(w)
\]
where $\angle\lim_{w\rightarrow\infty}w\gamma_{s,t}(w)=0$ and
the functions of the evolution family satisfies the so called
hydrodynamic normalization. Following the terminology
introduced by the last two authors and Pommerenke in
\cite{Contreras-Diaz-Pommerenke:Derivada-segunda}, this means
that if $(\varphi_{s,t})$ is the corresponding evolution family
in the unit disc  with fixed point $\tau,$ then there exist the
second and third angular derivatives
of $\varphi_{s,t}$ at $\tau$ and, in fact, $\varphi_{s,t}^{\prime\prime}%
(\tau)=0$ and $\varphi_{s,t}^{\prime\prime\prime}(\tau)=\frac{3}%
{2}(s-t)\overline{\tau}^{2}.$

\end{document}